\newtheorem{theorem}{Theorem}
\newtheorem{proposition}[theorem]{Proposition}
\newtheorem{claim}[theorem]{Claim}
\theoremstyle{definition}
\theoremstyle{remark}
\newtheorem{remark}[theorem]{Remark}
\newcommand{\ZZ}{\mathbb{Z}}
\newcommand{\CC}{\mathbb{C}}
\newcommand{\PP}{\mathbb{P}}
\newcommand{\HH}{\mathbb{H}}
\newcommand{\dd}{\mathrm{d}}
\newcommand{\ii}{\mathrm{i}}
\newcommand{\ee}{\mathrm{e}}
\newcommand{\del}[1]{\frac{\partial}{\partial #1}}
\newcommand{\indel}[1]{\partial/\partial #1}
\begin{document}
\title[Special Quotient Foliations and Chazy's Equations]{Birational Geometry of Special Quotient Foliations and Chazy's Equations}

\author{Adolfo Guillot}
\address{Instituto de Matem\'aticas, Universidad Nacional Aut\'onoma de M\'exico, Ciudad Universitaria 04510,
Ciudad de M\'exico, Mexico}
\email{adolfo.guillot@im.unam.mx}

\author{Luís Gustavo Mendes}
\address{Universidade Federal do Rio Grande do Sul, UFRGS, Brazil}
\email{gustavo.mendes@ufrgs.br}

\subjclass{32M25, 34M55, 14E07}
\keywords{Singular holomorphic foliation, Chazy equations, Birational transformation}

\thanks{Published as: \textsc{Guillot, A.} and \textsc{Mendes, L. G.} Birational Geometry of Special Quotient Foliations and Chazy's Equations, \emph{Bull. Sci. Math.} \textbf{209} (2026), art. no. 103792,   \href{https://doi.org/10.1016/j.bulsci.2025.103792}{doi:10.1016/j.bulsci.2025.103792}}

\thanks{\ccby\, CC BY 4.0. This work is licensed under a \href{https://creativecommons.org/licenses/by/4.0/deed.en}{Creative Commons Attribution 4.0 License}}

\begin{abstract} The works of Brunella and Santos have singled out three special singular holomorphic foliations on projective surfaces having invariant rational nodal curves of positive self-intersection. These foliations can be described as quotients of foliations on some rational surfaces under cyclic groups of transformations of orders three, four, and six, respectively. Through an unexpected connection with the reduced Chazy IV, V and VI equations, we give explicit models for these foliations as degree-two foliations on the projective plane (in particular, we recover Pereira's model of Brunella's foliation). We describe the full groups of birational automorphisms of these quotient foliations, and, through this, produce symmetries for the reduced Chazy IV and V equations. We give another model for Brunella's very special foliation, one with only non-degenerate singularities, for which its characterizing involution is a quartic de Jonqui\`eres one, and for which its order-three symmetries are linear. Lastly, our analysis of the action of monomial transformations on linear foliations poses naturally the question of determining planar models for their quotients under the action of the standard quadratic Cremona involution; we give explicit formulas for these as well.\end{abstract}

\maketitle

\setcounter{tocdepth}{1}
\tableofcontents
 
\clearpage 

\section{Introduction and results} \label{intro}
Let $\mathcal{F}$ be a singular holomorphic foliation (with finite singular set) on a smooth projective complex surface $M$. Following \cite{Santos}, a \emph{link} for $\mathcal{F}$ is an invariant one-nodal rational curve $C$ in $M$, with $C^2 >0$, such that its node is the unique singularity of $\mathcal{F}$ along $C$, and is of reduced non-degenerate type (see Section \ref{background}). By the results of Brunella \cite{BrMinimal} (or \cite{BGF}) and Santos~\cite{Santos}, there are exactly three possibilities (details will be given in Section~\ref{F3F4F6}):
\begin{itemize}
\item $C^2 =3$, and \(\mathcal{F}\) is birationally equivalent to the quotient of a particular linear foliation on the complex projective plane $\PP^2$ by a biholomorphism of order three, \emph{Brunella's very special foliation} \(\mathcal{F}_3\); \item $C^2 = 2$, and \(\mathcal{F}\) is birationally equivalent to the quotient of a particular linear foliation of $ {\mathbb P}^1 \times {\mathbb P}^1$ by a biholomorphism of order four, \emph{Santos's foliation} \(\mathcal{F}_4\); or \item $C^2 =1$, and \(\mathcal{F}\) is birationally equivalent to the quotient of a particular foliation of the blown up complex projective plane in three non-collinear points by a biholomorphism of order six, \emph{Santos's foliation} \(\mathcal{F}_6\). 
\end{itemize}

These foliations are defined on finite quotients of rational surfaces, and thus on rational surfaces themselves, and are birationally equivalent to foliations on \(\PP^2\). Brunella considered that ``it would be nice to obtain an explicit and simple equation, of lowest degree, for a projective model [of \(\mathcal{F}_3\)]'' \cite[p.~54]{BGF}. Pereira gave the first answer to Brunella's call~\cite{Pereira}: 

\begin{theorem}[Pereira] \label{thm:f3} A birational model for Brunella's very special foliation is the foliation \(\mathcal{H}_3\) of degree two on \(\PP^2\) given by
\begin{equation}\label{eq:f3} (3xy^2-3xyz+xz^2-3y^3+y^2z)\,\dd x+x(3y^2 -3yz -3xy +3xz)\, \dd y +x(2y^2-xz)\,\dd z = 0;\end{equation}
it is tangent to the nodal cubic \(3xy^2-y^3-3xyz+xz^2=0\), which gives rise to the link, and to its inflectional lines $x=0$ and $x-3y+z =0$.	
\end{theorem}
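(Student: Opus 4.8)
The plan is to \emph{recognize} the foliation \(\mathcal H_3\) of \eqref{eq:f3} rather than construct it: I would verify that \(\mathcal H_3\) is a genuine degree-two foliation which, in a suitable blow-up of \(\PP^2\), carries a \emph{link} of self-intersection three, and then invoke the Brunella--Santos trichotomy recalled in the introduction --- its first case states precisely that a foliation admitting a link with \(C^2=3\) is birationally equivalent to \(\mathcal F_3\). (A second, more synthetic route would start from the description of \(\mathcal F_3\) as the quotient of a particular linear foliation of \(\PP^2\) by an order-three biholomorphism and produce an explicit birational change of coordinates carrying that quotient foliation to \eqref{eq:f3}; I comment on it at the end.)

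Write \(\omega=A\,\dd x+B\,\dd y+C\,\dd z\) for the form in \eqref{eq:f3}. Three routine checks are needed. \textbf{(i) Degree.} The coefficients \(A,B,C\) are homogeneous of degree \(3\) and satisfy the Euler relation \(xA+yB+zC\equiv0\); moreover \(B\) and \(C\) are divisible by \(x\) but \(A\) is not, and \(B/x\) and \(C/x\) are coprime, so \(A,B,C\) have no common factor. Hence \(\mathcal H_3\) is a foliation of degree \(2\). \textbf{(ii) Invariant curves.} Invariance of \(\{x=0\}\) is immediate from \(x\mid B\) and \(x\mid C\); invariance of \(\{x-3y+z=0\}\) follows from the vanishing of \(B+3A\) along that line; and invariance of the cubic \(C_0=\{F=0\}\) with \(F=3xy^2-y^3-3xyz+xz^2\) amounts to the divisibility of \(\dd F\wedge\omega\) by \(F\), which, since \(C_0\) is irreducible, reduces to the divisibility of one coefficient, e.g.\ \(F_xB-F_yA\), by \(F\). \textbf{(iii) Geometry of \(C_0\).} Computing \(\nabla F\) puts the unique node of \(C_0\) at \([1:0:0]\); restricting \(F\) to the two lines gives \(F|_{\{x=0\}}=-y^3\) and \(F|_{\{x-3y+z=0\}}=-(y-x)^3\), so \(\{x=0\}\) and \(\{x-3y+z=0\}\) are exactly the inflectional lines of \(C_0\), meeting it only at the inflection points \([0:0:1]\) and \([1:1:2]\) respectively, each with contact order three.

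The substantive step is exhibiting the link. Solving \(A=B=C=0\) shows that the singular points of \(\mathcal H_3\) lying on \(C_0\) are exactly the node \([1:0:0]\) and the two inflection points (the remaining singularities lie off \(C_0\)); at \([1:0:0]\) the linear part has nonzero determinant and eigenvalue ratio a primitive sixth root of unity, so that singularity is reduced and non-degenerate. To turn \(C_0\) into a link one must clear the two inflection-point singularities off it, so I would blow up \(\PP^2\) at each inflection point and along the infinitely near points at which \(C_0\) and the corresponding inflectional line remain tangent --- three at each inflection point, as dictated by the contact order --- obtaining a surface \(\pi\colon M\to\PP^2\). On \(M\) the strict transform \(\tilde C_0\) is still a rational curve with a single node (over \([1:0:0]\), which \(\pi\) does not touch) and has self-intersection \(9-3-3=3>0\). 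The main obstacle is then the explicit local analysis at the two inflection points: one must check that, after this sequence of blow-ups, the transformed foliation has no singularity on \(\tilde C_0\) besides the node --- that is, that none of the singular points created on the exceptional divisors lands on \(\tilde C_0\). Granting this, \(\tilde C_0\) is a link of \(\mathcal H_3\) with \(\tilde C_0^2=3\), and the first case of the trichotomy gives that \(\mathcal H_3\) is birationally equivalent to \(\mathcal F_3\). The remaining assertions of the statement --- tangency to the nodal cubic \(C_0\) and to its inflectional lines \(\{x=0\}\) and \(\{x-3y+z=0\}\), and that the link arises from the node of \(C_0\) --- are precisely what was recorded in (ii)--(iii) and in this paragraph.

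As for the synthetic route, its difficulty is the mirror image of the obstacle above: from the quotient presentation of \(\mathcal F_3\) one must guess the right birational contraction to \(\PP^2\) producing a model of least possible degree, and then control the degree and the singular scheme of the transported foliation. The bridge to the reduced Chazy equation announced in the abstract is presumably the device that furnishes those coordinates, and it is the path I would actually follow were the explicit contraction not already at hand.
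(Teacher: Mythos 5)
Your proposal is correct in substance but takes a genuinely different route from the paper's. The paper proves Theorem~\ref{thm:f3} constructively: it first identifies \(\mathcal{F}_3\) with the foliation \(\mathcal{G}_{\mathrm{IV}}\) induced on \(\PP(1,2,3)\) by the reduced Chazy~IV equation, via an explicit \(\ZZ_3\)-equivariant birational map \(\Phi_3\) (part of Theorem~\ref{thm:links-chazy}), and then transports \(\mathcal{G}_{\mathrm{IV}}\) to \(\PP^2\) by the explicit map (\ref{eq:birp123top2}); the line \(x=0\) appears as exceptional data of that map, and the cubic and the line \(x-3y+z=0\) are the images of the invariant curves \(C\) and \(B\). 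You instead propose a recognition argument: verify the tangencies, locate the singularities on the cubic, resolve the two sitting at its inflection points, exhibit a link with \(C^2=3\), and invoke the Brunella--Santos trichotomy (equivalently \cite[Prop.~4.3]{BGF}). This is exactly the method of the paper's ``first proof'' of Theorem~\ref{f3} for the other planar model \(\mathcal{J}\), so it is a legitimate alternative: it avoids the Chazy detour and the weighted projective plane, at the price of resting on the classification theorem and of yielding no explicit birational equivalence (hence none of the explicit symmetries of Proposition~\ref{prop:symf3p2}, which the paper's route gives for free).

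The step you flag and leave unverified (``granting this'') does go through, so there is no genuine gap, but be aware of an asymmetry you did not anticipate: the two inflection points are not alike. At \((1:1:2)\) the foliation has a linearizable node with eigenvalues \(1:3\), whose separatrices are the cubic and the line \(x-3y+z=0\) with contact three; your three blow-ups pass through a \(2:1\) node and then a radial point, whose blow-up is a dicritical divisor carrying no singularities and met transversally by the strict transform of the cubic. At \((0:0:1)\), by contrast, the singularity is nilpotent of Milnor number three (in the chart \(z=1\) the dual vector field is \(3x(y-1)(y-x)\,\indel{x}-\bigl(x(3y^2-3y+1)-y^2(3y-1)\bigr)\,\indel{y}\)), so the resolution there is not literally ``dictated by the contact order''; still, blowing up the point and then twice more along the cubic, one reaches after two blow-ups a radial \(1:1\) point (where the cubic, the strict transform of \(x=0\) and the second exceptional curve meet pairwise transversally), and the third blow-up is again dicritical and free of singularities, so nothing lands on the strict transform of the cubic over this point either. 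All six centres being simple points of the successive strict transforms of the cubic, \(\tilde C_0^2=9-6=3\), and the node \((1:0:0)\), with eigenvalue ratio \(-\omega\), a primitive sixth root of unity, is the unique singularity along it; your appeal to the trichotomy is then correct. If you wished to bypass the classification altogether, the paper's own argument---pushing \(\mathcal{G}_{\mathrm{IV}}\) forward by (\ref{eq:birp123top2})---is precisely the ``synthetic route'' you sketch at the end.
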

(The above is not Pereira's original model, but is linearly equivalent to it; our choice of linear coordinates will be justified later on.) It is natural to extend Brunella's call to Santos's foliations \(\mathcal{F}_4\) and \(\mathcal{F}_6\). Our first results respond to this.

\begin{theorem}\label{thm:f4} A birational model for \(\mathcal{F}_4\) is given by the degree-two foliation \(\mathcal{H}_4\) on \(\PP^2\) defined by
\begin{multline} \label{eq:f4}
(2xy^2-2xyz+xz^2-4y^3+y^2z)\,\dd x+x(4y^2-3yz-2xy+2xz)\,\dd y+x(2y^2-xz)\,\dd z=0.\end{multline}
It is tangent to the nodal cubic \(2y^3-2xy^2+2xyz-xz^2=0\), which gives rise to the link, to its inflectional line \(x=0\), and to the smooth conic \(y^2+4xy-2xz-x^2=0\). It is the only degree-two foliation on \(\PP^2\) simultaneously tangent to these cubic, conic and line. 
\end{theorem}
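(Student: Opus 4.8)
The plan is to establish the three assertions separately: the list of invariant curves (with the cubic producing a link), the birational equivalence with $\mathcal{F}_{4}$, and the uniqueness clause. For the invariant curves, write $\omega$ for the $1$-form on the left of \eqref{eq:f4}; for $F$ in turn the cubic $2y^{3}-2xy^{2}+2xyz-xz^{2}$, the linear form $x$, and the conic $y^{2}+4xy-2xz-x^{2}$, one checks by a direct computation that $\dd F\wedge\omega$ is divisible by $F$. Since the cubic is irreducible with a single ordinary node (at $[1:0:0]$, as one reads off from its degree-two part there) and the conic is smooth and irreducible, all three curves are $\mathcal{H}_{4}$-invariant, $x=0$ being the flex line of the cubic at $[0:0:1]$. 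One then computes the singular scheme of $\mathcal{H}_{4}$: it has the expected length $d^{2}+d+1=7$ of a degree-two foliation, and the only singular point of $\mathcal{H}_{4}$ on the cubic is its node, which one verifies to be reduced and non-degenerate; hence that node is a link for $\mathcal{H}_{4}$ in the sense of Section~\ref{background}.

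For the birational equivalence I would realize $\mathcal{H}_{4}$ directly as a planar model of the quotient foliation defining $\mathcal{F}_{4}$. Starting from the presentation recalled in Section~\ref{F3F4F6} --- the particular linear foliation $\mathcal{L}$ on $\PP^{1}\times\PP^{1}$ together with the order-four biholomorphism $\sigma$ for which $\mathcal{F}_{4}$ is birationally equivalent to $\mathcal{L}/\langle\sigma\rangle$ --- the steps are: (i) fix affine coordinates $(s,t)$ making $\mathcal{L}$ and $\sigma$ explicit; (ii) produce two $\sigma$-invariant rational functions $u,v$ realizing a birational map $\bigl(\PP^{1}\times\PP^{1}\bigr)/\langle\sigma\rangle\dashrightarrow\PP^{2}$; (iii) substitute to express $\mathcal{L}$ as an affine $1$-form in $(u,v)$ and homogenize; and (iv) exhibit the linear change of coordinates of $\PP^{2}$ carrying the homogenized form to \eqref{eq:f4}, pinning the coordinates down by demanding that the images under the quotient of the coordinate curves of $\PP^{1}\times\PP^{1}$ and of the fixed points of $\sigma$ be, respectively, the cubic, its flex line, and the singular points of $\mathcal{H}_{4}$. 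The invariants $u,v$ in (ii) are exactly those in which the reduced Chazy~V equation naturally lives, which is what makes \eqref{eq:f4} so clean; choosing them and then recognizing the pushed-forward foliation as \eqref{eq:f4} is the main obstacle, and the point where the (a priori unexpected) connection with Chazy's equations is essential --- everything else here is bookkeeping of exceptional and contracted curves. As an independent check one may instead verify that the link found above, after reduction of the singularities of $\mathcal{H}_{4}$ and the birational contractions dictated by Brunella's theory, has self-intersection $2$; by the classification recalled in the introduction this alone forces $\mathcal{H}_{4}$ to be birationally equivalent to $\mathcal{F}_{4}$.

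For uniqueness, recall that degree-two foliations on $\PP^{2}$ form a $\PP^{14}$ and that, for each fixed curve $D$, the condition that $D$ be invariant is a system of linear conditions on this $\PP^{14}$. Imposed separately, invariance of the cubic, the line and the conic accounts for $11$, $3$ and $7$ conditions (for curves in such general position), so a naive count ($21>14$) predicts no common solution at all; the content of the statement is that, for this particular configuration, the tangencies shared at the mutual intersection points make these conditions so dependent that the combined system has corank exactly one. One can confirm this by a direct rank computation; more conceptually, a degree-two foliation tangent to all three curves is tangent to the invariant sextic cut out by their product, and combining the strong constraints that so large an invariant curve imposes on a degree-two foliation (via Darboux's theory) with the singularities forced at the points where the curves meet cuts the linear system down to a pencil of $1$-forms, whose unique foliation is \eqref{eq:f4}. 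Since $\mathcal{H}_{4}$ lies in the system, it is the only such foliation.
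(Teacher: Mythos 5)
Your plan has the right overall architecture (invariant curves, birational equivalence via the Chazy~V quotient, uniqueness), but two of the three parts contain genuine problems. First, the claim in your opening paragraph that ``the only singular point of $\mathcal{H}_4$ on the cubic is its node'' is false: the cubic $2y^3-2xy^2+2xyz-xz^2=0$ also passes through the singular points $(1:1:2)$ of $\mathcal{H}_4$ (a $1:4$ node of the foliation, where the cubic is tangent to the invariant conic with contact of order four) and $(0:0:1)$ (the nilpotent singularity of multiplicity three, where the inflectional line meets the cubic). Consequently the node at $(1:0:0)$ is \emph{not} a link for $\mathcal{H}_4$ in the sense of Section~\ref{background}; indeed a plane nodal cubic has self-intersection $9$, while by the Brunella--Santos classification recalled in the introduction links only occur with $C^2\in\{1,2,3\}$. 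The phrase ``gives rise to the link'' in the statement means that the strict transform of the cubic becomes the link on the quotient model $M_4$ after the birational equivalence -- it is a consequence of that equivalence, not of a local check at the node. Second, for the equivalence itself you describe steps (i)--(iv) but defer precisely the step you call ``the main obstacle'': producing the $T_4$-invariant functions and recognizing the quotient as \eqref{eq:f4}. That is the entire content of the paper's proof, which supplies the explicit solution $f=(X-1)(Y-1)/(XY+\ii X-\ii Y-1)$ of the reduced Chazy~V equation along the vector field tangent to $\mathcal{E}_4$, the equivariant map $\Phi_4=(f,D_4f,D_4^2f)$ onto the level surface of the first integral $B$, the resulting identification of $\mathcal{F}_4$ with $\mathcal{G}_{\mathrm{V}}$ on $\PP(1,2,3)$, and finally the explicit birational map $[x:y:z]\mapsto(x^3:xy:z)$ carrying $\mathcal{G}_{\mathrm{V}}$ to \eqref{eq:f4}. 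Your fallback -- reducing the singularities of $\mathcal{H}_4$ and checking that the transform of the cubic becomes a link of self-intersection $2$, then invoking Santos's classification -- is a legitimate alternative route (it parallels the paper's first proof of Theorem~\ref{f3} for $\mathcal{F}_3$), but it is not carried out, and as written it leans on the erroneous ``link in the plane'' claim.

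For uniqueness, the direct rank computation you propose would work if actually performed, but the ``conceptual'' version is muddled: if the tangency conditions only cut the space of $1$-forms down to a pencil, one would generically get a one-parameter family of foliations, not a unique one (this is exactly what happens for $\mathcal{H}_3$, as the paper remarks). You also miss the one-line argument the paper uses: the tangency divisor of two distinct degree-two foliations of $\PP^2$ has degree five, so no two such foliations can share invariant curves of total degree $3+2+1=6$; hence $\mathcal{H}_4$ is the only one. (Your dimension counts for the individual invariance conditions -- in particular the $11$ for the cubic -- do not look right either, though they play no logical role.)
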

 
\begin{theorem}\label{thm:f6} A birational model for \(\mathcal{F}_6\) is given by the degree-two foliation \(\mathcal{H}_6\) on \(\PP^2\) defined by 
\begin{equation} 	\label{eq:f6}
(xy^2-xyz+xz^2-5y^3+y^2z)\, \dd x+x(5y^2-3yz-xy+xz)\,\dd y+x(2y^2-xz)\,\dd z = 0.
\end{equation}		
It is tangent to the nodal cubic \(3y^3-xy^2+xyz-xz^2=0\), which gives rise to the link, to its inflectional line \(x=0\), and to the nodal cubic \(8y^3-15xy^2+6xyz-3xz^2-6x^2y+6x^2z+x^3=0\). It is the only degree-two foliation on \(\PP^2\) tangent to both cubics. 
\end{theorem}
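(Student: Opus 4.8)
The plan is to split the argument into three tasks: (i) establishing the birational equivalence between $\mathcal{H}_6$ and $\mathcal{F}_6$; (ii) checking directly that the foliation \eqref{eq:f6} leaves the two stated cubics and the line $x=0$ invariant, with the node of the first cubic providing the link; and (iii) proving the uniqueness clause.

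For (i) I would start from the description of $\mathcal{F}_6$ recalled in Section~\ref{F3F4F6}: it is the quotient, by an order-six automorphism $\sigma$, of an explicit foliation $\mathcal{G}_6$ on the degree-six del Pezzo surface $\widehat{\PP^2}$ (the blow-up of $\PP^2$ at three non-collinear points), where $\sigma$ cyclically rotates the hexagon of $(-1)$-curves. The idea is to make this quotient explicit by choosing convenient generators of the field of $\sigma$-invariant rational functions, pushing $\mathcal{G}_6$ down to the (singular, rational) quotient surface, and then resolving singularities and contracting invariant rational curves until one reaches $\PP^2$, normalizing the resulting $1$-form to \eqref{eq:f6}; in parallel one identifies $\mathcal{G}_6$ with the foliation attached to the reduced Chazy VI equation and runs the analogous reductions on that side. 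Throughout, one tracks the relevant curves under the chain of birational maps: the hexagon of $(-1)$-curves is a single $\sigma$-orbit, and its image — a one-nodal rational curve — becomes the link (its node being the image of the hexagon's node), while the fixed loci of $\sigma$ and of its powers account for the two invariant cubics and the line $x=0$. A second, more self-contained route to (i) is available once \eqref{eq:f6} is in hand: by the trichotomy of Brunella and Santos it suffices to exhibit a single link, hence to show that, after blowing up the singularities of $\mathcal{H}_6$ lying on the first cubic other than its node, the strict transform of that cubic is a one-nodal rational curve of self-intersection $1$ whose node is the unique singularity of the foliation along it, of reduced non-degenerate type. Either way, this is the main obstacle: the delicate part is the birational bookkeeping and, in particular, certifying that the transform of the first cubic ends with self-intersection exactly $1$ — and not $2$ or $3$ — which is what tells $\mathcal{H}_6$ apart from the corresponding planar models of $\mathcal{F}_3$ and $\mathcal{F}_4$.

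Task (ii) is a finite computation. Writing the left-hand side of \eqref{eq:f6} as $\omega = A\,\dd x+B\,\dd y+C\,\dd z$ with $A,B,C$ homogeneous cubics, one verifies at once that $xA+yB+zC=0$ and that $A,B,C$ have no common factor, so $\omega$ does define a degree-two foliation. Invariance of a curve $\{F=0\}$ means $F\mid \dd F\wedge\omega$; for $F=x$ this holds because $x$ divides $B$ and $C$, and for each of the two cubics one reduces $\dd F\wedge\omega$ modulo $F$ (or substitutes a rational parametrization of the cubic) and sees the remainder vanish. The same computation shows that the singular point $[1:0:0]$ of the first cubic is an ordinary node, that it is a singularity of $\mathcal{H}_6$ whose linear part has eigenvalue ratio a primitive cube root of unity — hence reduced and non-degenerate, with the two local branches of the cubic as its separatrices — and locates the remaining singularities of $\mathcal{H}_6$ on that cubic, which are the ones to be blown up before its strict transform is a link.

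For (iii): the degree-two foliations of $\PP^2$ form a projective space of dimension fourteen, and requiring a fixed reduced plane curve to be invariant is a system of linear conditions on the coefficients of $\omega$. I would write out the conditions coming from the invariance of the two cubics $F_1=3y^3-xy^2+xyz-xz^2$ and $F_2=8y^3-15xy^2+6xyz-3xz^2-6x^2y+6x^2z+x^3$ and check, by a rank computation, that together they have rank fourteen; since $\mathcal{H}_6$ satisfies them, it is the unique degree-two foliation tangent to both cubics. Equivalently, one verifies that no nonzero first-order deformation of \eqref{eq:f6} keeps $F_1$ and $F_2$ invariant.
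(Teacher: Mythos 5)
Your plan is essentially correct, but it mixes two routes, one of which coincides with the paper's and one of which is genuinely different, so let me compare. The paper does not recompute the order-six quotient from invariant functions: it invokes Theorem~\ref{thm:links-chazy}, which rests on the explicit \(\ZZ_6\)-equivariant birational map \(\Phi_6\) of \cite{guillot-chazy} identifying \(\mathcal{F}_6\) with the Chazy~VI foliation \(\mathcal{G}_{\mathrm{VI}}\) on \(\PP(1,2,3)\), and then simply pushes \(\mathcal{G}_{\mathrm{VI}}\) forward under the elementary map (\ref{eq:birp123top2}), \([x:y:z]\mapsto(x^3:xy:z)\); the two invariant cubics are the strict transforms of the invariant sextics \(B\) and \(C\), and the line \(x=0\) is created by this modification. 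Your first route amounts to redoing that identification by hand, which is feasible but heavier than what the paper actually needs. Your second route---exhibit a link of self-intersection \(1\) for a model of \(\mathcal{H}_6\) and invoke the Brunella--Santos trichotomy---is a genuinely different argument, parallel to the paper's first proof of Theorem~\ref{f3}, and it does go through: the cubic \(3y^3-xy^2+xyz-xz^2=0\) has self-intersection \(9\) and carries, besides the node at \((1:0:0)\) (eigenvalues \(\omega:1\), reduced non-degenerate), the \(1{:}5\) linearizable node at \((1:1:2)\) and the multiplicity-three nilpotent point at \((0:0:1)\); reducing the former requires five blow-ups (ending in a dicritical component) and the latter three, all centers lying on the curve, so the transform has self-intersection \(9-5-3=1\) and meets the exceptional locus only at regular points of the foliation. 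Note that ``blowing up the singularities other than the node'' must be understood as the full reduction including infinitely near points---this bookkeeping is exactly the step you defer, and it is where the count \(1\) (rather than \(2\) or \(3\)) is certified. Finally, for uniqueness the paper has a computation-free argument preferable to your rank-fourteen check: the tangency divisor of two distinct degree-two foliations of \(\PP^2\) has degree \(2+2+1=5\), while the two invariant cubics have total degree \(6\), so no second degree-two foliation can be tangent to both.
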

 
Figure \ref{juntas} shows, schematically, the configuration of invariant rational curves for the foliations of these three theorems.

\begin{figure}
\centering
\includegraphics[width=0.9\textwidth]{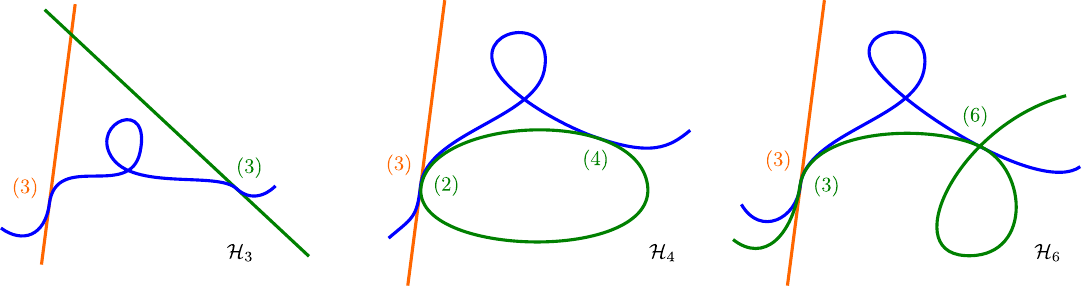} 
\caption{Configuration of the invariant curves for the foliations in Theorems~\ref{thm:f3}, \ref{thm:f4} and \ref{thm:f6}. In blue, the nodal cubics that produce the links; in orange, an inflectional line; in green, a line, a conic and a nodal cubic, respectively. Numbers denote local intersection multiplicities with the nodal cubic.}
	\label{juntas}
\end{figure}

The planar models of Theorems~\ref{thm:f3}, \ref{thm:f4} and \ref{thm:f6} (this is, both Pereira's and our own) arise in a unified way through an unexpected connection with the Chazy equations. The latter appeared more than a hundred years ago in Chazy's investigations on polynomial third-order equations which are free of movable critical points, investigations aimed at extending Painlev\'e's work on second-order equations to higher order.

The \emph{reduced Chazy~IV, V and VI equations} are, respectively, the autonomous, third-order, ordinary differential equations 
\begin{align}
\label{eq:chazy_iv} x''' & = 3xx''+3(x')^2-3x^2x',\\
\label{eq:chazy_v} x''' & = 2xx''+4(x')^2-2x^2x', \\
\label{eq:chazy_vi} x''' & = xx''+5(x')^2-x^2x' 
\end{align}
(see~\cite[p.~336]{chazy}; for their integration, see~\cite[p.~343]{chazy}, \cite[Sections 6.4--6.6]{cosgrove-chazyclasses}, or \cite{guillot-chazy}). These equations have the form \(x'''=P(x,x',x'')\), with \(P(x,y,z)\) a polynomial which is of degree~\(4\) when \(x\), \(y\) and \(z\) are, respectively, given the weights \(1\), \(2\) and~\(3\).
They may be described by polynomial vector fields on \(\CC^3\) of the form
\[W=y\del{x}+z\del{y}+P(x,y,z)\del{z}.\]

The action of \(\CC^*\) on \(\CC^3\) associated to the above weights is given, for \(\lambda\in\CC^*\), by
\begin{equation} \label{C3-quasi}
(x,y,z)\mapsto (\lambda x,\lambda^2 y,\lambda^3z).
\end{equation}
The previous quasihomogeneity property for \(P\) is equivalent to the fact that \(P(\lambda x,\lambda^2 y, \lambda^3 z)=\lambda^4P(x,y,z)\). The transformation (\ref{C3-quasi}) acts upon a vector field \(W\) as above by dividing it by~\(\lambda\). The action preserves thus the foliation on \(\CC^3\) induced by \(W\), and induces a foliation on the quotient of \(\CC^3\setminus \{0\}\) under the action (\ref{C3-quasi}), the two-dimensional variety known as the \emph{weighted projective plane} \(\PP(1,2,3)\) (to be discussed in Section~\ref{sec:p123}).

\begin{theorem}\label{thm:links-chazy} The foliations on \(\PP(1,2,3)\) induced by the reduced Chazy IV, V and VI equations are birationally equivalent to the foliations \(\mathcal{F}_3\), \(\mathcal{F}_4\) and \(\mathcal{F}_6\), respectively.
\end{theorem}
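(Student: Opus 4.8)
The plan is to compute, for each of the three reduced Chazy equations, the foliation it induces on \(\PP(1,2,3)\) via the vector field \(W=y\del{x}+z\del{y}+P(x,y,z)\del{z}\), to express that foliation in a convenient affine chart, and to identify it there with one of the planar models \(\mathcal{H}_3\), \(\mathcal{H}_4\), \(\mathcal{H}_6\) of Theorems~\ref{thm:f3}--\ref{thm:f6}; the asserted birational equivalences with \(\mathcal{F}_3\), \(\mathcal{F}_4\), \(\mathcal{F}_6\) then follow at once from those theorems.

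Concretely, on the affine chart \(U=\{x\neq 0\}\) of \(\PP(1,2,3)\) the weight-zero functions \(p=y/x^2\) and \(q=z/x^3\) are coordinates identifying \(U\) with \(\CC^2\), and \(\sigma(p,q)=(1,p,q)\) is a section over \(U\) of the quotient map \(\pi\colon\CC^3\setminus\{0\}\to\PP(1,2,3)\). Writing \(W(1,p,q)=p\,\del{x}+q\,\del{y}+P(1,p,q)\,\del{z}\) and decomposing it into its component along the quasiradial field \(R=x\del{x}+2y\del{y}+3z\del{z}\) (which spans \(\ker\dd\pi\)) and its component in the image of \(\dd\sigma\), one reads off that the quotient foliation is generated on \(U\) by
\[V=(q-2p^2)\,\del{p}+\bigl(P(1,p,q)-3pq\bigr)\,\del{q},\]
equivalently by the \(1\)-form \(\bigl(P(1,p,q)-3pq\bigr)\,\dd p+(2p^2-q)\,\dd q=0\). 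The chart \(U\) is smooth and dense, so working there costs nothing for a birational statement.

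It remains to substitute \(P(1,p,q)=3q+3p^2-3p\) for Chazy~IV, \(=2q+4p^2-2p\) for Chazy~V, and \(=q+5p^2-p\) for Chazy~VI, and to compare with the restrictions of the \(1\)-forms \eqref{eq:f3}, \eqref{eq:f4}, \eqref{eq:f6} to the affine chart \(\{x=1\}\subset\PP^2\), which in coordinates \(y,z\) are \((3y^2-3yz-3y+3z)\,\dd y+(2y^2-z)\,\dd z\), \((4y^2-3yz-2y+2z)\,\dd y+(2y^2-z)\,\dd z\) and \((5y^2-3yz-y+z)\,\dd y+(2y^2-z)\,\dd z\), respectively. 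In each case the two \(1\)-forms agree term by term under \((p,q)\leftrightarrow(y,z)\). Hence the birational map \(\PP(1,2,3)\dashrightarrow\PP^2\) given by \([x:y:z]\mapsto[x^3:xy:z]\) --- which restricts to the identity between the two charts \(\{x\neq 0\}\), sending \((p,q)=(y/x^2,z/x^3)\) to the affine coordinates \((y/x,z/x)\) of \(\PP^2\) --- carries the Chazy~IV, V and VI foliations on \(\PP(1,2,3)\) to \(\mathcal{H}_3\), \(\mathcal{H}_4\) and \(\mathcal{H}_6\); since it is an isomorphism between dense affine charts it is indeed birational. By Theorems~\ref{thm:f3}, \ref{thm:f4} and \ref{thm:f6}, these are birational models for \(\mathcal{F}_3\), \(\mathcal{F}_4\) and \(\mathcal{F}_6\), which proves the theorem.

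I do not expect a real obstacle here: once the quotient foliation is written down in the chart \(U\), the proof reduces to three elementary polynomial substitutions. The only genuine input --- and the reason the planar coordinates in \eqref{eq:f3}--\eqref{eq:f6} are normalized as they are --- is the choice of the birational model \([x:y:z]\mapsto[x^3:xy:z]\) of \(\PP(1,2,3)\) that renders all three quotient foliations as these particular degree-two foliations of \(\PP^2\); this is the common mechanism behind the three models, Pereira's included. If one preferred not to invoke Theorems~\ref{thm:f4} and~\ref{thm:f6}, one could instead trace the standard birational reduction of \(\PP(1,2,3)\) together with the order three, four and six cyclic symmetries to identify the quotient foliation with that of the relevant linear foliation directly; quoting the stated theorems is merely shorter.
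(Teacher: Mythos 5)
Your chart computation is correct: in the coordinates \(p=y/x^2\), \(q=z/x^3\) the Chazy foliations on \(\PP(1,2,3)\) are given by \(\bigl(P(1,p,q)-3pq\bigr)\dd p+(2p^2-q)\dd q=0\), and under the map \([x:y:z]\mapsto(x^3:xy:z)\) these match the restrictions of (\ref{eq:f3}), (\ref{eq:f4}), (\ref{eq:f6}) to the chart \(\{x\neq0\}\). But this only proves the easy half of the statement, namely that the Chazy foliations are birationally equivalent to the planar foliations \(\mathcal{H}_3\), \(\mathcal{H}_4\), \(\mathcal{H}_6\). To conclude you invoke Theorems~\ref{thm:f4} and~\ref{thm:f6}, and here the argument is circular: in this paper those two theorems have no independent proof --- they are deduced from Theorem~\ref{thm:links-chazy} precisely by the chart computation you carried out (this is the content of Sections~\ref{equivF4} and~\ref{equivF6}, and the introduction states explicitly that Theorems~\ref{thm:f4} and~\ref{thm:f6} ``will follow from it''). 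Only the Chazy~IV case survives your route, because Theorem~\ref{thm:f3} is Pereira's and is established independently of this paper; for Chazy~V and~VI nothing external tells you that \(\mathcal{H}_4\) and \(\mathcal{H}_6\) are birational models of \(\mathcal{F}_4\) and \(\mathcal{F}_6\).

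The genuine content of the theorem is exactly the step you relegate to a closing aside (``one could instead trace the standard birational reduction \ldots{} directly''): identifying the Chazy quotient foliations with the quotients of the linear foliations \(\mathcal{E}_3\) on \(\PP^2\), \(\mathcal{E}_4\) on \(\PP^1\times\PP^1\), and \(\mathcal{E}_6\) on \(\mathrm{Bl}_3(\PP^2)\) under the cyclic automorphisms of orders three, four and six. The paper does this by constructing, for each case, a rational function \(f\) that solves the corresponding reduced Chazy equation with respect to a linear vector field tangent to \(\mathcal{E}_n\), so that \((f,Df,D^2f)\) gives an explicit birational map \(\Phi_n\) onto the level surface \(\Sigma=B^{-1}(1)\) of the first integral, equivariant with respect to the \(\ZZ_n\)-actions on both sides; passing to the quotients yields the equivalence. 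This is nontrivial and is not replaced by your three polynomial substitutions; alternatively one would have to verify directly that each planar model carries a link of self-intersection \(3\), \(2\), \(1\) (after a suitable birational modification) and appeal to the Brunella--Santos classification, which you also do not do. As written, the proposal proves the \(\mathcal{F}_3\) case (via Pereira) and leaves the \(\mathcal{F}_4\) and \(\mathcal{F}_6\) cases unproved.
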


Brunella considered that ``it would be natural to look for other types of birational models [for~\(\mathcal{F}_3\)]''~\cite[p.~54]{BGF}. The above result provides alternative models, not just for~\(\mathcal{F}_3\), but for \(\mathcal{F}_4\) and \(\mathcal{F}_6\) as well. It will follow, on the one hand, from the definitions of these foliations as quotients, and, on the other, from the description of the foliations induced by the corresponding Chazy equations appearing in~\cite[pp. 71--74]{guillot-chazy}. We will revisit this last result in Section~\ref{sec:chazy}. Theorems~\ref{thm:f4} and \ref{thm:f6}, and an alternative proof of Theorem~\ref{thm:f3}, will follow from it, and from a particular explicit birational equivalence between \(\PP(1,2,3)\) and~\(\PP^2\).

The foliation \(\mathcal{F}_3\) has a characterizing involution, the \emph{foliated flop} (see Section~\ref{F3F4F6}), central to Brunella's interest in it~\cite{BrMinimal}. We can fully describe the groups of birational automorphisms of all of these foliations. 

\begin{theorem}\label{thm:bir_aut} The groups of birational automorphisms are:
\begin{itemize}
\item for \(\mathcal{F}_3\), a group of order six, isomorphic to the group of permutations in three symbols~\(S_3\);
\item for \(\mathcal{F}_4\), a group of order two (generated by an involution); and
\item for \(\mathcal{F}_6\), trivial.
\end{itemize}
\end{theorem}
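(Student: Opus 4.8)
The plan is to exploit the birational equivalences of Theorem~\ref{thm:links-chazy}, so that computing birational automorphisms of $\mathcal{F}_3$, $\mathcal{F}_4$, $\mathcal{F}_6$ becomes the problem of computing birational automorphisms of the foliations induced on $\PP(1,2,3)$ by the reduced Chazy IV, V and VI equations. A birational automorphism of a foliation $\mathcal{F}$ is a birational self-map of the ambient surface taking $\mathcal{F}$ to itself; since the group $\mathrm{Bir}(\mathcal{F})$ is a birational invariant, I may work on any convenient model, and the planar models $\mathcal{H}_3$, $\mathcal{H}_4$, $\mathcal{H}_6$ of Theorems~\ref{thm:f3}--\ref{thm:f6}, together with their distinguished invariant curves (the nodal cubic giving the link, the inflectional line $x=0$, and the extra line/conic/cubic) are the natural place to compute. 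The key point is that any birational automorphism of the foliation must permute the ``special'' invariant curves — those that are singled out intrinsically by the geometry of the foliation and its singularities — and in particular must fix or permute the curve(s) that produce the link, because the link (an invariant one-nodal rational curve of positive self-intersection carrying a single reduced non-degenerate singularity) is characterized by purely foliated data.

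The first step is to pin down, for each of the three foliations, the finite list of invariant algebraic curves that are canonically distinguished. For $\mathcal{H}_3$ these are the nodal cubic and its two inflectional lines $x=0$ and $x-3y+z=0$; for $\mathcal{H}_4$, the nodal cubic, its inflectional line $x=0$, and the smooth conic; for $\mathcal{H}_6$, the two nodal cubics and the line $x=0$. One then argues that a birational automorphism $\varphi$ of the foliation must send this configuration to itself: the link-producing curve(s) go to link-producing curve(s) by the foliated characterization, the inflectional line(s) are distinguished by their contact order with the cubic (the local intersection multiplicities recorded in Figure~\ref{juntas}), and the remaining curve is then forced. This reduces $\mathrm{Bir}(\mathcal{F}_i)$ to a subgroup of the (finite) group of linear or Cremona transformations preserving a rigid configuration of curves; for $\mathcal{H}_6$, where the two cubics and the line already over-determine a planar Cremona map, one expects very little room, giving triviality. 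For $\mathcal{H}_3$, the two inflectional lines and the cubic's node give a configuration with an evident $S_3$ of symmetries, which one identifies concretely — one of these is the foliated flop / characterizing involution of $\mathcal{F}_3$ from Section~\ref{F3F4F6}, another a birational (de Jonquières) involution, and the order-three element is, after the change of model alluded to in the abstract, linear. For $\mathcal{H}_4$ one expects a single nontrivial involution, swapping the two branches of the nodal cubic (or exchanging the roles suggested by the conic).

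Concretely, I would proceed as follows. First, establish the ``permutation of special curves'' lemma above, using the classification of links (three cases, Brunella--Santos) together with the self-intersection and singularity data to rule out that $\varphi$ could create new link curves or destroy existing ones after resolving indeterminacies. Second, for each foliation, restrict $\varphi$ to the minimal resolution where the configuration of invariant curves becomes a fixed dual graph (a tree of rational curves with prescribed self-intersections); a foliation automorphism must induce an automorphism of this weighted dual graph, and the finitely many graph automorphisms are then lifted (or shown not to lift) to actual birational maps. Third, for the graph automorphisms that do lift, write down the birational map explicitly in the planar coordinates of \eqref{eq:f3}, \eqref{eq:f4}, \eqref{eq:f6} and verify it preserves the one-form; this both exhibits the generators ($S_3$ for $\mathcal{F}_3$, one involution for $\mathcal{F}_4$, nothing new for $\mathcal{F}_6$) and, since Step~2 shows the group embeds in a finite group, proves we have found all of them. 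The symmetries of the reduced Chazy IV and V equations promised elsewhere in the paper are then read off by transporting these automorphisms through the explicit birational equivalence $\PP(1,2,3)\dashrightarrow\PP^2$.

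The main obstacle I anticipate is Step~2, and specifically the lifting question: showing that a given automorphism of the weighted dual graph of invariant curves is actually realized by a birational self-map of the surface preserving the \emph{foliation} (not merely the surface or the curve configuration), and — in the negative direction, crucially for the $\mathcal{F}_6$ triviality claim and for bounding $\mathcal{F}_4$ — that no graph automorphism outside the claimed group lifts. This requires a careful local analysis at the link node and along the chains of curves introduced by resolving the indeterminacies and the non-reduced singularities, tracking how the foliation's separatrices and its formal/analytic invariants (e.g. the ratio of eigenvalues at the node, or Camacho--Sad-type indices) constrain the candidate map. One also has to handle the possibility of birational, as opposed to biregular, automorphisms of the planar models directly — the de Jonquières involution for $\mathcal{F}_3$ being the prototype — so the argument cannot be purely about surface automorphisms but must genuinely control $\mathrm{Bir}$ of the pair (surface, foliation).
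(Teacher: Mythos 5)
The decisive gap is your assertion that ``Step~2 shows the group embeds in a finite group.'' Your Steps 1--2 only produce a homomorphism from \(\mathrm{Bir}(\mathcal{F}_i)\) to the finite symmetry group of a curve configuration (or of its weighted dual graph); nothing in the proposal controls the \emph{kernel}, i.e.\ the birational automorphisms that preserve the foliation and fix every invariant algebraic curve. Such a kernel is not automatically trivial: tangency to finitely many algebraic curves does not force finiteness. Indeed, for the covering linear foliation \(\mathcal{E}_3\) of Eq.~(\ref{eq:forma_f3}) every diagonal linear map preserves both the foliation and each line of the coordinate triangle, so upstairs the analogous kernel is a two-parameter group (compare Theorem~\ref{thm:birlin}, where \(\mathrm{Bir}(\PP^2,\mathcal{L}_\tau)\) contains the flows of \(X_\tau\) and \(x\,\indel{x}\)). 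Proving that no such continuous or infinite symmetries survive on the quotients is exactly the hard content of the theorem, and the paper's proof is transcendental rather than configurational: a birational automorphism acts holomorphically on the space of Zariski-dense leaves, which for \(\mathcal{F}_3\), \(\mathcal{F}_4\), \(\mathcal{F}_6\) is an elliptic orbifold with cone angles \((2\pi/3,2\pi/3,2\pi/3)\), \((\pi,\pi/2,\pi/2)\), \((\pi,2\pi/3,\pi/3)\) respectively, whose orbifold biholomorphism groups are \(S_3\), of order two, and trivial; an automorphism acting trivially on the leaf space is then lifted (via covering tubes, the foliated affine structure, and Hartogs) to a biholomorphism of the open set \(U\) upstairs, identified through Proposition~\ref{prop:struc_bir} with an element of the flow of \(X_\rho\), and forced to be the identity because it must commute with the order-\(n\) deck transformation. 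Your plan has no substitute for this kernel-triviality step; eigenvalue ratios or Camacho--Sad indices at the node constrain local behaviour but do not exclude a flow-like symmetry fixing every curve, every singular point and every local invariant.

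A secondary, but real, problem is that your ``permutation of special curves'' lemma is false as literally stated in a fixed model: Brunella's flop---the very automorphism you must capture---\emph{contracts} the link, collapsing it to its node and recreating the new link from the blown-up node (Section~\ref{F3F4F6}, Figure~\ref{flopabstrato}); in the planar model (\ref{eq:f3}) the corresponding quartic involution likewise contracts the invariant cubic. So the strict transform of a link-producing curve need not be a curve at all, and the argument must be set up on a resolution after eliminating indeterminacies rather than invoked as a permutation of plane curves. That part is repairable, but together with the missing kernel argument the proposal as it stands does not establish the upper bounds for \(\mathcal{F}_3\) and \(\mathcal{F}_4\), nor the triviality claim for \(\mathcal{F}_6\); it only re-exhibits the known generators.
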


For the models for \(\mathcal{F}_3\) and \(\mathcal{F}_4\) given in Theorems~\ref{thm:f3} and \ref{thm:f4}, Propositions~\ref{prop:symf3p2} and~\ref{prop:symf4p2} will give explicit formulas for generators of these groups.

The connection between the special quotient foliations and the Chazy equations will also bear fruits on the Chazy side. The symmetries of the quotient foliations will allow us, from a solution to either the reduced Chazy IV or V equation, to produce another solution of the same equation. This is the content of the next two results. 

\begin{theorem}\label{thm:inv-chazy_iv}
If \(x(t)\) is a solution to the reduced Chazy IV equation (\ref{eq:chazy_iv}), so are
\begin{equation}\label{ChIVflop}\frac{2xx'-x''}{x'},\end{equation}
\begin{equation} \label{ChIVtri} \frac{x^2-x'}{x},\end{equation}
\[\frac{x^3-3xx'+x''}{x^2-x'}, \;\; -\frac{x'(x^3-3xx'+x'')}{x^2x'+(x')^2-xx''}, \text{ and } \; \frac{x^2x'-xx''+(x')^2}{2xx'-x''}.\]
The first transformation is involutive, the second of order three, and these two generate a group isomorphic to $S_3$, which contains the other three substitutions.\end{theorem}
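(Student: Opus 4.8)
The plan is to produce these substitutions from the group of birational automorphisms of \(\mathcal F_3\), transported through its identification with the Chazy~IV foliation. Writing \(P(x,y,z)=3xz+3y^2-3x^2y\), the reduced Chazy~IV equation is encoded by the vector field \(W=y\,\del{x}+z\,\del{y}+P(x,y,z)\,\del{z}\) on \(\CC^3\), whose integral curves are precisely the maps \(t\mapsto(x(t),x'(t),x''(t))\) with \(x\) a solution; along such a curve the operator \(\dd/\dd t\) agrees with \(W\), so the leaves of the foliation \(\mathcal F_W\) induced by \(W\) are exactly the solutions modulo time translation. By Theorem~\ref{thm:links-chazy} the foliation that \(\mathcal F_W\) induces on \(\PP(1,2,3)\) is birationally equivalent to \(\mathcal F_3\); by Theorem~\ref{thm:bir_aut} its group of birational automorphisms is \(S_3\); and Proposition~\ref{prop:symf3p2} gives explicit generators of that group on the planar model \(\mathcal H_3\) of Theorem~\ref{thm:f3}. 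Conjugating those generators by the explicit birational equivalence \(\PP(1,2,3)\dashrightarrow\PP^2\) underlying Theorem~\ref{thm:links-chazy} yields an explicit copy of \(S_3\) acting by birational automorphisms of the Chazy~IV foliation on \(\PP(1,2,3)\).

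Next I would turn each such automorphism into a substitution on solutions. It lifts to a birational self-map \(\Phi\) of \(\CC^3\) preserving \(\mathcal F_W\); writing \(\Phi\) out in the coordinates \((x,y,z)=(x,x',x'')\), one checks it has companion form \(\Phi=(X,\,WX,\,W^2X)\) for a rational function \(X\) and that \(W^3X=P(X,WX,W^2X)\) — equivalently, \(\Phi\) preserves not merely the foliation \(\mathcal F_W\) but the vector field \(W\) itself. (That the multiplier of \(W\) is \(1\) rather than another constant is what makes the output an \emph{honest} substitution; morally it reflects the quasi-homogeneous, companion structure of the Chazy vector field, and it is confirmed by the explicit computation.) The identity \(W^3X=P(X,WX,W^2X)\) says precisely that \(t\mapsto X(x(t),x'(t),x''(t))\) solves the reduced Chazy~IV equation whenever \(x\) does. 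Applied to the two distinguished generators of \(S_3\), this produces the involution~(\ref{ChIVflop}) — the incarnation of Brunella's foliated flop — and the order-three substitution~(\ref{ChIVtri}).

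The group-theoretic assertions are then a bounded computation. One verifies directly that (\ref{ChIVflop}) composed with itself, and (\ref{ChIVtri}) composed three times, each return the identity substitution, and that the two satisfy the dihedral relation \((\sigma\tau)^2=\mathrm{id}\); an element of order two and an element of order three obeying that relation present the dihedral group of order six, namely \(S_3\), so the group they generate is \(S_3\). Its five non-identity elements are the two generators, the square of (\ref{ChIVtri}), and the two remaining transpositions; computing these as compositions of substitutions — plugging each generator together with its first two \(t\)-derivatives, reduced modulo the equation, into the other — gives \((x^3-3xx'+x'')/(x^2-x')\), \(-x'(x^3-3xx'+x'')/(x^2x'+(x')^2-xx'')\) and \((x^2x'-xx''+(x')^2)/(2xx'-x'')\), the three further substitutions listed.

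The step I expect to be the main obstacle is the explicit bookkeeping in the second paragraph: carrying the generators of \(\mathrm{Bir}(\mathcal F_3)\) — furnished in the planar model \(\mathcal H_3\subset\PP^2\) — through the birational chain of Theorem~\ref{thm:links-chazy} down to the companion coordinates \((x,x',x'')\), and simplifying the resulting rational maps to the stated form. As an independent safeguard, one can dispense with the geometry altogether and verify each displayed formula directly, by checking the single polynomial identity \(W^3X=P(X,WX,W^2X)\) for the corresponding rational function \(X(x,y,z)\); the composition and order relations can likewise be confirmed by hand.
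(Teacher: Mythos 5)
Your proposal is correct and follows essentially the same route as the paper: the substitutions are obtained by transporting the known $S_3$ of birational symmetries of $\mathcal{F}_3$ to the Chazy~IV foliation on $\PP(1,2,3)$, lifting to $\CC^3$ with the quasihomogeneous degree count forcing the right denominators, and ultimately verifying the identity $W^3X=P(X,WX,W^2X)$ (and the composition/order relations) by direct computation, which is exactly the calculation the paper invokes and omits. The only cosmetic difference is that you route the explicit generators through the planar model $\mathcal{H}_3$ and Theorem~\ref{thm:bir_aut}, whereas the paper conjugates the symmetries of $\mathcal{E}_3$ directly via $\Phi_3$ and needs only their existence, not the completeness of the automorphism group.
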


\begin{theorem}\label{thm:inv-chazy_v} 	If \(x(t)\) is a solution to the reduced Chazy V equation (\ref{eq:chazy_v}), so is
\[\frac{x^3-3xx'+x''}{x^2-x'}.\]
This transformation is involutive.
\end{theorem}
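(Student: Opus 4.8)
The plan is to realize this substitution as coming from the nontrivial element of \(\mathrm{Bir}(\mathcal{F}_4)\), by the same mechanism that produces the substitutions of Theorem~\ref{thm:inv-chazy_iv} out of \(\mathrm{Bir}(\mathcal{F}_3)\). By Theorem~\ref{thm:links-chazy}, the foliation induced on \(\PP(1,2,3)\) by the reduced Chazy~V equation (\ref{eq:chazy_v}) is birationally equivalent to \(\mathcal{F}_4\); by Theorem~\ref{thm:bir_aut}, the group \(\mathrm{Bir}(\mathcal{F}_4)\) is \(\ZZ/2\), generated by an involution \(\iota\) that Proposition~\ref{prop:symf4p2} describes explicitly on the planar model \(\mathcal{H}_4\) of Theorem~\ref{thm:f4}. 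Conjugating \(\iota\) through the birational equivalence between \(\PP^2\) and \(\PP(1,2,3)\) underlying Theorems~\ref{thm:f4} and~\ref{thm:links-chazy} yields a birational involution \(\sigma\) of \(\PP(1,2,3)\) preserving the Chazy~V foliation.

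I would then lift \(\sigma\) to a birational self-map \(\Phi=(\phi_0,\phi_1,\phi_2)\) of \(\CC^3\), equivariant for the action (\ref{C3-quasi}) and with \(\phi_i\) weighted-homogeneous of degree \(i+1\); such a lift is unique up to composition with a scaling \((x,y,z)\mapsto(\lambda x,\lambda^2y,\lambda^3z)\), which corresponds precisely to the symmetry \(x(t)\mapsto\lambda x(\lambda t)\) of (\ref{eq:chazy_v}). Composing the explicit maps of Theorems~\ref{thm:f4} and~\ref{thm:links-chazy} and of Proposition~\ref{prop:symf4p2}---a computer-assisted step---one identifies the first component \(\phi_0\), regarded as a rational function of \((x,x',x'')\) under the identification of \(\CC^3\) with the space of \(2\)-jets, with \(F:=(x^3-3xx'+x'')/(x^2-x')\). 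That \(F\) is again a solution of (\ref{eq:chazy_v}) means concretely that \(F'''=2FF''+4(F')^2-2F^2F'\) along every solution \(x(t)\); and, \(\sigma\) being of order two, that \(F\circ F=x\). After using (\ref{eq:chazy_v}) and its first two derivatives to eliminate \(x'''\), \(x^{(4)}\) and \(x^{(5)}\), and clearing denominators, both become polynomial identities in \(x,x',x''\); for instance one first obtains \((x^2-x')^2F'=-x^4x'+6x^2(x')^2-4xx'x''+(x'')^2-(x')^3\).

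The one genuinely delicate point is to pass from the statement that \(\sigma\) is a symmetry of the \emph{foliation} on \(\PP(1,2,3)\) to the statement that it yields a symmetry of the Chazy~V equation \emph{in the same independent variable}: equivalently, that the multiplier \(h\) in \(\Phi_*W=hW\), where \(W=y\,\indel{x}+z\,\indel{y}+P\,\indel{z}\), can, after the right normalization within the \(\CC^*\)-family of lifts, be taken identically equal to \(1\), rather than being a nonconstant rational function of the coordinates of \(\PP(1,2,3)\), which would force a reparametrization of \(t\). I expect this to follow from the finiteness of \(\mathrm{Bir}(\mathcal{F}_4)\) together with the fact that \(\sigma\) fixes the invariant curve underlying the link---so that it preserves \(W\) itself up to a constant factor, which the scaling absorbs---but in any case it is guaranteed by the explicit verification above, which is the only laborious part of the argument.
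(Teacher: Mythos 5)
Your proposal is correct and follows essentially the paper's own route: the involution (\ref{eq:inv.S4}) of \(\mathcal{F}_4\) is transported through the Chazy~V correspondence to the birational involution (\ref{eq:inv_f4_p123}) of \(\PP(1,2,3)\), which is then promoted to a map of \(\CC^3\) commuting with the Chazy~V vector field, the theorem being ultimately settled by the direct computation that both you and the paper treat as routine (your intermediate identity for \((x^2-x')^2F'\) indeed reproduces the second component of (\ref{eq:inv_f4_p123})). One caveat: a lift with weighted-homogeneous components of degrees \(1,2,3\) is \emph{not} unique up to the scaling (\ref{C3-quasi})---any denominator \(\alpha x^2+\beta y\) yields another such lift, the ambiguity being by degree-zero rational multipliers---so the normalization must be fixed by compatibility with \(W\) (the paper imposes \(W(P/S)=Q/S^2\), forcing \(S=x^2-y\)); this is exactly the ``delicate point'' you flag, and your explicit verification does close it.
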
	

The foliation \(\mathcal{F}_6\) may be obtained as a quotient of~\(\mathcal{F}_3\) (this will be explained in Section~\ref{sobref6}). This fact, together with Theorem~\ref{thm:links-chazy}, will be the basis of a relation between the solutions of the associated Chazy equations:
\begin{theorem} \label{thm:iv_to_vi} Let \(x(t)\) be a solution to the reduced Chazy IV equation (\ref{eq:chazy_iv}). Then
	\[\frac{xx''-2(x')^2}{x''-xx'}\]
is a solution to the reduced Chazy VI equation (\ref{eq:chazy_vi}), and every solution to the latter may be obtained in this form. The solution to the reduced Chazy VI equation obtained from \(x(t)\) coincides with the solution obtained from~(\ref{ChIVflop}).
\end{theorem}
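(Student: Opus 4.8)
The plan is to read the statement off from the realization of $\mathcal{F}_6$ as a quotient of $\mathcal{F}_3$ in Section~\ref{sobref6}, combined with Theorem~\ref{thm:links-chazy}. Let $W=y\del{x}+z\del{y}+P(x,y,z)\del{z}$ be the vector field on $\CC^3$ attached to the reduced Chazy~IV equation (\ref{eq:chazy_iv}). For any rational function $f(x,y,z)$ of weighted degree one, the substitution $x\mapsto f(x,x',x'')$ induces on $\PP(1,2,3)$ the rational map $[x:y:z]\mapsto[f:Wf:W^2f]$ (well defined because the three components have weights $1,2,3$), and this map conjugates the foliation of (\ref{eq:chazy_iv}) to the foliation of the third-order equation for $u=f$ in which $u'=Wf$, $u''=W^2f$ and $u'''=W^3f$; indeed, along any solution curve $\gamma(t)=(x(t),x'(t),x''(t))$ of $W$ one has $(f\circ\gamma)^{(k)}=(W^kf)\circ\gamma$ for every $k$. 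Now the composition of the quotient map $\mathcal{F}_3\dashrightarrow\mathcal{F}_6$ with the birational equivalences of Theorem~\ref{thm:links-chazy} is a dominant rational map $\PP(1,2,3)\dashrightarrow\PP(1,2,3)$ of exactly this shape; an explicit computation identifies its weight-one component as $f=(xz-2y^2)/(z-xy)$, and the first assertion of the theorem becomes the quasihomogeneous identity
\[ W^3 f \;=\; f\cdot W^2 f + 5\,(Wf)^2 - f^2\,Wf \qquad\text{on }\CC^3, \]
which says exactly that the conjugated equation is the reduced Chazy~VI equation (\ref{eq:chazy_vi}). This identity holds because the map above conjugates the two foliations, and can also be checked directly by differentiating $f$ three times along $W$.

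For the remaining assertions: the quotient in Section~\ref{sobref6} is taken by the group generated by the foliated flop, an involution, so the lifted map $F=(f,Wf,W^2f)\colon\CC^3\dashrightarrow\CC^3$ is dominant and generically two-to-one. The substitution (\ref{ChIVflop}) is a symmetry of (\ref{eq:chazy_iv}) by Theorem~\ref{thm:inv-chazy_iv}, so its prolongation $S$ to $\CC^3$ preserves $W$; and since (\ref{ChIVflop}) is precisely the flop being quotiented out, one has $f\circ S=f$, hence $F\circ S=F$ and the generic fibers of $F$ are the pairs $\{p,S(p)\}$. Together with the dominance of $F$ and the uniqueness of solutions of an ordinary differential equation with prescribed initial jet, this yields that every solution of the reduced Chazy~VI equation is $f(x,x',x'')$ for some solution $x(t)$ of the reduced Chazy~IV equation, and that the two Chazy~IV solutions producing a fixed Chazy~VI solution are $x(t)$ and the one in (\ref{ChIVflop}).

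The step I expect to be most laborious is bookkeeping rather than conceptual: fixing the rational map $\PP(1,2,3)\dashrightarrow\PP(1,2,3)$ in the normalization compatible with Theorems~\ref{thm:f3}, \ref{thm:f6} and~\ref{thm:links-chazy}, so that its weight-one component comes out exactly as the stated $f$ with the correct constants, and, for a self-contained verification, carrying out the three successive $W$-derivatives of $f$ in order to check both the Chazy~VI identity above and the invariance $f\circ S=f$ — routine but lengthy manipulations of rational functions in three variables.
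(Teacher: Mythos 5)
Your proposal is correct and follows essentially the same route as the paper: the paper likewise uses Remark~\ref{rmk:f3-f6} together with the explicit maps $\Phi_3$ and $\Phi_6$ to produce the two-to-one self-map of $\PP(1,2,3)$ sending $\mathcal{G}_{\mathrm{IV}}$ to $\mathcal{G}_{\mathrm{VI}}$, promotes it (exactly as for Theorems~\ref{thm:inv-chazy_iv} and~\ref{thm:inv-chazy_v}) to a rational self-map of $\CC^3$ whose weight-one component is the stated substitution, and leaves the final identity to an omitted direct calculation. Your additional remarks on the fibers $\{p,S(p)\}$, the invariance $f\circ S=f$, and uniqueness of solutions with prescribed initial jet only make explicit what the paper leaves implicit, so there is no substantive difference in method.
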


The models of Theorems~\ref{thm:f3}, \ref{thm:f4} and \ref{thm:f6} are all degree-two foliations, and, in this sense, have minimal complexity among all possible birational models on \(\PP^2\) for the corresponding foliations. In the next result, we present another degree-two planar model for \(\mathcal{F}_3\), not linearly equivalent to that of Theorem~\ref{sec:automorphisms}, in which all the singularities are non-degenerate, and for which its birational automorphisms of order three are linear---they are cubic in Pereira's model~(\ref{eq:f3}). In this model, Brunella's flop is represented by a de Jonqui\`eres involution of degree four.

\begin{theorem}\label{f3} Brunella's very special foliation $\mathcal{F}_3$ can be represented in \(\PP^2\) as the degree-two foliation \(\mathcal{J}\) defined by the vanishing of 
\begin{equation}\label{eq:nosso_f3} \Omega= y z ( x + y - 2 z) \,\dd x + x z (y + z - 2 x)\, \dd y + x y ( z + x - 2 y)\, \dd z .\end{equation}
Its set of invariant algebraic curves is composed by the nodal cubic $ x y^2 + y z^2 + z x^2 -3 x y z = 0 $, representing the link, and by the three coordinate lines, which are tangent to the latter. It is the only degree-two foliation of \(\PP^2\) leaving invariant this configuration of curves. Its group of birational automorphisms is generated by the quartic de Jonqui\`eres involution 
\begin{equation}\label{J4}J_4 (x: y: z) = (y (y-z) (z-x)^2 : x (x-y) (y-z)^2 : z (z-x) (x-y)^2 ),\end{equation}
and by the cyclic permutation of the coordinates. 
\end{theorem}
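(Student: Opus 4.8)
The plan is to produce the foliation $\mathcal{J}$ by an explicit birational change of coordinates applied to a model already under control, and then verify its stated properties directly. By Theorem~\ref{thm:links-chazy} (or by the definition of $\mathcal{F}_3$ as a quotient), $\mathcal{F}_3$ is birationally equivalent to the foliation on $\PP(1,2,3)$ coming from reduced Chazy~IV, and Theorem~\ref{thm:f3} gives Pereira's degree-two model $\mathcal{H}_3$ on $\PP^2$; so the natural route is to exhibit a birational map $\varphi\colon\PP^2\dashrightarrow\PP^2$ carrying $\mathcal{H}_3$ (or the $\PP(1,2,3)$-model) to the foliation defined by $\Omega$. First I would normalize the target configuration: the nodal cubic $xy^2+yz^2+zx^2-3xyz=0$ is the projectively homogeneous ``triangle-symmetric'' nodal cubic whose node is at $(1:1:1)$ and which is tritangent to the three coordinate lines; one checks by a direct computation that $\Omega$ vanishes on each coordinate line and on this cubic, that the node $(1:1:1)$ is the unique singularity of $\mathcal{J}$ along the cubic and is of reduced non-degenerate type, and hence that this cubic is a link with $C^2=3$ (after resolving, its strict transform has the self-intersection forced by Santos's classification). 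The cyclic symmetry of $\Omega$ under $(x:y:z)\mapsto(y:z:x)$ is manifest, giving one order-three birational (indeed linear) automorphism for free.

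The key step is uniqueness: I would show that the space of degree-two foliations on $\PP^2$ leaving invariant the coordinate lines and this nodal cubic is one-dimensional, hence consists of $\mathcal{J}$ alone. A degree-two foliation is given by a twisted $1$-form $\eta=A\,\dd x+B\,\dd y+C\,\dd z$ with $A,B,C$ homogeneous of degree $3$ and $xA+yB+zC=0$; imposing tangency to each of $x=0,y=0,z=0$ forces $A\in(x)$ and (after using Euler's relation) the symmetric divisibility seen in $\Omega$, cutting the parameter space down substantially, and imposing tangency to the cubic pins down the remaining coefficients up to scale. This is a finite linear-algebra computation; I expect it to be the main obstacle only in bookkeeping, not in substance, and it simultaneously certifies that $\mathcal{J}$ really is (a model of) $\mathcal{F}_3$, because $\mathcal{F}_3$'s own planar realization must leave such a configuration invariant and there is only one candidate. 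As a cross-check one can verify $\mathcal{J}$ has no other invariant algebraic curve (e.g.\ by a bounded-degree search, or by transporting Pereira's model, whose invariant set is known, through $\varphi$), and that all singularities of $\mathcal{J}$ are non-degenerate --- a direct local computation at the finitely many singular points, including the node of the cubic and the vertices of the coordinate triangle.

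Finally I would compute the full group of birational automorphisms. Using Theorem~\ref{thm:bir_aut}, this group is isomorphic to $S_3$, so it suffices to exhibit an involution not lying in the cyclic group generated by coordinate permutation and to check that together they generate a group of order six. I would verify by substitution that the quartic de Jonqui\`eres map $J_4$ of~(\ref{J4}) preserves $\Omega$ up to a multiple (i.e.\ pulls back the foliation to itself): concretely, compute $J_4^*\Omega$ and show it equals $\Omega$ times an explicit rational factor, which reduces to a polynomial identity checkable directly. One then checks $J_4$ is an involution (compose $J_4$ with itself and simplify, using that $J_4$ fixes the coordinate triangle and the link setwise), that it does not commute with the $3$-cycle as a permutation would (so it is not in the cyclic subgroup), and hence that $\langle J_4,(x:y:z)\mapsto(y:z:x)\rangle$ has order at least six; by Theorem~\ref{thm:bir_aut} it is exactly $S_3$ and equals the whole group. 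The de Jonqui\`eres structure of $J_4$ --- one coordinate, say $z$, being preserved in the pencil of lines through $(0:0:1)$ --- can be read off from its components and records the fact that Brunella's flop here is a degree-four de Jonqui\`eres involution rather than the cubic one of Pereira's model.
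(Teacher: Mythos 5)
Your overall route---recognize the cubic as producing a link and invoke the Brunella--Santos classification, pin the foliation down by tangency to the configuration, check \(J_4^*\Omega\) is proportional to \(\Omega\) and use Theorem~\ref{thm:bir_aut} to bound the automorphism group---is essentially the paper's (its ``first proof'', plus a tangency argument for uniqueness and a direct computation for \(J_4\)). But there is a genuine error at the crucial step: in the planar model the node \((1:1:1)\) is \emph{not} the unique singularity of \(\mathcal{J}\) along the cubic. The three coordinate vertices lie on \(xy^2+yz^2+zx^2-3xyz=0\) (each coordinate line is tangent to it there) and are singular points of \(\mathcal{J}\), with eigenvalues \(1:2\); moreover the cubic has self-intersection \(9\) in \(\PP^2\). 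So the cubic is not a link in this model and the classification cannot be applied as you state it. What is needed, and what the paper does, is the birational surgery: blow up the three vertices and the infinitely near points along the branches of the cubic (six blow-ups), after which the strict transform has self-intersection \(3\) and carries a single singularity of the reduced foliation, the node, which is reduced and non-degenerate (eigenvalue ratio a primitive cubic root of unity); only then does \cite[Prop.~4.3]{BGF} (equivalently the Brunella--Santos classification) identify the foliation with \(\mathcal{F}_3\). Your parenthetical about resolving addresses the self-intersection but not these extra singularities. Your fallback argument---that ``\(\mathcal{F}_3\)'s own planar realization must leave such a configuration invariant and there is only one candidate''---is circular: uniqueness of the degree-two foliation tangent to this configuration does not show that some planar model of \(\mathcal{F}_3\) is tangent to \emph{this} configuration; for that you must actually construct the birational map you announce at the outset (the paper gives one explicitly, e.g. \(Q_2(x:y:z)=(x^2:xy:zy)\) pulls Pereira's form \(\Xi\) back to \(x^3y\cdot\Omega\)).

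Smaller points. For uniqueness the paper argues more cheaply than your linear algebra: two distinct degree-two foliations of \(\PP^2\) have tangency divisor of degree five, while the configuration has degree six, so tangency to it determines the foliation; your computation would work but is not needed. Your description of \(J_4\) as de Jonqui\`eres with respect to the pencil through \((0:0:1)\) is wrong: \(J_4\) preserves the pencil of lines through the node \((1:1:1)\) (its fixed curve is a quartic with a node there). For completeness of the list of invariant algebraic curves, a ``bounded-degree search'' requires a justification of the degree bound; transporting the known invariant set of a model of \(\mathcal{F}_3\) through an explicit birational equivalence is the clean way, but again presupposes that map. Finally, once Theorem~\ref{thm:bir_aut} gives that the full group is \(S_3\), exhibiting a nontrivial involution and the order-three permutation already generates it; the non-commutation check is superfluous.
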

We will present a complete factorization of \(J_4\) as a composition of three standard quadratic Cremona transformations in Section \ref{factorJ4}.

The foliations \(\mathcal{F}_3\), \(\mathcal{F}_4\) and \(\mathcal{F}_6\) can be characterized as quotients of linear foliations on \(\PP^2\) under the action of non-involutive monomial birational transformations (see Section~\ref{sec:automorphisms}). Involutive cases are given by the action of the standard quadratic Cremona involution, and we investigate this in Section~\ref{sec:quotcremona}. We begin by studying the quotient of the plane by the standard Cremona involution, which is identified to Cayley's nodal cubic surface, and which is birationally equivalent to the plane (a singular del Pezzo surface). As a by-product of this analysis, we obtain the following result:

\begin{theorem}\label{cremona-elliptic} Let \(\lambda\in\CC\setminus\{0,1\}\). The quotient of the degree-one foliation on \(\PP^2\) given by $\lambda YZ\,\dd X- XZ \,\dd Y+(1-\lambda )XY \,\dd Z =0$	under the action of the standard quadratic Cremona transformation \((X:Y:Z)\mapsto (YZ:ZX:XY)\), is birationally equivalent to the foliation \(\mathcal{G}_\lambda\) of degree three on \(\PP^2\) given by
\begin{multline}\label{eq:quot_lin_cremona} {y} {z} ({y}+{z}) \left\{ (\lambda +1) {x} + {y}+\lambda {z} \right\} \, \dd {x} - {x} {z} ( {x}+ {z})\left\{(\lambda -1) {x}+ (2\lambda -1) {y} +\lambda {z}\right\} \, \dd {y} +\\+ {x} {y} ( {x}+ {y})\left\{(\lambda -1) {x} - {y} +(\lambda -2) {z}\right\} \, \dd {z} = 0.\end{multline}
\end{theorem}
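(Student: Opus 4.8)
The plan is to realize the quotient $\PP^2/\sigma$, with $\sigma(X:Y:Z)=(YZ:ZX:XY)$, by an explicit rational map onto $\PP^2$, to push the linear foliation through it, and to compare the result with \eqref{eq:quot_lin_cremona}. The first point, that the degree-one foliation really descends, is a one-line check: writing $\omega_\lambda:=\lambda YZ\,\dd X-XZ\,\dd Y+(1-\lambda)XY\,\dd Z$, a direct substitution gives $\sigma^{*}\omega_\lambda=-XYZ\,\omega_\lambda$, so the foliation is $\sigma$-invariant. Conceptually this is transparent, for on the chart $Z=1$ the involution is $(x,y)\mapsto(1/x,1/y)$ and $\omega_\lambda$ is the foliation with multivalued first integral $X^{\lambda}Y^{-1}Z^{1-\lambda}$, which $\sigma$ sends to its reciprocal; inverting a first integral preserves the foliation.

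Next I would invoke the explicit birational identification of $\PP^2/\sigma$ with $\PP^2$ from the earlier part of Section~\ref{sec:quotcremona}. Composing the quotient map $\PP^2\to\PP^2/\sigma$ with that identification --- which passes through Cayley's nodal cubic surface $S\subset\PP^3$, reached by a linear system of $\sigma$-semi-invariant quartic forms and then projected birationally onto the plane, $S$ being a rational singular del Pezzo surface of degree~$3$ --- produces a dominant, generically two-to-one rational map $\Theta\colon\PP^2\dashrightarrow\PP^2$ with $\Theta\circ\sigma=\Theta$. By construction the quotient foliation is, birationally, the image $\Theta_{*}\mathcal{F}_{\omega_\lambda}$, so Theorem~\ref{cremona-elliptic} is precisely the assertion that $\Theta_{*}\mathcal{F}_{\omega_\lambda}$ is the foliation $\mathcal{G}_\lambda$ of \eqref{eq:quot_lin_cremona}.

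I would then confirm this by a pull-back: it suffices to verify that $\Theta^{*}$ of the $1$-form in \eqref{eq:quot_lin_cremona} equals $\omega_\lambda$ up to a nonzero rational factor --- a finite, if sizeable, computation, most conveniently organized by also exhibiting a rational section of $\Theta$. Two built-in checks help catch errors. First, \eqref{eq:quot_lin_cremona} is a genuine degree-three foliation: its coefficients are homogeneous of degree~$4$ with greatest common divisor $1$ (they are divisible, respectively, by $yz$, $xz$ and $xy$) and satisfy the Euler identity $xA+yB+zC\equiv0$, which reduces to a degree-two polynomial identity valid for every $\lambda$. Second, the three coordinate lines together with $x+y=0$, $y+z=0$ and $z+x=0$ are all invariant for $\mathcal{G}_\lambda$, as a direct computation shows, and these are exactly the images of the curves contracted by $\sigma$, of the exceptional curves over its three points of indeterminacy, and of the exceptional curves of the birational projection $S\dashrightarrow\PP^2$. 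Alternatively, one can push the multivalued first integral $X^{\lambda}Y^{-1}Z^{1-\lambda}$ of $\omega_\lambda$ forward through $\Theta$ and recognize it as a product of powers of (some of) those six lines, with exponents affine-linear in $\lambda$; differentiating that product then returns \eqref{eq:quot_lin_cremona}.

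I expect the bookkeeping of the third step to be the real obstacle. Pushing a foliation through a rational map of degree $>1$ forces one to cancel the factors supported on the curves $\Theta$ contracts --- equivalently, on the ramification of the double cover --- before the expression collapses to the compact factored form displayed in \eqref{eq:quot_lin_cremona}; and arranging the coordinates on the target plane so that the answer appears \emph{verbatim}, with the precise linear forms $(\lambda+1)x+y+\lambda z$, $(\lambda-1)x+(2\lambda-1)y+\lambda z$ and $(\lambda-1)x-y+(\lambda-2)z$, amounts to fixing the node of $S$ from which one projects (equivalently, a normalization of the semi-invariant quartics), and is best done by reverse-engineering it from the stated formula.
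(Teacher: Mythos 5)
Your proposal follows essentially the same route as the paper: one checks that \(Q^*\omega_\lambda=-XYZ\,\omega_\lambda\), uses the explicit generically two-to-one map \(\Pi\) of Proposition~\ref{planetoplane} (built through Cayley's nodal cubic) to realize the quotient birationally as \(\PP^2\), and concludes by the direct computation that the pull-back of the form~(\ref{eq:quot_lin_cremona}) under \(\Pi\) is proportional to \(\omega_\lambda\). The only slip is descriptive: the semi-invariant system defining the map to Cayley's cubic consists of cubics (satisfying \(P(YZ,ZX,XY)=XYZ\,P(X,Y,Z)\)), not quartics, and the return from the cubic surface to the plane is via a cubic Cremona involution of \(\PP^3\) rather than a projection from a node.
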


The article is organized in the following way. After reviewing some background material in Section~\ref{sec:background}, we recall the definition of the three special quotient foliations in Section~\ref{F3F4F6}. In Section~\ref{sec:chazy} we study the relations between the Chazy equations and the special quotient foliations, and establish Theorem~\ref{thm:links-chazy}. This will allow us to give an alternative proof of Theorem~\ref{thm:f3} in Section~\ref{equivF3}, and to prove Theorems~\ref{thm:f4} and \ref{thm:f6} in Sections~\ref{equivF4} and~\ref{equivF6}, respectively. Theorems~\ref{thm:inv-chazy_iv}, \ref{thm:inv-chazy_v} and~\ref{thm:iv_to_vi} will be established in the same section. Section~\ref{GModel} will be devoted to Theorem~\ref{f3}, and Theorem~\ref{cremona-elliptic} will be proved in Section~\ref{sec:quotcremona}. Theorem~\ref{thm:bir_aut}, whose proof is more analytic in nature, will be established in Section~\ref{sec:automorphisms}, where we will also calculate the groups of birational automorphisms of hyperbolic linear foliations of the plane (Theorem~\ref{thm:birlin}).

\section{Background material on singularities of foliations and surfaces}\label{sec:background}

\subsection{On singular holomorphic foliations}\label{background} 
We refer the reader to the first chapters of~\cite{BGF} for a detailed exposition of what follows. A singular holomorphic foliation of a smooth surface $M$ can be given by a locally finite open covering $\{U_i\}$ and local differential equations given by the vanishing of 
\[\omega_i = a_i(x_i,y_i) \,\dd x_i + b_i(x_i,y_i)\, \dd y_i,\]
where \(a_i,b_i \in \mathcal{O}(U_i)\), with \(\gcd(a_i,b_i)=1\), such that, along $U_i \cap U_j \neq \emptyset $, $\omega_i = g_{ij} \, \omega_i$ for $g_{ij}\in \mathcal{O}^*(U_i\cap U_j)$. The singularities of the foliation are given by the zeros of the forms \(\omega_i\). The conditions $\gcd(a_i,b_i)=1$ ensure that the singular set is locally finite. The foliation locally defined by the $1$-form $\omega = a(x,y)\, \dd x + b(x,y)\, \dd y $ may also be defined by its dual vector field $v=b(x,y)\, \indel{x} - a(x,y)\,\indel{y}$. A singular point of the foliation, a point where both \(a\) and \(b\) vanish, is said to be \emph{non-degenerate} if the eigenvalues of the linear part of \(v\) at this singular point are both non-zero. In such a case, if \(\lambda_1\) and~\(\lambda_2\) are these eigenvalues, the eigenvalues of the singular point are said to be \(\lambda_2:\lambda_1\) (or \(\lambda_1:\lambda_2\)). A singularity $p$ of the foliation is said to be \emph{reduced} if $v$ has a non-nilpotent linear part at \(p\), and if it either has one vanishing eigenvalue, or is non-degenerate and $\lambda_2/\lambda_1\not\in \mathbb{Q}^+$. After a finite number of blow-ups, every singularity of a foliation is replaced by finitely many reduced singularities along the exceptional divisor (Seidenberg's reduction of singularities).

The non-degenerate singularities of eigenvalues \(n:1\), with \(n\) a strictly positive integer, admit the local \emph{Poincaré-Dulac normal form}, \((ny+\epsilon x^n)\,\dd x-x\,\dd y\), with \(\epsilon\in\{0,1\}\), for which the curve \(x=0\) is invariant. For \(\epsilon=1\), this is the only invariant curve. In particular, if there is more than one invariant curve through such a singularity, it is \emph{linearizable} (has \(\epsilon=0\) in its normal form). For \(\epsilon=0\), we have the first integral \(y/x^n\), and, with it, the invariant curves of the form \(y=cx^n\), any two of which have a contact of order \(n\). When \(n>1\), blowing up the foliation \(ny\,\dd x-x\,\dd y\) produces an invariant divisor with two singularities, one of type \(n:1-n\), and a linearizable one  of type \(n-1:1\). Blowing up the \emph{radial} foliation \(y\,\dd x-x\,\dd y\) (the above foliation in the case \(n=1\)) produces a \emph{dicritical} exceptional divisor, one that is everywhere transverse to the foliation.

Let $\nu(p)\geq 0$ be the order of the first non-trivial jet of a $1$-form $\omega = a(x,y)\, \dd x + b(x,y)\, \dd y$ defining a local foliation $\mathcal{F}$ around $p$; define $l(p,\mathcal{F}) := \nu(p)$ if $p$ is not dicritical and $l(p,\mathcal{F}) := \nu(p) +1$ if $p$ is dicritical. For example, for a reduced singular point, $\nu(p)= l(p,\mathcal{F}) =1$, and for a radial point, $\nu(p)=1$ and $l(\mathcal{F},p)=2$.

The \emph{multiplicity} (or \emph{Milnor number}) $\mu(p,\mathcal{F})$ of a singularity $p$ of the foliation $\mathcal{F}$ given by $a(x,y)\, \dd x + b(x,y)\, \dd y =0$ is the intersection multiplicity of the curves $a(x,y) =0$ and $b(x,y)=0$ at $p$. The Milnor number of a non-dicritical singularity can be computed in terms of $l(p,\mathcal{F})$ and the sum of Milnor numbers of the transformed foliation $\overline{\mathcal{F}}$ by a blow-up $\sigma$ at $p$ along $E= \sigma^{-1}(p)$ (see~\cite{BGF} p.~5): 
\begin{equation}\label{eq:minor_no_formula} \mu(p,\mathcal{F}) = l(p,\mathcal{F}) (l(p,\mathcal{F}) -1) - 1 + \sum_{q \in E} \mu(q, \overline{\mathcal{F}}).\end{equation}

For a singular holomorphic foliation of $\PP^2$, its \emph{degree} is the number of tangencies of a generic leaf of $\mathcal{F}$ and a generic projective line.

\begin{proposition}[Darboux's formula] \label{Darboux} For a singular holomorphic foliation $\mathcal{F}$ of $\PP^2$ (with finite singular set), 
\[ \deg^2(\mathcal{F}) + \deg(\mathcal{F}) + 1 = \sum_{p \in \mathrm{sing}(\mathcal{F})} \mu(p,\mathcal{F}). \] 
\end{proposition}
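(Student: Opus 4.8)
The plan is to follow the classical argument that reduces the global identity to a local residue/intersection computation on $\PP^2$, carried out in homogeneous coordinates. First I would fix a degree-$d$ foliation $\mathcal{F}$ on $\PP^2$ and represent it by a homogeneous vector field $v = P\,\partial_x + Q\,\partial_y + R\,\partial_z$ on $\CC^3$, where $P,Q,R$ are homogeneous of degree $d+1$, taken modulo the radial field $x\,\partial_x+y\,\partial_y+z\,\partial_z$, so that after normalizing we may assume $xP+yQ+zR=0$ (the radial field is tangent to the fibers of $\CC^3\setminus\{0\}\to\PP^2$). Equivalently, in an affine chart $z=1$, the foliation is given by a polynomial $1$-form $A(x,y)\,\dd x + B(x,y)\,\dd y$ whose coefficients have degree at most $d+1$, with the degree-$(d+1)$ parts controlled by a single homogeneous polynomial of degree $d$ (the ``tangency with the line at infinity'' term). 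The key global input is that the singular scheme of $\mathcal{F}$, defined in each chart by the ideal $(A,B)$, has length equal to $\sum_p \mu(p,\mathcal{F})$ by definition of the Milnor number as an intersection multiplicity; the content of the proposition is that this total length is $d^2+d+1$.

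The second step is the actual count. I would compute the number of singularities (with multiplicity) in the affine chart $z=1$ as the intersection number of the two affine curves $A=0$ and $B=0$; by Bézout this is at most $(d+1)^2$, with the defect coming from intersections ``at infinity'', i.e. common projective zeros of the top-degree forms $A_{d+1}$ and $B_{d+1}$. Because $xP+yQ+zR=0$ and $\gcd$-conditions hold, the homogeneous leading parts $A_{d+1}, B_{d+1}$ share a common factor exactly of the expected shape, and one finds that the two projectivized curves meet the line at infinity in a scheme of length $2d+1$ (this is where the condition on the top-degree part, encoded by the degree-$d$ ``infinity polynomial'', enters). Subtracting, the affine part contributes $(d+1)^2-(2d+1)=d^2$ singularities; adding the singularities lying on the line at infinity, whose total Milnor number is computed by the same mechanism to be $d+1$, yields $d^2+(d+1)=d^2+d+1$. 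An alternative, cleaner route is to interpret the singular scheme as the zero scheme of a section of a rank-two bundle on $\PP^2$ — namely $T\PP^2 \otimes \mathcal{O}(d-1)$, since $\mathcal{F}$ corresponds to a nonzero map $\mathcal{O}(1-d)\to T\PP^2$ — and then $\sum_p \mu(p,\mathcal{F}) = c_2\bigl(T\PP^2\otimes\mathcal{O}(d-1)\bigr)$; evaluating this Chern class via $c(T\PP^2)=(1+h)^3$ and the splitting-principle formula for a twist gives $c_2 = 1 + 3(d-1) + (d-1)^2 = d^2+d+1$ directly.

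I expect the main obstacle to be the careful handling of the singularities at infinity in the elementary (Bézout) approach: one must verify that $A_{d+1}$ and $B_{d+1}$ do in fact have a common factor of the predicted degree, that what remains is reduced/transverse enough for the count to come out exactly, and that the Milnor numbers of the at-infinity singularities add up as claimed rather than merely bounding them. This requires unwinding the relation $xP+yQ+zR=0$ at the level of leading forms and is the one genuinely computational lemma. The bundle-theoretic argument sidesteps this entirely, at the cost of invoking the identification of the singular scheme with the zero scheme of a twisted tangent-bundle section and the compatibility of scheme-theoretic length with the local Milnor number — facts that are standard in the foliation literature and presumably available from the references cited in Section~\ref{background}. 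I would therefore present the Chern-class computation as the main proof and remark on the Bézout interpretation for intuition.
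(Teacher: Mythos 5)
The paper does not prove this statement: Proposition~\ref{Darboux} is quoted as classical background (Darboux's theorem, standard in the foliation literature, e.g.\ Brunella's book cited as \cite{BGF}), so there is no in-paper argument to compare yours against. Your main route --- viewing a degree-$d$ foliation as a section of $T\PP^2\otimes\mathcal{O}(d-1)$, identifying $\sum_p\mu(p,\mathcal{F})$ with the length of its zero scheme, and computing $c_2$ --- is exactly the standard proof and is sound. One arithmetic slip: since $c(T\PP^2)=(1+h)^3$ gives $c_2(T\PP^2)=3h^2$ (not $1$), the twisting formula reads $c_2\bigl(T\PP^2\otimes\mathcal{O}(d-1)\bigr)=3+3(d-1)+(d-1)^2=d^2+d+1$; the expression $1+3(d-1)+(d-1)^2$ you wrote equals $d^2+d-1$, even though you stated the correct final value.

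On the Bézout sketch: as written, the bookkeeping ($d^2$ affine singularities plus $d+1$ at infinity, with a length-$(2d+1)$ intersection at infinity) corresponds to the case where the line at infinity is invariant; in a chart whose line at infinity is non-invariant and free of singularities, the affine vector field is $(p+xg)\,\partial_x+(q+yg)\,\partial_y$ with $g$ homogeneous of degree $d$, the leading forms $xg$ and $yg$ share the factor $g$, the defect at infinity is $d$, and the affine count alone is $(d+1)^2-d=d^2+d+1$. So the "common factor of the predicted degree'' you flag as the delicate lemma does need the case distinction (line at infinity invariant or not), which is precisely why deferring to the Chern-class computation as the main proof, as you propose, is the right call.
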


The next proposition (see~\cite[Lemma~1]{MePe}, or \cite[Lemma 16]{ACFLl}), will be used for understanding the effect on foliations of the building blocks of Cremona maps:

\begin{proposition} \label{prop:multbir} 
Let $Q(x:y:z)= ( y z: x z: x y)$ be the standard quadratic Cremona map. Let $p_1$, $p_2$ and $p_3$ be the vertices of the coordinate triangle $x y z = 0$. Let $\mathcal{F}$ be a foliation of the plane of degree $\deg(\mathcal{F})$, with $l(p_i,\mathcal{F}) \geq 0$ for every \(i\). Let $\overline{\mathcal{F}}$ be the transformed foliation of $\mathcal{F}$ under $Q$ (with finite singular set). Then, 
\begin{align*} \deg(\overline{\mathcal{F}}) &= 2 \deg(\mathcal{F}) + 2 - \sum_{i=1}^3 l(p_i,\mathcal{F}), \\ l(p_i,\overline{\mathcal{F}}) & = \deg(\mathcal{F}) + 2 - l(p_j,\mathcal{F}) - l(p_k,\mathcal{F}),\quad i \neq j \neq k. 
\end{align*}
\end{proposition}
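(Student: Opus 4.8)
The plan is to resolve the quadratic transformation $Q$ by blowing up its three base points and to transport $\mathcal F$ across the resulting diagram, reading off $\deg(\overline{\mathcal F})$ and the numbers $l(p_i,\overline{\mathcal F})$ from the way the canonical bundle of a foliation changes under a blow-up. Let $\beta\colon X\to\PP^2$ be the blow-up at $p_1,p_2,p_3$, with exceptional divisors $E_1,E_2,E_3$, so that $\mathrm{Pic}(X)=\ZZ H\oplus\ZZ E_1\oplus\ZZ E_2\oplus\ZZ E_3$ with $H=\beta^*\mathcal O_{\PP^2}(1)$. Because $Q$ has simple base points $p_1,p_2,p_3$ and is given by the net of conics through them, the rational map $Q\circ\beta$ is in fact a morphism $\beta'\colon X\to\PP^2$ that contracts exactly the strict transforms $\tilde L_i=H-E_j-E_k$ of the three sides $L_i=\overline{p_jp_k}$ of the coordinate triangle, sending $\tilde L_i$ to $Q(L_i)=p_i$, and with $(\beta')^*\mathcal O_{\PP^2}(1)=2H-E_1-E_2-E_3$. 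Writing $\mathcal G:=\beta^*\mathcal F$ for the reduced pull-back, from $Q=\beta'\circ\beta^{-1}$ one obtains $\overline{\mathcal F}=\beta'_*\mathcal G$, and hence also $\mathcal G=(\beta')^*\overline{\mathcal F}$, pull-back and push-forward of a foliation being mutually inverse along a birational morphism. The standing hypotheses---$l(p_i,\mathcal F)\ge 0$, which always holds, and $\mathrm{sing}(\overline{\mathcal F})$ finite---are exactly what these identities need.

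The tool I would then use is the transformation rule for the canonical bundle of a foliation under a single blow-up $\sigma$ of a surface at a point $p$, with exceptional divisor $E$: $K_{\sigma^*\mathcal G}=\sigma^*K_{\mathcal G}-\bigl(l(p,\mathcal G)-1\bigr)E$ (equivalently $N_{\sigma^*\mathcal G}=\sigma^*N_{\mathcal G}-l(p,\mathcal G)E$). On $\PP^2$, moreover, $K_{\mathcal F}=\mathcal O(\deg(\mathcal F)-1)$. Applying the rule at the three distinct points $p_1,p_2,p_3$---once along $\beta$, once along $\beta'$, using that $\beta'$ contracts $\tilde L_i$ onto $p_i$ and that $(\beta')^*\mathcal O(1)=2H-E_1-E_2-E_3$---one gets two expressions for the same class in $\mathrm{Pic}(X)$:
\[K_{\mathcal G}=(\deg(\mathcal F)-1)\,H-\sum_{i=1}^3\bigl(l(p_i,\mathcal F)-1\bigr)E_i,\]
\[K_{\mathcal G}=(\deg(\overline{\mathcal F})-1)(2H-E_1-E_2-E_3)-\sum_{i=1}^3\bigl(l(p_i,\overline{\mathcal F})-1\bigr)\tilde L_i.\]
Substituting $\tilde L_i=H-E_j-E_k$ in the second expression and equating the coefficients of $H$ and of each $E_i$ with those of the first, the $H$-coefficient together with the sum of the three $E_i$-coefficients form a small linear system in $\deg(\overline{\mathcal F})$ and $\sum_i l(p_i,\overline{\mathcal F})$ whose solution is the degree formula $\deg(\overline{\mathcal F})=2\deg(\mathcal F)+2-\sum_i l(p_i,\mathcal F)$; feeding this back into the individual $E_i$-equations yields $l(p_i,\overline{\mathcal F})=\deg(\mathcal F)+2-l(p_j,\mathcal F)-l(p_k,\mathcal F)$ for $\{i,j,k\}=\{1,2,3\}$.

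The geometry of the resolution and the final linear algebra are routine; the one step demanding care is the transformation rule---pinning down its sign and, above all, verifying that the coefficient is uniformly $l(p,\mathcal G)-1$ across the dicritical and the non-dicritical cases (this uniformity is precisely why $l$, rather than the algebraic multiplicity $\nu$, is the correct bookkeeping device; the radial case $l=2$ and a non-degenerate singularity $l=1$ are convenient consistency checks against~(\ref{eq:minor_no_formula}) and Darboux's formula~(\ref{Darboux}) via Baum--Bott). A purely computational alternative would bypass all of this by pulling back a defining $1$-form $A\,\dd x+B\,\dd y+C\,\dd z$ of $\mathcal F$ by $Q$ and determining the orders of vanishing of the resulting coefficients along the three coordinate lines by means of the Euler relation $xA+yB+zC=0$---the degree drop gives the first formula and the vertex orders the second---but the birational argument above has the advantage of making transparent why the two invariants are linked in this way.
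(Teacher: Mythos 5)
Your argument is correct, and it is worth noting that the paper does not prove Proposition~\ref{prop:multbir} at all: it quotes it from \cite{MePe} (Lemma~1) and \cite{ACFLl} (Lemma~16), so there is no internal proof to compare against. Your route --- resolve \(Q\) through \(\beta,\beta'\colon X\to\PP^2\), express \(K_{\mathcal G}\) in \(\mathrm{Pic}(X)\) once via each morphism using \(K_{\mathcal F}=\mathcal O_{\PP^2}(\deg\mathcal F-1)\) and the blow-up rule, and equate coefficients --- is exactly the standard mechanism behind those references, and all the delicate points check out: the rule is indeed \(K_{\tilde{\mathcal F}}=\sigma^*K_{\mathcal F}-(l(p)-1)E\) (equivalently \(N_{\tilde{\mathcal F}}=\sigma^*N_{\mathcal F}-l(p)E\)), with your sign, uniformly in the dicritical and non-dicritical cases and also at regular points where \(l=0\) --- this uniformity is precisely why the statement is phrased in terms of \(l\) rather than \(\nu\); the identifications \(\tilde L_i=H-E_j-E_k\mapsto p_i\) and \((\beta')^*\mathcal O(1)=2H-E_1-E_2-E_3\) are right; and the linear system you describe (the \(H\)-coefficient plus the sum of the \(E_i\)-coefficients, then the individual \(E_i\)-equations) does close to give exactly the two stated formulas, as one can confirm against the radial pencil (\(d=0\), \(l(p_1)=2\)) and the generic case (\(\deg\overline{\mathcal F}=2d+2\), \(l(p_i,\overline{\mathcal F})=d+2\)). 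The only caveat is that you take the blow-up transformation rule as known; since it is standard (it is the content of the blow-up formulas in Chapter~2 of \cite{BGF}, stated there separately for the dicritical and non-dicritical cases), this is a citation, not a gap.
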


A foliation on \(\PP^2\) of degree \(d\) may be given by a polynomial homogeneous vector field on \(\CC^3\) of degree \(d\), or by a homogeneous polynomial \(1\)-form \(\Omega\) on \(\CC^3\), of degree \(d+1\) such that, for the Euler vector field \(E=x\,\indel{x}+y\,\indel{y}+z\,\indel{z}\), \(\Omega(E)= 0\); this is, if \(\Omega=A\,\dd x+B\,\dd y+C\,\dd z\), the relation \(xA+yB+zC=0\) holds. Every such polynomial homogeneous \(1\)-form \(\Omega\) satisfies the Frobenius integrability condition \(\Omega\wedge \dd\Omega=0\). 
 
For a curve \(C\) in \(\PP^2\) defined by the homogeneous polynomial \(g\), and a foliation on \(\PP^2\) defined by the homogeneous \(1\)-form \(\Omega\) on \(\CC^3\), \(C\) will be invariant by the foliation if and only if there exists a homogeneous $2$-form $\Theta$ such that
\(\dd g \wedge \Omega = g \Theta\). For a given homogeneous polynomial \(g\), the above condition on the space of homogeneous \(1\)-forms \(\Omega\) of a given degree is a linear one.

\subsection{The Klein surface singularities of type \(A_{n}\)} \label{sec:klein}

We follow \cite[Sect.~IV]{delaHarpe-Klein} for the discussion that follows. Let \(n\geq 1\). Consider the analytic space 
\[A_n=\{(x,y,z)\in \CC^3\mid z^{n+1}=xy\}.\]
Let \(\beta\) be a primitive \((n+1)\)-th root of unity, and let \(C_{n+1}\subset\mathrm{GL}(2,\CC)\) be the group generated by \((s,t)\mapsto (\beta^{-1} s,\beta t)\). The analytic map \(\phi_{n+1}:\CC^2/C_{n+1}\to A_{n}\) given by \(\phi_{n+1}(s,t)= (s^{n+1},t^{n+1},st)\) realizes an analytic equivalence between \(\CC^2/C_{n+1}\) and~\(A_{n}\). 

The minimal desingularization of \(A_{n}\) may be given as follows. Consider \(n+1\) copies \(R_0,\ldots, R_{n}\) of \(\CC^2\), with coordinates \((u_i,v_i)\) on \(R_i\), glued by the functions
\(\varphi_{k-1}:R_{k-1}\dashrightarrow R_{k}\)
given by
\[(u_k,v_k)=\varphi_{k-1}(u_{k-1},v_{k-1})=(u_{k-1}^2v_{k-1},u_{k-1}^{-1}),\]
for \(k=1,\ldots,n\). This gluing defines a manifold \(M_{n}\), and the mappings \(\rho_k:R_k\to A_{n}\),
\[\rho_k(u_k,v_k)=(u_k^{n-k+1}v_k^{n-k},u_k^{k}v_k^{k+1},u_k v_k),\]
define a global map \(\rho:M_{n}\to A_{n}\), the minimal resolution of~\(A_{n}\).

The exceptional divisor of \(\rho\) is a chain of \(n\) smooth compact rational curves \(C_1, \ldots, C_{n}\), with \(C_i^2=-2\), where \(C_i\) intersects \(C_{i+1}\) transversely at one point, and \(C_i\cap C_j=\emptyset\) if \(|i-j|\geq 2\). This combinatorics characterizes \(A_{n}\), in the sense that the contraction of a chain of rational curves in a surface having it is analytically equivalent to \(A_{n}\) \cite[Thm.~5.1, Ch.~III]{bhpv}.

Holomorphic actions of finite groups are holomorphically linearizable in a neighborhood of a fixed point, and thus \(A_{n}\) gives the local model for the quotient of the action of \(\ZZ_{n+1}\) generated by a transformation that, at a fixed point, has eigenvalues \(\beta\) and \(\beta^{n}\) \cite[Thm.~5.4, Ch.~III]{bhpv}.

\subsection{A weighted projective plane and the standard one}\label{sec:p123}

The quotient of \(\CC^3\setminus\{0\}\) under the action of the weighted homotheties (\ref{C3-quasi}) is the \emph{weighted projective plane} \(\PP(1,2,3)\). The class of \((x,y,z)\in\CC^3\setminus\{0\}\) in \(\PP(1,2,3)\) will be denoted by \([x:y:z]\).\footnote{We draw the reader's attention towards the systematic use, throughout this article, of the notation $[x:y:z]$ for points in the weighted projective plane \(\PP(1,2,3)\), and of $(x:y:z)$ for points in the standard projective plane~\(\PP^2\).} The weighted projective plane \(\PP(1,2,3)\) is covered by three charts, each one of which is either \(\CC^2\), or its quotient under the action of a finite linear group:
\begin{itemize}
\item the chart \((y,z)\mapsto [1:y:z]\) is injective;
\item the chart \((x,z)\mapsto [x:1:z]\) is injective up to the action of \((x,z)\mapsto (-x,-z)\);
\item the chart \((x,y)\mapsto [x:y:1]\) is injective up to the action of \((x,y)\mapsto (\omega x, \omega^2 y)\), with \(\omega\) a primitive cubic root of unity.
\end{itemize}
The plane \(\PP(1,2,3)\) is thus a normal analytic space, with two singular points: \(p_1=[0:1:0]\), of type \(A_{1}\), and \(p_2=[0:0:1]\), of type \(A_{2}\). It is birationally equivalent to \(\PP^2\): the identification between affine charts \(j([1:y:z])=(1:y:z)\), given in quasihomogeneous coordinates, for \(x\neq 0\), by
\[j([x:y:z])=j\left(\left[1:\frac{y}{x^2}:\frac{z}{x^3}\right]\right)=\left(1:\frac{y}{x^2}:\frac{z}{x^3}\right)=(x^3:xy:x^2z),\] 
extends to the birational map \(j:\PP(1,2,3)\dashrightarrow \PP^2\) 
\begin{equation}\label{eq:birp123top2}
[x:y:z]\mapsto (x^3:xy:z),\end{equation} 
having inverse \((X:Y:Z)\mapsto[X:XY:X^2Z]\). 
A factorization of this birational map is given as follows. Consider the resolution \(\varpi:S\to\PP(1,2,3)\) of \(\PP(1,2,3)\), obtained by desingularizing the $A_1$ and $A_2$ singular points, as explained in Section~\ref{sec:klein}. The surface \(S\) has a $(-2)$-curve \(C_1\) (i.e. a smooth rational curve of self-intersection \(-2\)), corresponding to the resolution of $A_1$, and a chain of two $(-2)$-curves, \(D_1\) and \(D_2\), intersecting transversely at one point, corresponding to the resolution of $A_2$. The strict transform of the curve $\mathcal{\ell}$ given by \(x=0\) is a rational curve \(\overline{\ell}\) in \(S\) of self-intersection \(-1\), intersecting \(C_1\) and~\(D_1\) at one point each, transversely. The contractions of \(\overline{\ell}\) and of the transforms of $D_1$ and $D_2$, in this order, produce the projective plane, where the transform of $C_1$ is a straight line. See Figure~\ref{ponderadousual}.

\begin{figure}
\centering	
\includegraphics[width=0.8\textwidth]{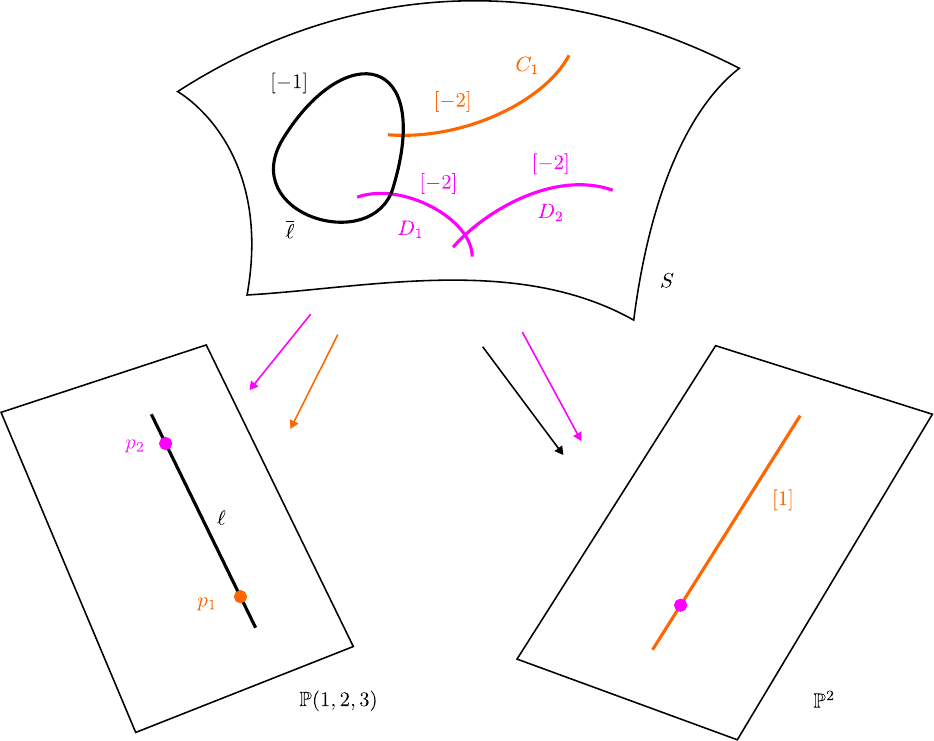}
	\caption{Birational equivalence between \(\PP(1,2,3)\) and \(\PP^2\). Numbers in square brackets denote the self-intersection of the corresponding curve.}
	\label{ponderadousual}
\end{figure}

\section{Three special quotient foliations}\label{F3F4F6} We will now present the three foliations: Brunella's very special foliation \(\mathcal{F}_3\), and Santos's foliations \(\mathcal{F}_4\) and \(\mathcal{F}_6\). The foliation \(\mathcal{F}_n\) admits a simple description as quotient of a foliation on a rational surface under the action of a cyclic group of automorphisms of order~\(n\). The surface has an invariant cycle of rational curves \(\Delta_n\) of length~\(n\), whose components are cyclically permuted by the automorphism, and which produces the link in the quotient. This is schematically presented in Figure~\ref{Z3Z4Z6}. We will also exhibit some birational symmetries of these foliations (Theorem~\ref{thm:bir_aut} will establish that there are no further ones).

\begin{figure}
\centering
\includegraphics[width=0.9\linewidth]{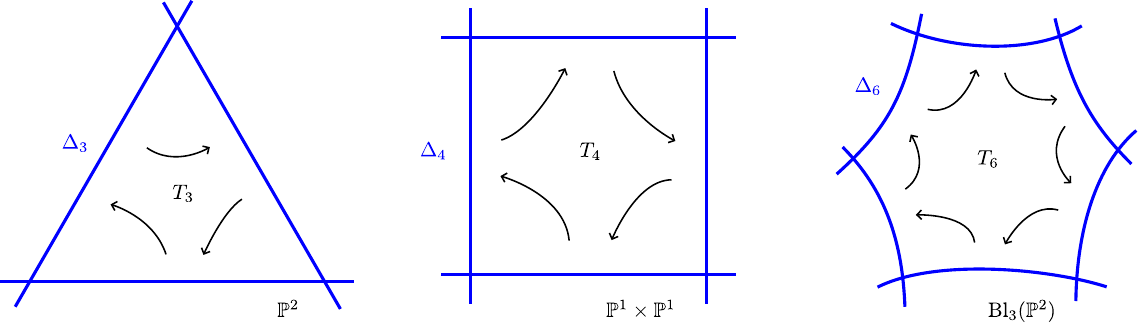}
\caption{The cycles of rational curves and the effect on them of the automorphisms of order three, four and six, respectively. } \label{Z3Z4Z6}
\end{figure}

\subsection{Brunella's very special foliation, $\mathcal{F}_3$}\label{sobref3} The foliation and its characterizing birational involution first appeared in~\cite{BrMinimal}; it is discussed in detail in~\cite[Ch.~4, Sect.~2]{BGF}. Let \(\omega\) be a primitive cubic root of unity. Consider the degree-one foliation \(\mathcal{E}_3\) on \(\PP^2\) given by
\begin{equation}\label{eq:forma_f3} 
\omega^2 YZ \, \dd X +\omega XZ \, \dd Y+ XY \, \dd Z = 0. \end{equation}
It is tangent to the coordinate triangle $\Delta_3: XYZ =0$, and has three reduced non-degenerate singular points at its vertices. It is preserved by the linear automorphism of order three
\begin{equation}\label{eq:sym_order3}T_3(X:Y:Z)=(Z:X:Y).\end{equation}
The action of \(T_3\) on \(\PP^2\) is not free, and the quotient \(\PP^2/T_3\) is a singular variety. The automorphism \(T_3\) has the three fixed points \((1:1:1)\), \((1:\omega:\omega^2)\) and \((1:\omega^2:\omega)\). At each one of them, the linear part of its derivative has eigenvalues \(\omega\) and \(\omega^2\), and the quotient $\PP^2/T_3$ has three singular points of type~\(A_{2}\). Consider the minimal desingularization $M_3\to \PP^2/T_3$, defined on the rational surface~\(M_3\). The foliation \(\mathcal{F}_3\), which we will call \emph{Brunella's very special foliation}, is the foliation on \(M_3\) induced by \(\mathcal{E}_3\). It has a link $C$ (in the sense of Section \ref{intro}), image of $\Delta_3$, with $C^2 = 3$. 

Both the birational involution of \(\PP^2\) given by the standard quadratic Cremona transformation 
\begin{equation}\label{eq:cremona}	Q(X:Y:Z)=(YZ:ZX:XY),\end{equation}
and the linear symmetry of order three
\begin{equation}\label{eq:sym_bru_3}	
S(X:Y:Z)=(X:\omega Y:\omega^2Z),\end{equation}
preserve the foliation \(\mathcal{E}_3\), and commute with \(T_3\). They induce birational symmetries of \(\mathcal{F}_3\) on $\PP^2/T_3$, and, since \(Q\circ S\circ Q=S^{-1}\), they generate a group of birational automorphisms of \(\mathcal{F}_3\), of order six, isomorphic to the group of permutations in three symbols~\(S_3\) (Theorem~\ref{thm:bir_aut} will establish that these are all of its birational automorphisms).

The birational involution of \(\mathcal{F}_3\) associated to the Cremona involution (\ref{eq:cremona}) will be called \emph{Brunella's foliated flop}. It can be factored as follows:
\begin{itemize}
	\item first, a blow-up \(\sigma\) of the node \(p\) of the link \(C\) transforms it into a curve \(\overline{C}\) of self-intersection \(-1\); 
	\item then, the contraction of \(\overline{C}\) transforms the exceptional divisor \(\sigma^{-1}(p)\) into a rational curve with a node, of self-intersection~\(3\), which becomes the new link.
\end{itemize}
This is schematically presented in Figure~\ref{flopabstrato}.

\begin{figure}
\centering
\includegraphics[width=0.5\textwidth]{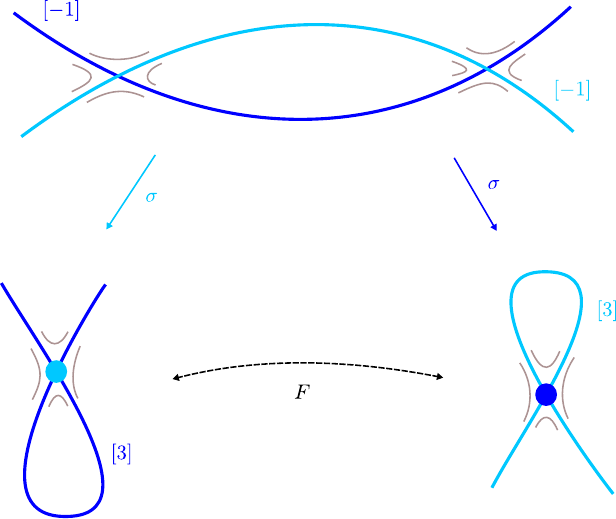} 
\caption{Brunella's foliated flop and its factorization.}\label{flopabstrato}
\end{figure}

\subsection{Santos's foliation $\mathcal{F}_4$}\label{sobref4} Consider the foliation $\mathcal{E}_4$ on \(\PP^1\times\PP^1\) given in the affine chart \((X:1,Y:1)\) by 
\begin{equation}\label{fol_arriba_f4} Y\, \dd X - \ii X \,\dd Y = 0,\end{equation}
for \(\ii= \sqrt{-1}\). It is tangent to the cycle $\Delta_4$ of four lines formed by \(\PP^1 \times \{0\}\), \(\PP^1 \times \{\infty \},\) \(\{0\} \times {\mathbb P}^1\), and \(\{\infty\} \times \PP^1\), and has reduced non-degenerate singularities at its vertices. The order-four automorphism $T_4$ of \(\PP^1\times\PP^1\) 
\begin{equation}\label{eq:sym_order4}
	T_4(X:1,Y:1)= (Y:1,1:X)
\end{equation}
permutes cyclically the four lines of $\Delta_4$, and preserves the foliation $\mathcal{E}_4$. It acts freely in a neighborhood of \(\Delta_4\). The transformation \(T_4\) has two fixed points, \((1:1,1:1)\) and \((-1:1,-1:1)\), at which the eigenvalues of the derivative of \(T_4\) are \(\ii\) and \(-\ii\). It also has an orbit of length two, formed by \((-1:1,1:1)\) and \((1:1,-1:1)\), at which the derivative of \(T^2\) has twice the eigenvalue \(-1\). The variety \((\PP^1\times\PP^1)/T_4\) has four singular points, two of type \(A_{3}\) and one of type~\(A_{1}\). The minimal  desingularization \(M_4\to (\PP^1 \times \PP^1) / T_4\) is endowed with a foliation, quotient of \(\mathcal{E}_4\). This is \emph{Santos's foliation~\(\mathcal{F}_4\)}; it has a link $C$ (in the sense of Section \ref{intro}), image of $\Delta_4$, with $C^2= 2 $ (see~\cite{Santos}).

The linear involution \(J:(\PP^1\times\PP^1)\to (\PP^1\times\PP^1)\),
\begin{equation}\label{eq:inv.S4} 
	J(X:1,Y:1)=(-X:1,-Y:1)
\end{equation} 
preserves the form (\ref{fol_arriba_f4}) and commutes with \(T_4\): it induces a birational involution of \(M_4\) that preserves \(\mathcal{F}_4\) (Theorem~\ref{thm:bir_aut} will show that this is its only non-trivial birational automorphism).

\subsection{Santos's foliation $\mathcal{F}_6$}\label{sobref6} 
Let us consider again the foliation $\mathcal{E}_3$ on $\PP^2$ given in Section~\ref{sobref3} by Eq.~(\ref{eq:forma_f3}). Let $\sigma:\mathrm{Bl}_3(\PP^2)\to\PP^2$ be the composition of the blowing-ups at the three vertices of the coordinate triangle: $(0:0:1)$, $(0:1:0)$, and $(1:0:0)$. Consider the cycle $\Delta_6$ of six $(-1)$-curves on \(\mathrm{Bl}_3(\PP^2)\) formed by the transform of the coordinate triangle of \(\PP^2\) by $\sigma$, and by the three exceptional lines of~$Q$. Denote by $\mathcal{E}_6$ the transformed foliation of $\mathcal{E}_3$ by $Q$; it is tangent to $\Delta_6$, and has reduced non-degenerate singularities at its six vertices. The linear transformation \(T_3\) of \(\PP^2\) of Eq.~(\ref{eq:sym_order3}) commutes with the Cremona involution \(Q\) in Eq.~(\ref{eq:cremona}), and \(T_3\circ Q\) is a birational automorphism of order six of \(\PP^2\) preserving~\(\mathcal{L}\). It induces a biholomorphism \(T_6\) of \(\mathrm{Bl}_3(\PP^2)\), which permutes cyclically the six rational curves of $\Delta_6$, and preserves the foliation $\mathcal{E}_6$. The automorphism \(T_6\) has a fixed point coming from \((1:1:1)\), the common fixed point of \(T_3\) and \(Q\); an orbit of order two coming from the other fixed points of \(T_3\), \((1:\omega:\omega^2)\) and \((1:\omega^2:\omega)\); and an orbit of order three, formed by the other fixed points of \(Q\), \((1:1:-1)\), \((-1:1:1)\) and \((1:-1:1)\). The quotient $\mathrm{Bl}_3(\PP^2) /T_6$ has three singular points, which turn out to be of types \(A_{5}\), \(A_{2}\) and~\(A_{1}\). On the minimal desingularization $M_6\to \mathrm{Bl}_3(\PP^2) /T_6$, we have a foliation induced by $\mathcal{E}_6$: this is \emph{Santos's foliation \(\mathcal{F}_6\)}. It has a link $C$ (in the sense of Section~\ref{intro}), image of~$\Delta_6$, with $C^2 = 1$ (see~\cite{Santos}).

\begin{remark}\label{rmk:f3-f6} Observe that, by construction, Santos's foliation \(\mathcal{F}_6\) is birationally equivalent to the quotient of \(\mathcal{F}_3\) under the action of Brunella's foliated flop.
\end{remark}

\section{The Chazy equations and the special quotient foliations }\label{sec:chazy}

In this section we will study the relation between the special quotient foliations of the previous section and the Chazy IV, V and VI equations, and prove Theorem~\ref{thm:links-chazy}. The plane models for the special quotient foliations of Theorems \ref{thm:f4} and \ref{thm:f6}, and an alternative proof of Theorem~\ref{thm:f3}, will be deduced from this. The symmetries of the special quotient foliations presented in the previous section will give symmetries for the Chazy equations, yielding Theorems \ref{thm:inv-chazy_iv} and~\ref{thm:inv-chazy_v}. Also, the relation between \(\mathcal{F}_3\) and \(\mathcal{F}_6\) of Remark~\ref{rmk:f3-f6} will give, through the relations here explored, the result stated in Theorem~\ref{thm:iv_to_vi}.

\subsection{The Chazy IV equation and Brunella's foliation \(\mathcal{F}_3\)}
In this section we will establish the part of Theorem~\ref{thm:links-chazy} concerning the birational equivalence of Brunella's very special foliation \(\mathcal{F}_3\) and the foliation on \(\PP(1,2,3)\) induced by the Chazy IV equation. As a consequence of this equivalence, we will obtain another proof of Theorem~\ref{thm:f3}. Also, through it, the symmetries of \(\mathcal{F}_3\) described in the Section~\ref{sobref3} will produce the symmetries of the Chazy IV equation featured in Theorem~\ref{thm:inv-chazy_iv}. 

\subsubsection{Chazy IV and \(\mathcal{F}_3\)}
The Chazy IV equation (\ref{eq:chazy_iv}) is given by the vector field on \(\CC^3\)
\[W=y\del{x}+z\del{y}+3(xz+y^2-x^2y)\del{z}.\]
It is quasihomogeneous with respect to the weights \(1\), \(2\) and \(3\) for \(x\), \(y\) and \(z\), respectively. The vector field induces a foliation on \(\PP(1,2,3)\) that we will denote   by~\(\mathcal{G}_{\mathrm{IV}}\). The latter may also be defined by the quasihomogeneous form 
\begin{equation}\label{eq:form_ch-iv}
3(2x^2y^2-2xyz-2y^3+z^2)\,\dd x-3(x^2-y)(xy-z)\,\dd y+(2y^2-xz)\,\dd z,\end{equation}
the form \(i_W\circ i_L(\dd x\wedge\dd y\wedge \dd z)\), with \(L=x\,\indel{x}+2y\,\indel{y}+3z\,\indel{z}\) the vector field generating the weighted homotheties~(\ref{C3-quasi}). (Here, \(i_Z\eta\) denotes the contraction of the form \(\eta\) by the vector field~\(Z\).)

The discussion that follows is based on \cite{guillot-chazy}. The vector field \(W\) has the quasihomogeneous first integral of degree three \(B=x^3-3xy+z\), and the invariant surface given by the quasihomogeneous polynomial of degree six
\[ C = 3y^2x^2-y^3-3xyz+z^2.\]
Let \(\Sigma=B^{-1}(1)\). The vector field \(W\) is tangent to \(\Sigma\), and induces on it a foliation, that we will denote by \(\widetilde{\mathcal{G}}_{\mathrm{IV}}\). Let \(\omega\) be a primitive cubic root of unity. There is an action of \(\ZZ_3\) on \(\CC^3\) given by the restriction of the weighted homotheties (\ref{C3-quasi}) to the cubic roots of unity, generated by
\begin{equation}\label{eq:action_sigma_iv} (x,y,z)\mapsto (\omega x,\omega^2 y,z).
\end{equation}		
This action preserves \(\Sigma\), and multiplies \(W\) by \(\omega\), thus preserving~\(\widetilde{\mathcal{G}}_{\mathrm{IV}}\). The quotient of \(\Sigma\) under this action is realized by the restriction to \(\Sigma\) of the quotient map \(\pi:\CC^3\setminus\{0\}\to\PP(1,2,3)\). The image of \(\Sigma\) is the complement of the curve \(B=0\) in \(\PP(1,2,3)\), and the image of \(\widetilde{\mathcal{G}}_{\mathrm{IV}}\) is the restriction of~\(\mathcal{G}_{\mathrm{IV}}\) to this image.

For the foliation \(\mathcal{E}_3\) on \(\PP^2\) described in 
Section~\ref{sobref3}, there is an action of \(\ZZ_3\) on \(\PP^2\) preserving it, given by the transformation \(T_3\) of Eq.~(\ref{eq:sym_order3}); the quotient of \(\PP^2\) under this action is a rational surface, and the induced foliation is Brunella's very special foliation~\(\mathcal{F}_3\).

In order to establish the birational equivalence between \(\mathcal{F}_3\) and \(\mathcal{G}_\mathrm{IV}\) stated in Theorem~\ref{thm:links-chazy}, we will exhibit a birational map \(\Phi_3:\PP^2\dashrightarrow \Sigma\) that maps \(\mathcal{E}_3\) to \(\widetilde{\mathcal{G}}_{\mathrm{IV}}\), and that is equivariant with respect to the actions of \(\ZZ_3\). 

Consider the linear vector field \(D_3\) on \(\PP^2\) that in the affine chart \((X:Y:1) \) reads
\begin{equation}\label{y3}(\omega-1)\left(X\del{X}-\omega Y\del{Y}\right).\end{equation}
It is tangent to the foliation \(\mathcal{E}_3\) of Eq.~(\ref{eq:forma_f3}). Consider the rational function on~\(\PP^2\)
\[f=-\frac{(\omega+1)(\omega^2 Y+\omega X+1)}{\omega^2 Y+X+\omega}.\]
A lengthy but nevertheless straightforward calculation (which we omit)  shows that, with respect to the derivation given by~\(D_3\), \(f\) is a solution to the reduced Chazy IV equation~(\ref{eq:chazy_iv}). (The above expression for \(f\) corrects the one given in \cite[p.~72]{guillot-chazy}.) The map \(\PP^2\dashrightarrow \CC^3\), given by \((X:Y:1)\mapsto (f,D_3 f,D_3^2f)\) takes thus values in a level set of \(H\); this level set is \(\Sigma\). We thus have a map \(\Phi_3:\PP^2\dashrightarrow \Sigma\). Since \(f\) satisfies the Chazy~IV equation with respect to the derivation given by \(D_3\), \(\Phi_3\) maps \(D_3\) to the restriction of \(W\) to \(\Sigma\), thus mapping \(\mathcal{E}_3\) to \(\widetilde{\mathcal{G}}_{\mathrm{IV}}\). The map \(\Phi_3\) is equivariant with respect to the action of \(\ZZ_3\) on \(\PP^2\) given by the transformation \(T_3\) of Eq.~(\ref{eq:sym_order3}), and to the action of Eq. (\ref{eq:action_sigma_iv}) on~\(\Sigma\). The map \(\Phi_3\) is also birational:  its birational inverse is, when \(\Sigma\) is parametrized by \((x,y)\mapsto (x,y,1-x^3+3xy)\), given by
\[(x,y)\mapsto \left(x^2+x-y+1:x^2+\omega ^2x-y+\omega:x^2+\omega x-y+\omega^2\right).\]
This establishes the birational equivalence between \(\mathcal{F}_3\) and \(\mathcal{G}_\mathrm{IV}\) stated in Theorem~\ref{thm:links-chazy}.

The quotient of \(\PP^2\) under the action of the cyclic permutation of variables~(\ref{eq:sym_order3}) is realized by the explicit rational map from \(\PP^2\) to \(\PP(1,2,3)\) given by \(\pi\circ\Phi_3\). Under this map, the pull-back of the form of Eq.~(\ref{eq:form_ch-iv}) is the form of Eq.~(\ref{eq:forma_f3}), the one defining the foliation~\(\mathcal{E}_3\) of Section~\ref{sobref3}.

\subsubsection{Birational symmetries of Chazy IV and \(\mathcal{G}_{\mathrm{IV}}\)}
Let us study the symmetries of \(\mathcal{G}_{\mathrm{IV}}\) induced by the symmetries of \(\mathcal{F}_3\) described in Section~\ref{sobref3}. Theorem~\ref{thm:inv-chazy_iv} will be a consequence of this.

The linear symmetries of \(\PP^2\) that induce the symmetries of \(\mathcal{F}_3\) discussed in Section~\ref{sobref3} may be conjugated by the previous transformation \(\Phi_3\) and its inverse, in order to obtain explicit birational transformations of~\(\Sigma\), which, on their turn, induce birational transformations of \(\PP(1,2,3)\) preserving~\(\mathcal{G}_\mathrm{IV}\). In this way, Brunella's foliated flop, the symmetry associated to the Cremona involution (\ref{eq:cremona}), is found to be the birational involution of \(\PP(1,2,3)\) given by 
\begin{equation}\label{eq:inv_f3_p123}[x:y:z]\mapsto[2xy-z:C:(3xy-2z)C],\end{equation}
while the birational trivolution of \(\PP(1,2,3)\) induced by (\ref{eq:sym_bru_3}) is 
\begin{equation}\label{eq:tri_f3_p123} [x:y:z]\mapsto [x^2-y:x^2y-xz+y^2:3x^4y-2x^3z-3x^2y^2+3xyz-2y^3].\end{equation}
These two generate a group of birational transformations of \(\PP(1,2,3)\) preserving~\(\mathcal{G}_{\mathrm{IV}}\), isomorphic to the group of permutations in three symbols~\(S_3\).

These symmetries are behind Theorem~\ref{thm:inv-chazy_iv}, which can be established by a direct calculation, one whose inclusion here would be of little interest. We will nevertheless explain how the expressions in Theorem~\ref{thm:inv-chazy_iv} were obtained, as well as their relation with the above symmetries.

The transformation (\ref{ChIVflop}) of Theorem~\ref{thm:inv-chazy_iv} is a lift of the involution (\ref{eq:inv_f3_p123}) to \(\CC^3\). In fact, when extending (\ref{ChIVflop}) to \(x'\) and \(x''\) based on the way in which \(x\) is transformed, and on the differential equation solved by \(x\), we obtain
\[(x,x',x'')\mapsto \left(\frac{2xx'-x''}{x'},\frac{C(x,x',x'')}{(x')^2},\frac{(3xx'-2x'')C(x,x',x'')}{(x')^3}\right),\]
which induces the transformation (\ref{eq:inv_f3_p123}) on \(\PP(1,2,3)\). In a completely analogous way, the transformation (\ref{ChIVtri}) of Theorem~\ref{thm:inv-chazy_iv} is related to the trivolution (\ref{eq:tri_f3_p123}). The group of birational automorphisms of \(\PP(1,2,3)\) generated by (\ref{eq:inv_f3_p123}) and (\ref{eq:tri_f3_p123}) can be thus promoted to a group of birational automorphisms of \(\CC^3\) that preserve the vector field associated to the reduced Chazy IV equation.

Let us explain how the transformation (\ref{ChIVflop}) was obtained. The natural lift of (\ref{eq:inv_f3_p123}) as a polynomial self-map of \(\CC^3\) has the form \((x,y,z)\mapsto (P,Q,R)\), for \(P\), \(Q\) and \(R\) quasihomogeneous polynomials of degrees three, six and nine, respectively. For every polynomial \(S\), the transformation 
\begin{equation}\label{eq:proto_iv}(x,y,z)\mapsto \left(\frac{P}{S},\frac{Q}{S^2},\frac{R}{S^3}\right)
\end{equation}	
gives a rational self-map of \(\CC^3\) that induces the transformation (\ref{eq:inv_f3_p123}) on \(\PP(1,2,3)\). If we want such a transformation to be a symmetry of the Chazy~IV equation, then, to begin with, \(S\) must be quasihomogeneous of degree two, for only in that case the components in the right-hand side of (\ref{eq:proto_iv}) have degrees one, two and three, as \(x\), \(y\) and \(z\) do: if a transformation of the form (\ref{eq:proto_iv}) preserves the Chazy IV equation, we should have that \(S=\alpha x^2+\beta y\) for some \(\alpha, \beta\in\CC\). If we impose the first necessary condition \(W(P/S)=Q/S^2\) on such \(S\), the only possibility is found to be \(S=y\). In this way we find the transformation (\ref{ChIVflop}) of Theorem~\ref{thm:inv-chazy_iv}. The transformation (\ref{ChIVtri}) is obtained in an analogous way; the remaining transformations are obtained by composing these two.

\subsubsection{Description of \(\mathcal{G}_{\mathrm{IV}}\)}
Let us analyze the invariant curves and the singularities of \(\mathcal{G}_{\mathrm{IV}}\). For calculations in the smooth chart \([1:x:y]\) of \(\PP(1,2,3)\), biholomorphic to~\(\CC^2\), one can simply restrict (\ref{eq:form_ch-iv}) to \(\{x=1\}\). For calculations in the singular charts of \(\PP(1,2,3)\), one can resort to the formulas for the desingularizations of \(A_{1}\) and \(A_2\) discussed in Section~\ref{sec:klein}. Both \(B\) and \(C\) give invariant curves for \(\mathcal{G}_{\mathrm{IV}}\), which we will denote by the same symbols. On each one of the singular points \(p_1\) and~\(p_2\)
of \(\PP(1,2,3)\), the foliation \(\mathcal{G}_{\mathrm{IV}}\) is \emph{regular}, in the sense that, at each one of these points, it is the quotient of a regular foliation. In the smooth part of \(\PP(1,2,3)\), \(\mathcal{G}_{\mathrm{IV}}\) has three non-degenerate singularities:
\begin{itemize}
	\item \(q_1=[1:0:0]\), with eigenvalues \(-\omega:1\); 
	\item \(q_2=[1:1:2]\), a linearizable node with eigenvalues \(1:3\);
	\item \(q_3=[2:2:4]\), a saddle with eigenvalues \(-3:2\). 
\end{itemize}
The curves \(B\) and \(C\) are smooth and tangent at~\(q_2\); since there are two smooth invariant curves through this point tangent to each other, the foliation is linearizable in a neighborhood of it, and the curves have a contact of order~\(3\). The curve \(C\) passes also through \(q_1\), where it has two transverse smooth branches; the curve \(B\) passes also through \(p_1\) and \(q_3\).

After desingularizing \(p_1\) and \(p_2\), and resolving the singularity of the foliation at \(q_2\), we find three chains of two rational curves of self-intersection \(-2\) each:
\begin{itemize}
\item one formed by the divisor in the desingularization of \(p_1\), plus the strict transform of~\(B\);
\item the divisor in the desingularization of \(p_2\); and 
\item one formed by the invariant divisors in the resolution of~\(q_2\)
\end{itemize}
(see the bottom-left and the top of Figure \ref{AdolfoIVF3}). The contraction of each one of these chains gives a singularity of type~\(A_{2}\). These give the three singularities of the quotient model for~\(\mathcal{F}_3\). The link comes from the transform of~\(C\).

\begin{figure}
\centering	 
\includegraphics[width=0.8\textwidth]{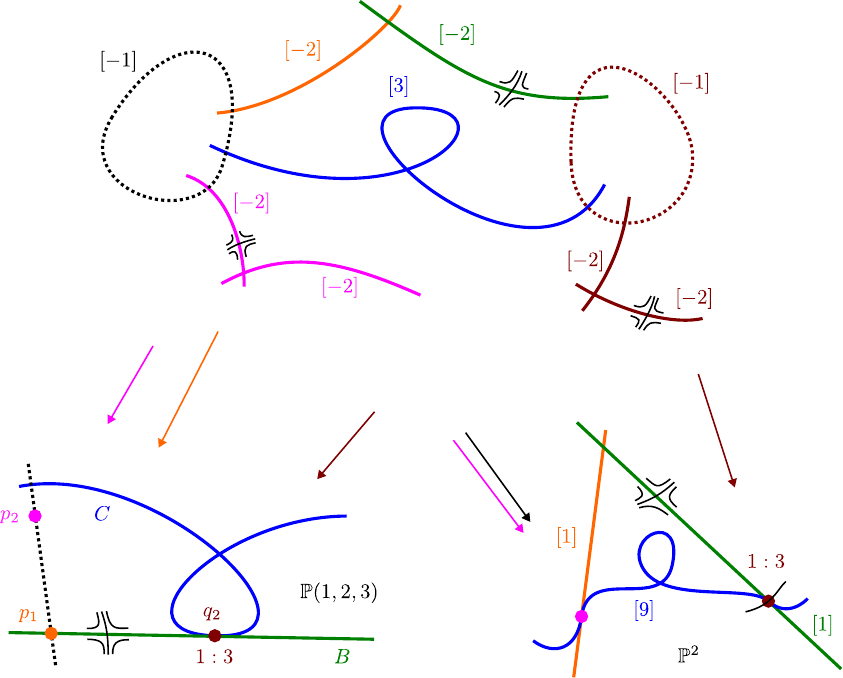} 
\caption{To the left, schematic minimal reduction of singularities of $\mathcal{G}_{\mathrm{IV}}$; to the right, a morphism to \(\PP^2\). Dotted lines represent curves that are not invariant by the foliation.}
	\label{AdolfoIVF3}
\end{figure}

\subsubsection{The plane foliation $\mathcal{H}_3$}\label{equivF3} The birational map~(\ref{eq:birp123top2}) transforms the foliation \(\mathcal{G}_{\mathrm{IV}}\) into the foliation $\mathcal{H}_3$ of degree two on \(\PP^2\) given by (\ref{eq:f3}). This gives an alternative proof of Theorem~\ref{thm:f3}. 

\begin{remark} There is a full one-dimensional system (a pencil) of degree-two foliations leaving invariant the nodal cubic and the two inflectional tangents of Theorem~\ref{thm:f3}, so, unlike the situation in Theorems~\ref{thm:f4} and~\ref{thm:f6}, the foliation is not determined by its invariant algebraic curves.\end{remark}

The invariant curve \(x=0\) of \(\mathcal{H}_3\) is produced by the birational modification (\ref{eq:birp123top2}) from \(\PP(1,2,3)\) to \(\PP^2\). The invariant curves \(B\) and \(C\) for \(\mathcal{G}_{\mathrm{IV}}\) produce, for \(\mathcal{H}_3\), the other invariant line and the invariant cubic in the statement Theorem~\ref{thm:f3}.

Let us describe the singularities of the foliation \(\mathcal{H}_3\) on \(\PP^2\) given by~(\ref{eq:f3}). The singularities of \(\mathcal{G}_{\mathrm{IV}}\) away from \(x=0\), \(q_1\), \(q_2\) and \(q_3\), give singularities for \(\mathcal{H}_3\), placed at \((1:0:0)\), \((1:1:2)\) and \((2:1:1)\), which admit the same local descriptions. On the line \(x=0\) of \(\PP^2\), the singularities of \(\mathcal{H}_3\) are \((0:1:3)\), a saddle with eigenvalues \(-1:2\), and the point \((0:0:1)\), a nilpotent singularity with multiplicity three.

The composition of the resolution \(\varpi:S\to\PP(1,2,3)\) of Section~\ref{sec:p123} with the map \(j\) of Eq. (\ref{eq:birp123top2}) gives a resolution of \(\mathcal{H}_3\); see the right-hand side of Figure~\ref{AdolfoIVF3}.

By conjugating the birational symmetries (\ref{eq:inv_f3_p123}) and (\ref{eq:tri_f3_p123}) of \(\mathcal{G}_\mathrm{IV}\) via the birational map~(\ref{eq:birp123top2}), we obtain:

\begin{proposition}\label{prop:symf3p2}
In homogeneous coordinates of \(\PP^2\), in the model (\ref{eq:f4}) of \(\mathcal{F}_3\), the birational involution induced by (\ref{eq:cremona}) is given by the quartic transformation 
\[(x:y:z)\mapsto (x(2y-z)^3:(2y-z)\widetilde{C}:(3y-2z)\widetilde{C}),\]
for \(\widetilde{C}= 3xy^2-y^3-3xyz+xz^2\), and the birational trivolution induced by (\ref{eq:sym_order3}), by 
	\[(x:y:z)\mapsto((x-y)^3:(x-y)(xy-xz+y^2):3x^2y-2x^2z-3xy^2+3xyz-2y^3).\]			\end{proposition}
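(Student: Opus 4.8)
The plan is to obtain Proposition~\ref{prop:symf3p2} by direct conjugation, transporting the birational automorphisms of $\mathcal{G}_{\mathrm{IV}}$ on $\PP(1,2,3)$ — namely the involution~(\ref{eq:inv_f3_p123}) and the trivolution~(\ref{eq:tri_f3_p123}) — to $\PP^2$ via the explicit birational equivalence $j:\PP(1,2,3)\dashrightarrow\PP^2$ of Eq.~(\ref{eq:birp123top2}) and its inverse $(X:Y:Z)\mapsto[X:XY:X^2Z]$. Concretely, for each of the two maps $\phi$ on $\PP(1,2,3)$ I would compute $j\circ\phi\circ j^{-1}$ as a rational self-map of $\PP^2$, then clear denominators and remove the common polynomial factor to present the result in the lowest-degree homogeneous form.

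First I would handle the involution. Starting from $(X:Y:Z)\in\PP^2$, apply $j^{-1}$ to get $[X:XY:X^2Z]$, feed this into~(\ref{eq:inv_f3_p123}) (substituting $x=X$, $y=XY$, $z=X^2Z$ into $2xy-z$, $C$, and $(3xy-2z)C$, where $C=3x^2y^2-y^3-3xyz+z^2$), and finally apply $j$ to the resulting point of $\PP(1,2,3)$. One must be careful with the weights: $j$ sends $[x:y:z]$ to $(x^3:xy:z)$, so after the substitution the three weighted-homogeneous components (of degrees $1$, $2$, $3$ in the weighted grading, or rather $3$, $6$, $9$ for the natural lift) get cubed, multiplied, and copied appropriately, after which the homogeneous factor in $X$ must be cancelled. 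The polynomial $\widetilde C = 3xy^2 - y^3 - 3xyz + xz^2$ appearing in the statement should emerge as the image of $C$ under the substitution $x=X,\,y=XY,\,z=X^2Z$ divided by the appropriate power of $X$; recognizing that this $\widetilde C$ is exactly the dehomogenized/re-homogenized form of the invariant cubic of Theorem~\ref{thm:f3} provides a useful internal consistency check. The analogous computation with~(\ref{eq:tri_f3_p123}) gives the trivolution formula.

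Two independent verifications then pin down correctness. On the one hand, the resulting transformations must preserve $\mathcal{H}_3$: I would check that the pullback of the $1$-form~(\ref{eq:f3}) under each map is a functional multiple of~(\ref{eq:f3}) itself — a finite symbolic computation. On the other, since the quartic transformation is claimed to be an involution and the other of order three, I would verify $\phi\circ\phi=\mathrm{id}$ and $\psi^{\circ 3}=\mathrm{id}$ directly as rational maps (equivalently, that the relevant compositions reduce, after cancelling common factors, to the identity). That the two together generate a copy of $S_3$ follows formally from the relation $\phi\circ\psi\circ\phi=\psi^{-1}$, which is inherited from the corresponding relation on $\PP(1,2,3)$ — itself inherited from $Q\circ S\circ Q=S^{-1}$ on $\PP^2$ — so no new argument is needed there.

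The only real obstacle is bookkeeping: the weighted grading of $\PP(1,2,3)$ means that a naive substitution produces components with a spurious common factor of a power of $X$ (and, in the trivolution case, of $(x-y)$), and one must identify and cancel these correctly to land on the stated quartic and cubic forms rather than some higher-degree avatar. There is also the standard subtlety that a birational conjugate is only defined up to the indeterminacy loci, so the equality of maps should be read birationally; the explicit cancellations exhibited above make this precise. Once the factors are handled, everything is routine polynomial algebra, and the proposition follows.
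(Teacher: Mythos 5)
Your plan is exactly the paper's proof: the proposition is obtained by conjugating the involution~(\ref{eq:inv_f3_p123}) and the trivolution~(\ref{eq:tri_f3_p123}) by the birational map~(\ref{eq:birp123top2}), clearing the spurious powers of $x$ coming from the weighted substitution $[X:XY:X^2Z]$. The extra checks you propose (pulling back the form~(\ref{eq:f3}) and verifying the orders of the maps) are sound but not needed beyond the computation itself.
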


\subsubsection{Relation of \(\mathcal{H}_3\) with Pereira's model for \(\mathcal{F}_3\)} In \cite[Sect.~5]{Pereira}, Pereira gave a projective model for Brunella's very special foliation $\mathcal{F}_3$. It is the degree-two foliation given by 
\begin{equation} \label{modelo_pereira}
\Xi = Z (2 XY-ZX-Y^2)\, \dd X - 3 XZ (X-Y)\, \dd Y + X (ZX+XY-2 Y^2)\, \dd Z. \end{equation}
It is tangent to the nodal cubic \(Y^3 + X^2 Z+XZ^2 - 3 XYZ = 0\), and to its inflectional tangents \(X=0\) and \(Z=0\).

The foliation (\ref{eq:f3}) is linearly equivalent to Pereira's model for \(\mathcal{F}_3\) (\ref{modelo_pereira}), via the linear map \((x:y:z)=(X:X-Y:2X-3Y+Z)\), with inverse \((X:Y:Z)=(x:x-y:x-3y+z)\).

By conjugating by this map the birational symmetries of Proposition~\ref{prop:symf3p2}, we have:

\begin{proposition} 
Pereira's model for the foliation $\mathcal{F}_3$, given by the vanishing of the form $\Xi$ of Eq.~(\ref{modelo_pereira}), is invariant by the quartic involutive Cremona map:
 \[ (X:Y:Z)\dashrightarrow ( X (Y-Z)^3 : (XZ-Y^2) (Z-Y) (X-Y) : (Y-X)^3 Z ),\]
and by the degree three Cremona trivolution 
\((X:Y:Z)\dashrightarrow (XZ^2 : XYZ : Y^3)\).
\end{proposition}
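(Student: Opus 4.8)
The plan is to obtain the two transformations in the statement by transporting the generators of the group of birational automorphisms of $\mathcal{H}_3$ exhibited in Proposition~\ref{prop:symf3p2} through the linear isomorphism relating $\mathcal{H}_3$ to Pereira's model. Write $L(X:Y:Z)=(X:X-Y:2X-3Y+Z)$; since substituting these expressions for $x,y,z$ into the form (\ref{eq:f3}) produces a scalar multiple of the form $\Xi$ of (\ref{modelo_pereira}), the map $L$ is an isomorphism from $(\PP^2,\Xi)$ onto $(\PP^2,\mathcal{H}_3)$, with inverse $L^{-1}(x:y:z)=(x:x-y:x-3y+z)$. Consequently, if $\phi$ is a birational automorphism of $(\PP^2,\mathcal{H}_3)$, then $L^{-1}\circ\phi\circ L$ is one of $(\PP^2,\Xi)$; moreover conjugation by a biregular map preserves the order of a birational transformation, so the involutivity of the first map and the order-three character of the second will be inherited, respectively, from the Cremona involution (\ref{eq:cremona}) and from the order-three symmetry underlying the trivolution of Proposition~\ref{prop:symf3p2}. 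Thus nothing beyond the explicit composition $L^{-1}\circ\phi\circ L$ needs to be carried out.

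Concretely, for the quartic involution one substitutes $x=X$, $y=X-Y$, $z=2X-3Y+Z$ into the formula $(x:y:z)\mapsto(x(2y-z)^3:(2y-z)\widetilde{C}:(3y-2z)\widetilde{C})$, with $\widetilde{C}=3xy^2-y^3-3xyz+xz^2$, of Proposition~\ref{prop:symf3p2}. The bookkeeping is controlled by the identities $2y-z=Y-Z$ and $3y-2z=-(X-3Y+2Z)$, together with $\widetilde{C}(X,X-Y,2X-3Y+Z)=Y^3+X^2Z+XZ^2-3XYZ$ — this last equality being forced, since $L$ must carry the cubic $\widetilde{C}=0$ onto the nodal cubic of (\ref{modelo_pereira}). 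Post-composing with $L^{-1}$, that is, sending the resulting triple $(u:v:w)$ to $(u:u-v:u-3v+w)$, one verifies after expanding and factoring that the three homogeneous quartics obtained are, up to sign, $X(Y-Z)^3$, $(XZ-Y^2)(Z-Y)(X-Y)$ and $(Y-X)^3Z$, with no common factor; this yields the claimed quartic map. For the trivolution one proceeds in exactly the same way starting from the second formula of Proposition~\ref{prop:symf3p2}; there the simplification is far more dramatic, and one lands on the monomial map $(XZ^2:XYZ:Y^3)$ (or, starting from the inverse symmetry, on its inverse $(Y^3:XYZ:X^2Z)$, either one generating the same cyclic group of order three).

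Since everything reduces to expanding polynomials of degree at most four in three variables, there is no conceptual obstacle; the one point deserving attention is the clearing of extraneous common factors after each composition, together with the verification that the resulting maps do not drop below the expected degree — both evident from the factored forms displayed above. As an independent check on the final formulas, one may confirm directly that for each of the two maps $\Psi$ so produced the pullback $\Psi^{*}\Xi$ is a polynomial multiple of $\Xi$ (and not merely of its invariant configuration of curves, which is shared by a whole pencil of degree-two foliations and hence does not by itself pin down $\Xi$), and that $\Psi$ is indeed a birational transformation of the stated degree.
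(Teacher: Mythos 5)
Your proposal follows the paper's own route: it conjugates the two symmetries of Proposition~\ref{prop:symf3p2} by the linear equivalence $(x:y:z)=(X:X-Y:2X-3Y+Z)$ between the model (\ref{eq:f3}) and Pereira's form $\Xi$, and your intermediate identities ($2y-z=Y-Z$, $3y-2z=-(X-3Y+2Z)$, $\widetilde{C}\mapsto Y^3+X^2Z+XZ^2-3XYZ$) are correct and do produce the stated quartic map. The only minor imprecision is that conjugating the trivolution formula of Proposition~\ref{prop:symf3p2} as written yields $(Y^3:XYZ:X^2Z)$, the inverse of the monomial map in the statement, but, as you yourself note, both generate the same cyclic group of order three, so the invariance claim is unaffected.
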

We observe that this trivolution type appears in \cite[Prop.~6.23]{CeDe}.

\subsection{The Chazy V equation and Santos's foliation \(\mathcal{F}_4\)}  In this section we will establish the part of Theorem~\ref{thm:links-chazy} concerning the birational equivalence between \(\mathcal{F}_4\) and the foliation on \(\PP(1,2,3)\) given by the Chazy V equation. Theorems~\ref{thm:f4} and~\ref{thm:inv-chazy_v} are consequences of this equivalence, and will be established here as well.

\subsubsection{Chazy V and \(\mathcal{F}_4\)} The Chazy V equation (\ref{eq:chazy_v}) is given by the quasihomogeneous vector field on \(\CC^3\)
\[W=y\del{x}+z\del{y}+(2xz+4y^2-2x^2y)\del{z}.\]
The latter induces a foliation on \(\PP(1,2,3)\); it will be denoted by \(\mathcal{G}_{\mathrm{V}}\). The discussion that follows is based on \cite{guillot-chazy}. The vector field has the quasihomogeneous first integral of degree four
\(B =x^4-4x^2y+2zx-y^2\),
and the invariant surface defined by the quasihomogeneous polynomial of degree six
\[C =2y^2x^2-2xyz+z^2-2y^3.\]
Let \(\Sigma=B^{-1}(1)\). The foliation on \(\Sigma\) induced by \(W\) will be denoted by \(\widetilde{\mathcal{G}}_{\mathrm{V}}\). There is a natural action of \(\ZZ_4\) on \(\Sigma\), given by the restriction of (\ref{C3-quasi}) to the group of fourth roots of unity, generated by \((x,y,z)\mapsto(\ii x,-y,-\ii z)\), which preserves \(\widetilde{\mathcal{G}}_{\mathrm{V}}\). 

Let \(D_4\) be the vector field on \(\PP^1\times\PP^1\) that, in the chart \((X:1,Y:1)\), reads
\[(\ii-1)\left(\ii X\del{X}+Y\del{Y}\right).\]
It is tangent to the foliation \(\mathcal{E}_4\) of Eq.~(\ref{fol_arriba_f4}) in Section~\ref{sobref4}. As we there discussed, the action of \(\ZZ_4\) on \(\PP^1\times\PP^1\) given by the transformation \(T_4\) of Eq.~(\ref{eq:sym_order4}) produces, after quotient, a surface endowed with a foliation, Santos's foliation~\(\mathcal{F}_4\). 

Let
\[f(X,Y)=\frac{(X-1)(Y-1)}{(X Y+\ii X-\ii Y-1)},\]
and consider the birational map
\(\Phi_4:\PP^1\times\PP^1\dashrightarrow \Sigma\) given by \(\Phi_4(X:1,Y:1)=(f,D_4 f,D_4^2f)\). The map \(\Phi_4\) maps \(D_4\) to the restriction of \(W\) to~\(\Sigma\), thus mapping \(\mathcal{E}_4\) to \(\widetilde{\mathcal{G}}_{\mathrm{V}}\). It is equivariant with respect to the previously described actions of \(\ZZ_4\) on \(\Sigma\) and on \(\PP^1\times\PP^1\). Thus, \(\Phi_4\) induces the birational equivalence between \(\mathcal{G}_{\mathrm{V}}\) and Santos's foliation \(\mathcal{F}_4\) stated in Theorem~\ref{thm:links-chazy}. (The above formula for \(f\) corrects the one given in \cite[p.~72]{guillot-chazy}; the one given there for the inverse of \(\Phi_4\) is correct.) 

\subsubsection{Birational symmetries of Chazy V and \(\mathcal{G}_{\mathrm{V}}\)}
By conjugating the involution (\ref{eq:inv.S4}) by \(\Phi_4\) and its inverse, we obtain a birational involution of \(\Sigma\), which produces, on its turn, the birational involution of \(\PP(1,2,3)\)
\begin{multline}\label{eq:inv_f4_p123}
[x:y:z]\mapsto [x^3-3xy+z:-x^4y+6x^2y^2-4xyz-y^3+z^2:\\-x^6z+8x^5y^2-x^4yz-32x^3y^3+29x^2y^2z+8xy^4-12xyz^2-3y^3z+2z^3].\end{multline}
By a procedure in all ways similar to that of the previous section, this involution may be promoted to a birational involution of \(\CC^3\) preserving the vector field giving the Chazy V equation. This is the content Theorem~\ref{thm:inv-chazy_v}, which can be established by a direct computation, and which we omit.

\subsubsection{Description of \(\mathcal{G}_{\mathrm{V}}\)} The curves on \(\PP(1,2,3)\) defined by \(B\) and \(C\) are invariant by \(\mathcal{G}_{\mathrm{V}}\); we will denote them by the same symbols. The foliation is regular at the singular points \(p_1\) and~\(p_2\) of \(\PP(1,2,3)\); away from these, its singularities are:
\begin{itemize}
	\item \(q_1=[1:0:0]\), with eigenvalues \(\ii:1\);
	\item \(q_2=[1:1:2]\), a linearizable node with eigenvalues \(1:4\);
	\item \(q_3=[3:3:6]\), a saddle with eigenvalues \(-4:3\).
\end{itemize}

The curves \(B\) and \(C\) pass through the point \(q_2\), at which they are smooth and tangent. In particular, \(q_2\) is a linearizable singularity of the foliation, and the curves have a contact of order~\(4\). The curve \(C\) passes also through \(q_1\), where it has a node, and \(B\) passes also through \(p_2\) and~\(q_3\). 

After desingularizing \(p_2\), and resolving the singularity of the foliation at \(q_2\), we obtain two chains of length three of rational curves of self intersection \(-2\):
\begin{itemize}
	\item one formed by the divisors in the resolution of \(p_2\), plus the strict transform of \(B\), and
	\item one formed by the invariant components in the resolution of~\(q_2\)
\end{itemize}
(see the left-hand side of Figure~\ref{AdolfoGVF4}). Upon contraction, they form the two singularities of type \(A_3\) which, together with \(p_1\) (which is of type~\(A_1\)), give the three singularities in the quotient model for~\(\mathcal{F}_4\) described in Section~\ref{sobref4}.

\begin{figure} 
\centering
\includegraphics[width=0.8\textwidth]{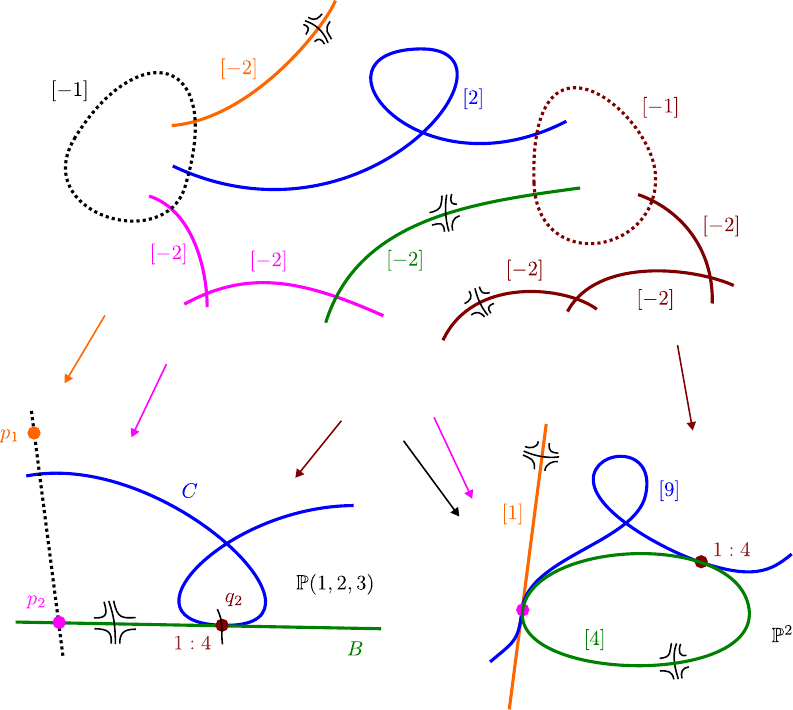} 
	\caption{To the left, schematic resolution of singularities of~$\mathcal{G}_{\mathrm{V}}$. To the right, a morphism to \(\PP^2\).}
	\label{AdolfoGVF4}
\end{figure}

\subsubsection{Birational equivalence of $\mathcal{F}_4$ and \(\mathcal{H}_4\)}\label{equivF4}

Under the birational map (\ref{eq:birp123top2}), \(\mathcal{G}_{\mathrm{V}}\) is mapped to the degree-two foliation \(\mathcal{H}_4\) of \(\PP^2\) given by~(\ref{eq:f4}). The invariant line \(x=0\) of \(\mathcal{H}_4\) is produced by the birational map (\ref{eq:birp123top2}); the other invariant curves in the statement of Theorem~\ref{thm:f4} are the strict transforms of \(B\)
and \(C\) under (\ref{eq:birp123top2}). The fact that \(\mathcal{H}_4\) is the only foliation of degree two on \(\PP^2\) tangent to the cubic, conic and line, follows from the fact that the tangency divisor of a pair of degree-two foliations on \(\PP^2\) has degree five. This establishes Theorem~\ref{thm:f4}. 

Let us describe the singularities of \(\mathcal{H}_4\). The previously described singular points~\(q_1\), \(q_2\) and \(q_3\) of \(\mathcal{G}_\mathrm{V}\) become, respectively, the singular points
\((1:0:0)\), \((1:1:2)\) and \((9:3:2)\) of \(\mathcal{H}_4\). On the line \(x=0\), \(\mathcal{H}_4\) has two further singular points: \((0:1:4)\), a saddle with eigenvalues \(-1:2\), and \((0:0:1)\), a nilpotent singularity with multiplicity three. The desingularization of \(\mathcal{H}_4\) is given by the composition of the resolution \(\varpi:S\to\PP(1,2,3)\) with the map \(j\) in~(\ref{eq:birp123top2}). See the right-hand side of Figure~\ref{AdolfoGVF4}.

By conjugating the birational involution (\ref{eq:inv_f4_p123}) by the birational map (\ref{eq:birp123top2}), we obtain:
 
\begin{proposition} \label{prop:symf4p2}
The birational involution of \(\PP^2\) coming from (\ref{eq:inv.S4}), preserving the model (\ref{eq:f4}) for \(\mathcal{F}_4\), is the quartic transformation
\begin{multline*}(x:y:z)\mapsto (x(x-3y+z)^3:(x-3y+z)(6xy^2-x^2y-4xyz-y^3+xz^2): \\ 8x^2y^2-x^3z-x^2yz-32xy^3+29xy^2z+8y^4-12xyz^2-3y^3z+2xz^3).\end{multline*} 
\end{proposition}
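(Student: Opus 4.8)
The plan is to obtain this formula by explicit conjugation, exactly as the statement's preamble announces. We already have the birational involution of $\PP(1,2,3)$ preserving $\mathcal{G}_{\mathrm{V}}$, namely the map~(\ref{eq:inv_f4_p123}), call it $\iota$; and we have the birational equivalence $j:\PP(1,2,3)\dashrightarrow\PP^2$ of~(\ref{eq:birp123top2}), which sends $\mathcal{G}_{\mathrm{V}}$ to $\mathcal{H}_4$. The involution of $\PP^2$ preserving $\mathcal{H}_4$ that we seek is therefore $j\circ\iota\circ j^{-1}$. So the first step is to write down $j$ and its inverse explicitly: $j([x:y:z])=(x^3:xy:z)$ and $j^{-1}(X:Y:Z)=[X:XY:X^2Z]$. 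The second step is to substitute $j^{-1}$ into the three quasihomogeneous components of $\iota$, apply $\iota$, and then apply $j$, simplifying the resulting rational map of $\PP^2$ by clearing the common polynomial factor so as to present it in lowest-degree homogeneous form.

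Carrying this out, one plugs $(x,y,z)=(X,XY,X^2Z)$ (in the shorthand of the paper, $[x:y:z]$ in $\PP(1,2,3)$ pulled back from $(X:Y:Z)\in\PP^2$) into the first component $x^3-3xy+z$ of $\iota$, getting $X^3-3X^2Y+X^2Z=X^2(X-3Y+Z)$; similarly the second and third components of $\iota$ become $X^2$ times, respectively, a degree-four polynomial and an $X^2$ times a degree-five polynomial — these are $6xy^2-x^2y-4xyz-y^3+xz^2$ and the long degree-five expression, after the substitution. Then one applies $j$: the first coordinate of the image is the cube of the first component of $\iota$, i.e. $X^6(X-3Y+Z)^3$; the second is the product of the first and second components; the third is the third component itself. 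Extracting and cancelling the common factor $X^6$ from all three coordinates yields precisely the displayed triple $\bigl(x(x-3y+z)^3:(x-3y+z)(6xy^2-x^2y-4xyz-y^3+xz^2):8x^2y^2-x^3z-x^2yz-32xy^3+29xy^2z+8y^4-12xyz^2-3y^3z+2xz^3\bigr)$, where we have renamed the homogeneous coordinates back to $(x:y:z)$. That it is an involution preserving $\mathcal{H}_4$ is automatic, since $j$ is a birational equivalence of foliations and $\iota$ is an involution preserving $\mathcal{G}_{\mathrm{V}}$.

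The only genuine work here is bookkeeping: the substitution into the degree-six component of $\iota$ and the subsequent expansion of the product $(X^3-3X^2Y+X^2Z)\cdot(\text{degree-four})/X^6$ must be done carefully to get the coefficients right, and one should double-check that no further common factor (beyond $X^6$) divides all three coordinates, so that the map is genuinely presented in its minimal quartic form. I expect this to be the main — indeed the only — obstacle, and it is purely computational; it is the kind of verification best delegated to a computer algebra system, and the paper rightly presents only the outcome. A quick sanity check is to confirm that the first coordinate vanishes on $x=0$ with the expected multiplicity and on the inflectional line $x-3y+z=0$, consistent with the foliation $\mathcal{H}_4$ leaving $x=0$ invariant, and that applying the formula twice returns the identity on a random point, which pins down the coefficients.
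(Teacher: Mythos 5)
Your approach is exactly the paper's: the proposition is obtained by conjugating the involution~(\ref{eq:inv_f4_p123}) of \(\PP(1,2,3)\) by the birational map~(\ref{eq:birp123top2}), and the paper gives no further detail beyond this computation, so in substance your proposal matches its proof. One bookkeeping correction, though: after substituting \((x,y,z)=(X,XY,X^2Z)\), the second and third components of the involution factor as \(X^3\cdot(6XY^2-X^2Y-4XYZ-Y^3+XZ^2)\) (a cubic cofactor, not a quartic) and \(X^5\cdot r\) with \(r\) the displayed quartic (not ``\(X^2\) times a degree-five polynomial''); hence the image triple \(\bigl(P'^3:P'Q':R'\bigr)=\bigl(X^6(X-3Y+Z)^3:X^5(X-3Y+Z)q:X^5r\bigr)\) has common factor \(X^5\), not \(X^6\). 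Cancelling \(X^6\), as you state, would not even leave polynomial entries and would strip the factor \(x\) from the first coordinate, contradicting the displayed \(x(x-3y+z)^3\); cancelling \(X^5\) gives precisely the stated quartic map. With that one-power correction your computation goes through verbatim.
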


The \emph{Jacobian} of a birational map of \(\PP^2\) is the determinant of the Jacobian matrix of its lift to \(\CC^3\). Its vanishing gives the projective curves contracted to points by the birational map (cf. Prop. 3.5.3 of \cite{ACFLl}). For the degree four map above, 
the Jacobian is 
\[ 4 (x-y)^4 (x-3 y+z)^3 (x^2-4 y x+2 zx-y^2),\]
with the line $x-3y+z=0$ being tangent to the conic at \((1:1:2)\); the lines $y-x=0$ and $x-3y+z=0$ intersecting at $(1:1:2)$; and the line $y-x =0$ intersecting the conic at $(0:0:1) $ and $(1:1:2)$.

In general, a birational map of \(\PP^2\) determines a
\emph{homaloidal system} (a net of rational curves that map to the straight lines of \(\PP^2\) under it). When the homaloidal system of a  degree $d$ birational map has base-points of multiplicity one except for one    base-point $O$ with multiplicity \(d-1\), the map is called a \emph{de Jonquières} one, and can be  characterized as a birational map preserving the  pencil of straight lines by~$O$. 

The homaloidal system of the above degree-four map is formed by quartics having ordinary triple points at $(1:1:2)$, and that are smooth and tangent at $(0:0:1)$. One of the local branches at $(1:1:2)$ defines a contact direction of the elements of the system: four blow-ups are needed to separate these elements, one at $(1:1:2)$ and three along the directions given by the contact branch. At $(0:0:1)$, three blow ups are needed to separate the curves. Thus, it is a de Jonqui\`eres map  of  type~32.1 in Table~5.1, p.~96, of~\cite{nguyen-thesis}.

\subsection{The Chazy VI equation and Santos's foliation \(\mathcal{F}_6\)} In this section we will establish the birational equivalence between Santos's foliation \(\mathcal{F}_6\) and the foliation on \(\PP(1,2,3)\) induced by the Chazy VI equation stated in Theorem~\ref{thm:links-chazy}. As a consequence, we will establish Theorem~\ref{thm:f6}. Theorem~\ref{thm:iv_to_vi} will follow from it and from Remark~\ref{rmk:f3-f6}.

\subsubsection{Chazy VI and \(\mathcal{F}_6\)}

The Chazy VI equation (\ref{eq:chazy_vi}) is given by the quasihomogeneous vector field on \(\CC^3\)
\[W=y\del{x}+z\del{y}+(xz+5y^2-x^2y)\del{z}.\]
It induces a foliation on \(\PP(1,2,3)\), that we will denote by \(\mathcal{G}_{\mathrm{VI}}\). The discussion that follows is based on \cite{guillot-chazy}. The vector field has the first integral of degree six
\[B= x^6-6x^4y+6zx^3-15x^2y^2+6xyz+8y^3-3z^2,\]
and the invariant hypersurface given by the quasihomogeneous polynomial of degree six
\[C =y^2x^2-xyz+z^2-3y^3.\]
For \(\Sigma=B^{-1}(1)\), we have a birational map \(\Phi_6:\mathrm{Bl}_3(\PP^2)\to \Sigma\) \cite[p.~73]{guillot-chazy}, equivariant with respect to the actions of \(\ZZ_6\) given the action of sixth roots of unity on \(\Sigma\) via (\ref{C3-quasi}), and the action of \(T_6\) on \(\mathrm{Bl}_3(\PP^2)\) described in Section~\ref{sobref6}. It maps \(\mathcal{E}_6\) to the foliation induced by~\(W\), and induces an identification of \(\mathcal{G}_{\mathrm{VI}}\) with \(\mathcal{F}_6\).

\subsubsection{Relation between \(\mathcal{G}_{\mathrm{IV}}\) and \(\mathcal{G}_{\mathrm{VI}}\)}
The relation between \(\mathcal{F}_4\) and \(\mathcal{F}_6\) discussed in Remark~\ref{rmk:f3-f6} has a counterpart for for the Chazy IV and VI equations and the foliations they induce on \(\PP(1,2,3)\). From the explicit expressions for \(\Phi_6\) and the inverse of \(\Phi_3\) in \cite[p.~71]{guillot-chazy}, we may realize an explicit two-to-one map from \(\PP(1,2,3)\) to itself, mapping \(\mathcal{G}_{\mathrm{IV}}\) to \(\mathcal{G}_{\mathrm{VI}}\):
\begin{multline*}[x:y:z]\mapsto [xz-2y^2: 3x^4y^2-3x^3yz-9x^2y^3+x^2z^2+8xy^2z+4y^4-3yz^2:\\ 9x^6y^3-12x^5y^2z-45x^4y^4+6x^4yz^2+63x^3y^3z-x^3z^3+\\+54x^2y^5-42x^2y^2z^2-48xy^4z+15xyz^3-16y^6+18y^3z^2-3z^4].\end{multline*}
In a way similar to that of Theorems~\ref{thm:inv-chazy_iv} and~\ref{thm:inv-chazy_v}, this rational map can be promoted to a rational map of \(\CC^3\) onto itself mapping the vector field of the Chazy~IV equation to that of the Chazy~VI one. This is the statement of Theorem~\ref{thm:iv_to_vi}. It can be established by a direct calculation, and we omit its proof.

\subsubsection{Description of \(\mathcal{G}_{\mathrm{VI}}\)}

We have invariant curves for \(\mathcal{G}_{\mathrm{VI}}\) given by \(B\) and \(C\). The foliation \(\mathcal{G}_{\mathrm{VI}}\) is regular at the singular points \(p_1\) and \(p_2\) of \(\PP(1,2,3)\); away from these, its singularities are
\begin{itemize}
	\item 
	\(q_1=[1:0:0]\), with eigenvalues \(\omega:1\);
	\item 
	\(q_2=[1:1:2]\), a linearizable node with eigenvalues \(1:5\);
	\item 
	\(q_3=[6:6:12]\), a saddle with eigenvalues \(-5:6\).
\end{itemize}

At \(q_2\), \(B\) has a node, and \(C\) has a smooth branch tangent to one of the branches of \(B\) at it. This point is thus a linearizable singularity of the foliation, and the curves have a total contact of contact of order six at it. The curve \(C\) has a node at \(q_1\), and \(B\) passes also through~\(q_3\).

Upon resolving the foliation at \(q_2\), we find a chain of five invariant rational curves of self-intersection \(-2\), given by the four invariant components in the desingularization of \(q_2\), plus the strict transform of~\(B\); see the left-hand side of Figure \ref{AdolfoGVIF6}. Its contraction gives a singularity of type \(A_{5}\) which, together with \(p_1\) and \(p_2\), gives the three singularities in the quotient model of~\(\mathcal{F}_6\).

\begin{figure} 
\centering
\includegraphics[width=0.8\textwidth]{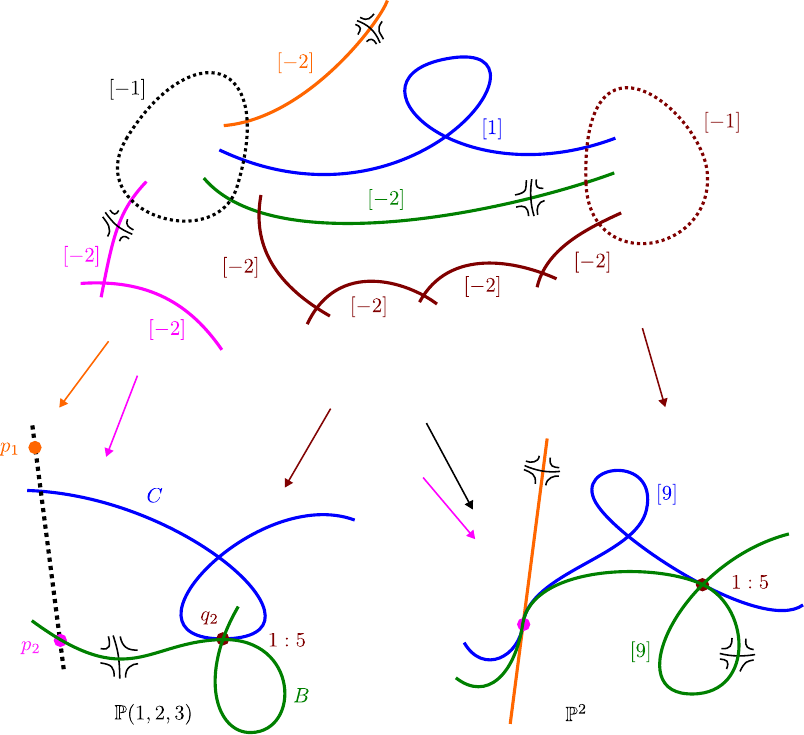} 
\caption{To the left, schematic resolution $\pi$ of singularities $\mathcal{G}_{\mathrm{VI}}$. To the right, a morphism to \(\PP^2\).}
	\label{AdolfoGVIF6}
\end{figure}

\subsubsection{Birational equivalence of $ \mathcal{F}_6$ and \(\mathcal{H}_6\)}\label{equivF6} Under the birational map (\ref{eq:birp123top2}), \(\mathcal{G}_{\mathrm{VI}}\) is mapped to the degree two foliation of \(\PP^2\) given by (\ref{eq:f6}). The invariant curve \(x=0\) for \(\mathcal{H}_6\) is produced by the map (\ref{eq:birp123top2}); its other invariant curves are the strict transforms of \(B\) and \(C\). That the foliation is the unique one of degree two on \(\PP^2\) tangent to both cubics follows, as before, from the fact that the tangency divisor of a pair of degree-two foliations on \(\PP^2\) has degree five. This establishes Theorem~\ref{thm:f6}.

The singularities of \(\mathcal{H}_3\) in the complement of \(x=0\) are those coming from the singularities \(q_1\), \(q_2\) and~\(q_3\) of \(\mathcal{G}_{\mathrm{VI}}\); they are placed, respectively, at \((1:0:0)\), \((1:1:2)\) and \((18:3:1)\), and have the same local descriptions. On the invariant line \(x=0\), we have the singular point \((0:1:5)\), a saddle with eigenvalues \(-1:2\), and \((0:0:1)\), a nilpotent singularity with multiplicity three. Its resolution is the composition of the resolution \(\varpi:S\to\PP(1,2,3)\) with the map \(j\) in~(\ref{eq:birp123top2}). See the bottom-right of Figure~\ref{AdolfoGVIF6}.

\section{Another plane model for Brunella's foliation and its flop}\label{GModel} In this section we will study the foliation \(\mathcal{J}\) on \(\PP^2\) given by the form \(\Omega\) of Eq.~(\ref{eq:nosso_f3}). Our aim is to establish Theorem~\ref{f3}, in particular, that it is a planar model for Brunella's very special foliation~$\mathcal{F}_3$. We describe the foliation in Section~\ref{sec:jdesc}, and establish the aforementioned birational equivalence in Section~\ref{sec:jbir}. In Section \ref{sec:dejonq}, we will see that (\ref{J4}) is an involutive birational automorphism of \(\mathcal{J}\), and that it corresponds to Brunella's foliated flop. In Section~\ref{factorJ4}, we shall give another proof of this last fact, along with a detailed factorization of the map (\ref{J4}) into quadratic Cremona maps.

\subsection{Description of \(\mathcal{J}\)}\label{sec:jdesc} The foliation is tangent to the nodal cubic $C_3: x y^2- y z^2+z x^2- 3 x y z =0$, as well as to the lines of the coordinate triangle \(\Delta_3:xyz=0\), each one of which is tangent to the cubic. Since a pair of degree-two foliations are either tangent along a curve of degree five or coincide, this is the only degree-two foliation tangent to this configuration. The singularities of \(\mathcal{J}\) are: 
\begin{itemize}
\item $ (0:0:1)$ , $(0:1:0)$ and $(1:0:0)$, non-degenerate, with eigenvalues \(1:2\), and which are linearizable, for they have two tangent invariant curves through them: \(C_3\) and the coordinate line tangent to it at this point;
\item $(1:1:1)$, a reduced non-degenerate singularity with eigenvalues \(-\omega:1\);
\item $( 0: 2:1)$, $(2:1:0)$ and $(1:0:2)$, which are reduced and non-degenerate, have eigenvalues \(-3:2\), and are not on~\(C_3\).
\end{itemize}

A minimal reduction of singularities of $\mathcal{J}$ is given by six blow ups: at each of the vertices $ (0:0:1)$, $(0:1:0)$, $(1:0:0)$, and along the infinitely near points along the directions of the local branches of $C_3$. This is schematically presented in Figure~\ref{C3C3linhaAdolfo}. 

\begin{figure}
\centering
\includegraphics[width=0.9\textwidth]{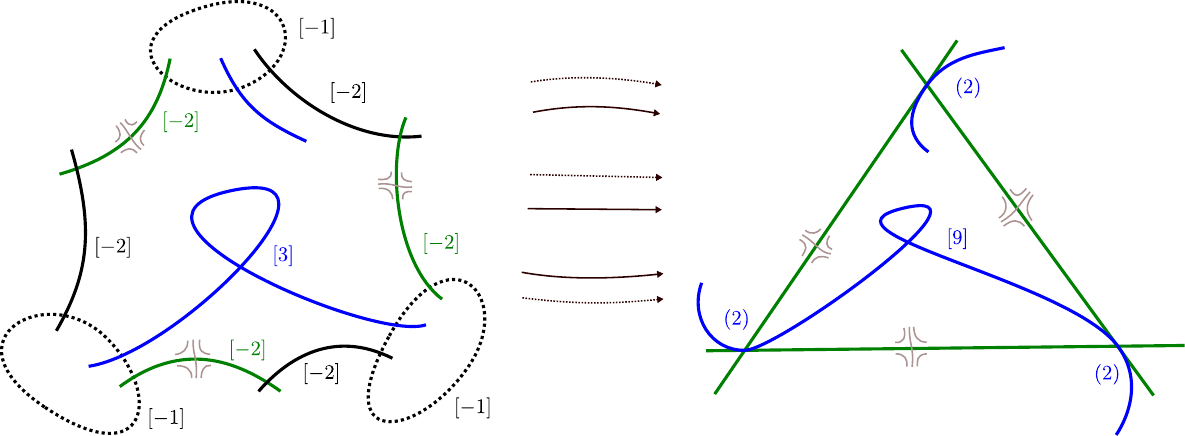} 
\caption{Reduction of singularities of~\(\mathcal{J}\). Small gray curves are local separatrices.}
	\label{C3C3linhaAdolfo}
\end{figure}

In the blown-up projective plane in six points, the nodal curve, strict transform of \(C_3\), has self-intersection $C_3^2 = 9 -2 -2 -2 = 3$. The strict transforms of the lines of the triangle \(\Delta_3\) and of the firstly introduced exceptional lines form three chains of two $(-2)$-curves in the blown-up plane (three \(\mathcal{J}\)-chains in the sense of \cite[Def.~8.1]{BGF}), matching the combinatorics of the desingularization of a singularity of type \(A_{2}\) each, as discussed in Section~\ref{sec:klein}.

\subsection{The birational equivalence with \(\mathcal{F}_3\)} \label{sec:jbir}
Let us establish the main fact of Theorem~\ref{f3}, that \emph{the degree two foliation $\mathcal{J}$ on \(\PP^2\) given by the form \(\Omega\) in Eq.~(\ref{eq:nosso_f3}) is a planar model for Brunella's very special foliation~$\mathcal{F}_3$}. We will give four proofs of it: 
 
\begin{proof}[First proof] After reduction of the singularities of $\mathcal{J}$, along the strict transform of $C_3$, there is just one reduced singularity, with eigenvalues \(-\omega^2:1\). According to \cite[Prop.~4.3]{BGF}, this characterizes the foliation $\mathcal{F}_3$ (up to birational equivalence). \end{proof}

\begin{proof}[Second proof] For the form \(\Xi\) of Eq. (\ref{modelo_pereira}) defining Pereira's model for \(\mathcal{F}_3\), the quadratic Cremona map $Q_2(x: y: z) = ( x^2 : x y : z y)$, and the form \(\Omega\) in Eq.~(\ref{eq:nosso_f3}), we have that \(Q_2^*( \Xi) = x^3 y \cdot \Omega\). This quadratic Cremona map establishes an explicit birational equivalence between Pereira's model for \(\mathcal{F}_3\) and the one presented in Theorem~\ref{f3}. \end{proof}

The birational equivalence of this proof is schematically presented in Figure~\ref{JVPbrMeu}.

\begin{figure}
\centering 
\includegraphics[width=0.75\textwidth]{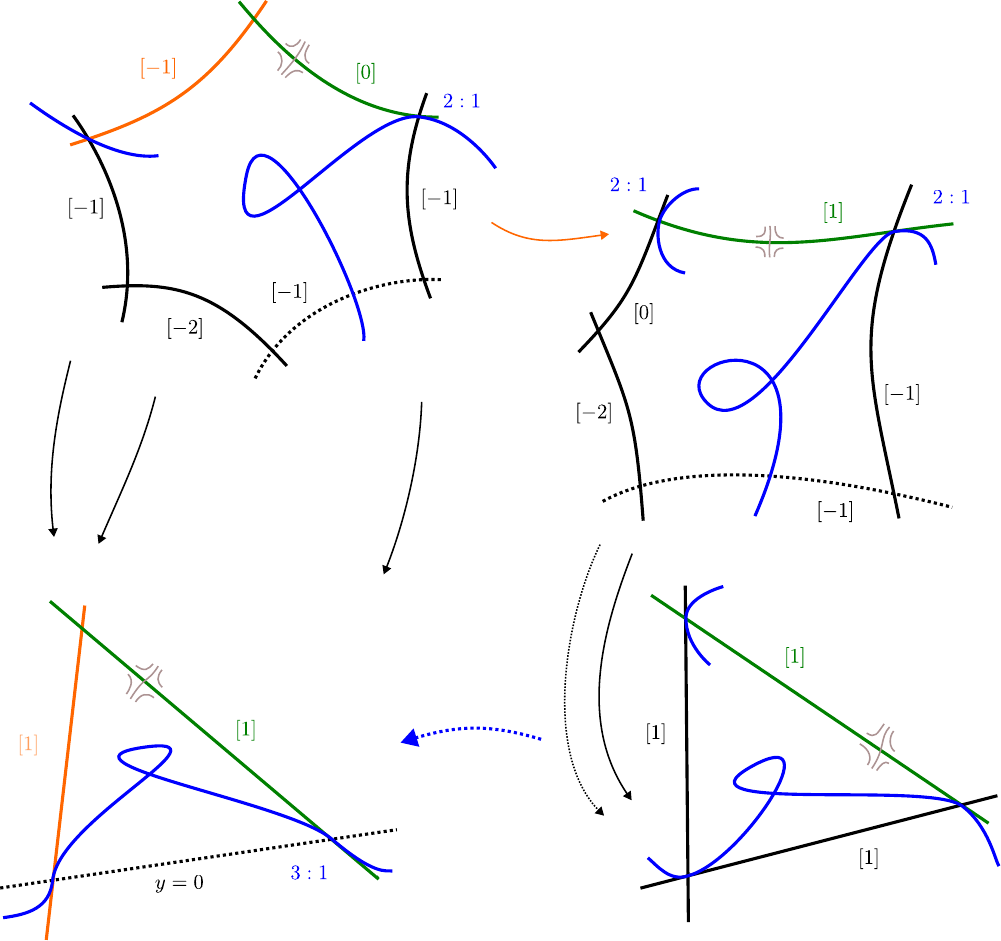}
\caption{Birational equivalence between the two models for~$\mathcal{F}_3$.} \label{JVPbrMeu}
\end{figure}

\begin{proof}[Third proof] Let us now give a proof in the spirit of the study on the Chazy equations carried out in~\cite{guillot-chazy}. Consider the quadratic homogeneous vector field on \(\CC^3\)
\begin{equation}\label{vf:lg}
W=x(x-y)\del{x}+y(y-z)\del{y}+z(z-x)\del{z},\end{equation}
which belongs to the kernel of (\ref{f3}) and projects to the foliation of \(\PP^2\) induced by it. It has the first integral \(B=xyz\). Let \(\Sigma=B^{-1}(1)\), parametrized by \((x,y)\mapsto (x,y,x^{-1}y^{-1})\). It has the order-three symmetry \(\sigma:\Sigma\to\Sigma\) given by \(\sigma(x,y)=(\omega x,\omega y)\), which preserves the foliation induced by~\(W\). The quotient of \(\Sigma\), together with the induced foliation, identifies, via the projection \(\pi:\CC^3\setminus\{0\}\to \PP^2\), to the foliation induced by \(W\) in the complement of \(B=0\) on \(\PP^2\). The map \(j:\Sigma\to \PP^2\), \[j(x,y)=(xy+x+1:xy+\omega^2 x+\omega :xy+\omega x+\omega^2),\] maps \(W|_\Sigma\) to the linear vector field \(D_3\) on \(\PP^2\) given, in the affine chart \((X:Y:1) \), by~(\ref{y3}). It has an inverse, given by
\[(X:Y:1)\mapsto\left(\frac{X+\omega Y+\omega^2}{X+\omega^2 Y+\omega},\frac{X+Y+1}{X+\omega Y+\omega^2}\right),\]
and is thus a birational isomorphism. For the cyclic permutation of variables \(T_3\) in (\ref{eq:sym_order3}), we have that \(j\circ\sigma=T_3^2\circ j(x,y)\). This establishes a birational identification between the foliation on \(\PP^2\) induced by \(W\) and the one on \(\PP^2/T_3\) induced by \(\mathcal{E}_3\), Brunella's very special foliation \(\mathcal{F}_3\), as described in Section~\ref{sobref3}. \end{proof}

The vector field (\ref{vf:lg}) appearing in this proof is one of the scarce quadratic homogeneous ones having single-valued solutions (see~\cite{guillot-semicomplete} for a general discussion of such vector fields). 
\begin{proof}[Fourth proof]

The map \(\Phi:\PP^2\dashrightarrow \PP^2\) given by
\begin{multline*}(X:Y:Z)\mapsto ((X+\omega Y+\omega^2)^2(X+Y+1):\\:(X+Y+1 )^2(X+\omega^2Y+\omega):(X+\omega^2Y+\omega)^2(X+\omega \zeta Y+\omega^2))\end{multline*}
realizes the quotient by the cyclic permutation of the coordinates. The pull-back of the form (\ref{eq:forma_f3}) by it is the form (\ref{eq:nosso_f3}). (The map \(\Phi\) is the composition \(\pi\circ j^{-1}\) in the previous proof.) \end{proof}

\subsection{The de Jonquières symmetry}\label{sec:dejonq}
Let us now discuss the fact that \emph{the transformation \(J_4\) of Eq.~(\ref{J4}) is a birational involution of preserving \(\mathcal{J}\), that corresponds to Brunella's foliated flop.} We have already calculated the flop in Pereira's model (\ref{eq:f3}), and through the explicit map of the second proof in the previous subsection, we may establish this fact. It may also be calculated from the automorphism \(Q\) of Eq.~(\ref{eq:cremona}), via our third proof. It can also be established by a direct calculation that \(J_4\) is an automorphism of \(\mathcal{J}\): for the effect of \(J_4\) on~\(\Omega\),
\[ J_4^*(\Omega) = (x-y)^3 (y-z)^3 (x-z)^3 (x y^2+y z^2+x^2 z-3 x y z) \cdot \Omega; \]
and it follows that \(J_4\) is indeed a birational automorphism of the foliation defined by $\Omega =0$.  
(The first item of Theorem~\ref{thm:bir_aut} implies that it corresponds to Brunella's foliated flop, up to a cyclic permutation of the coordinates.) Together with the previous results, this establishes Theorem~\ref{f3}.

We can give further information on \(J_4\). Its Jacobian is 
\[ 4 (x-y)^2 (y-z)^2 (x-z)^2 (x y^2+y z^2+x^2 z-3 x y z);\]
its fixed curve is the quartic 
\(x^2 y^2 + y^2 z^2 + x^2 z^2 -x y^3 -y z^3 - z x^3 = 0\),
of geometrical genus two, having a node at $(1:1:1)$. 
The involution $J_4$ preserves the lines through $(1:1:1)$, and is thus of de Jonqui\`eres type. The homaloidal system of $J_4$ 
is formed by  quartic plane curves with an ordinary triple point in $(1:1:1)$, and tangencies  at $(0:0:1)$, $(0:1:0)$, and $( 0:0:1)$. In the sense of \cite{nguyen-thesis}, this de Jonqui\`eres map is of type $(4; 3; 1^2; 1^2; 1^2)$, of type 78.1 in Table~5.1, p.~102.

Figure \ref{flopMeubisbis} shows the elimination of indeterminacies  of the de Jonqui\`eres map: on top, the projective plane blown-up seven times is portrayed, and the two $(-1)$-curves which are interchanged under the flop are singled out (compare with Figure~\ref{flopabstrato}).

\begin{figure}
\centering
\includegraphics[width=0.7\textwidth]{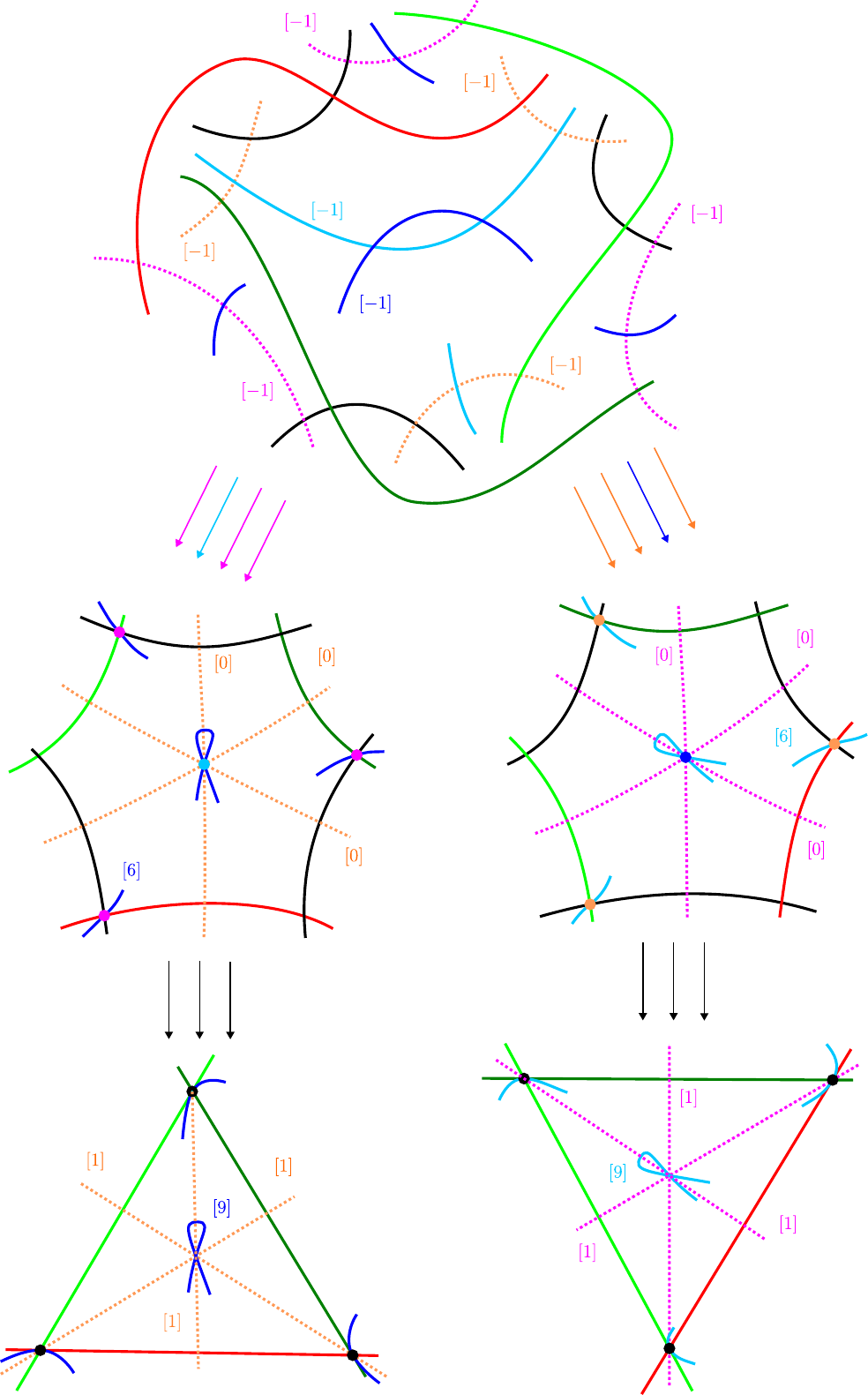}
\caption{Diagram of elimination of indeterminacies of $J_4$. The color of the arrow is the color of the exceptional line introduced by the blow-up.} 
\label{flopMeubisbis}
\end{figure}

\subsection{Factorization of the quartic de Jonqui\`eres involution}\label{factorJ4}

M. Noether established that every birational transformation of the complex projective plane may be factorized as a composition of standard quadratic Cremona involutions and linear automorphisms. A natural measure of the complexity of a birational map is thus the minimum number of standard quadratic maps appearing in such a factorization, called its \emph{ordinary quadratic length}. It follows from the previously discussed facts that the ordinary quadratic length of   \(J_4\) is three, since \(J_4\) is of type 78.1  in Table~5.1, p.~102 in \cite{nguyen-thesis}.  Our aim here is to present an explicit factorization of \(J_4\) into standard quadratic transformations and linear ones, in a way realizing its ordinary quadratic length.

The varied and peculiar  ways in which Cremona involutions may be composed to produce birational maps of small degrees bear witness to the complexity of Noether's factorization;  we refer the reader to recent works on the classification and  factorization  Cremona maps of degree three and four (\cite{CeDe}, \cite{Calabri-Nguyen}, \cite{nguyen-thesis}) for a direct exposure to these.

The quartic de Jonqui\`eres  symmetry \(J_4\)   Eq.~(\ref{J4}) was not obtained by trial and error, but deliberately built as a composition of  standard quadratic Cremona  maps and linear automorphisms following some guidelines. Since  in our planar model \(\mathcal{J}\) for $\mathcal{F}_3$  a nodal plane cubic represents the link, then, in order to represent the foliated flop as a Cremona transformation of the plane, it seemed plausible to obtain \(J_4\) as a composition of three  quadratic Cremona  maps, which:
\begin{itemize}
	\item gradually lower the degree of the cubic, from 3 to 2, from 2 to 1, and finally contract the line to a point  and, at the same time, 
	\item  introduce a straight line, then increase  its degree from 1  to 2, and finally  from 2 to 3, producing a nodal cubic.
\end{itemize}
By choosing changes of coordinates guided by Proposition~\ref{prop:multbir}, we succeeded in doing this. The results are presented in what follows. As a consequence, we will establish once again  that the involutive Cremona map  (\ref{J4})  represents the  foliated flop.

\subsubsection*{The first quadratic map and its effect on the foliation} 
We start with the foliation \(\mathcal{J}\) on \(\PP^2\) given by the form \(\Omega\) in Eq.~(\ref{eq:nosso_f3}), with its invariant nodal cubic \(C_3:y^2 x+z^2 y+z x^2-3 x y z =0\) and shall apply to it a quadratic Cremona map. 

Consider the linear automorphism $L_1(x: y: z) := (x : x+y: x+z ),$
which fixes \((0:0:1)\) and \((0:1:0 )\), and maps \((1:0:0)\) to \((1:1:1)\), and the standard quadratic Cremona map $Q$ of Eq.~(\ref{eq:cremona}). 
The strict transform of the foliation $\mathcal{J}$ by the quadratic Cremona map $Q\circ L_1^{-1}$ is the degree three foliation $\mathcal{J}'$ given by the vanishing of
\[\Omega'= y z (2 x y-y z+z^2+y^2-x z) \,\dd x-x z (z+y) (x+z)\, \dd y-x y (y-2 z) (x+y)\, \dd z. \]
 The invariant conic $ C_2: y^2-y z+x y+z^2 = 0$ 
is the strict transform of $C_3$. The birational image of the nodal point $(1:1:1)$ of $C_3$ is the $\mathcal{J}'$-invariant line $D_1: x = 0 $. Besides $C_2$ and~$D_1$, 
$\mathcal{J}'$ leaves invariant four straight lines. The foliations $\mathcal{J}$ and $\mathcal{J}'$ are depicted in Figure \ref{Cr1}.

\begin{figure}
\centering
\includegraphics[width=0.7\textwidth]{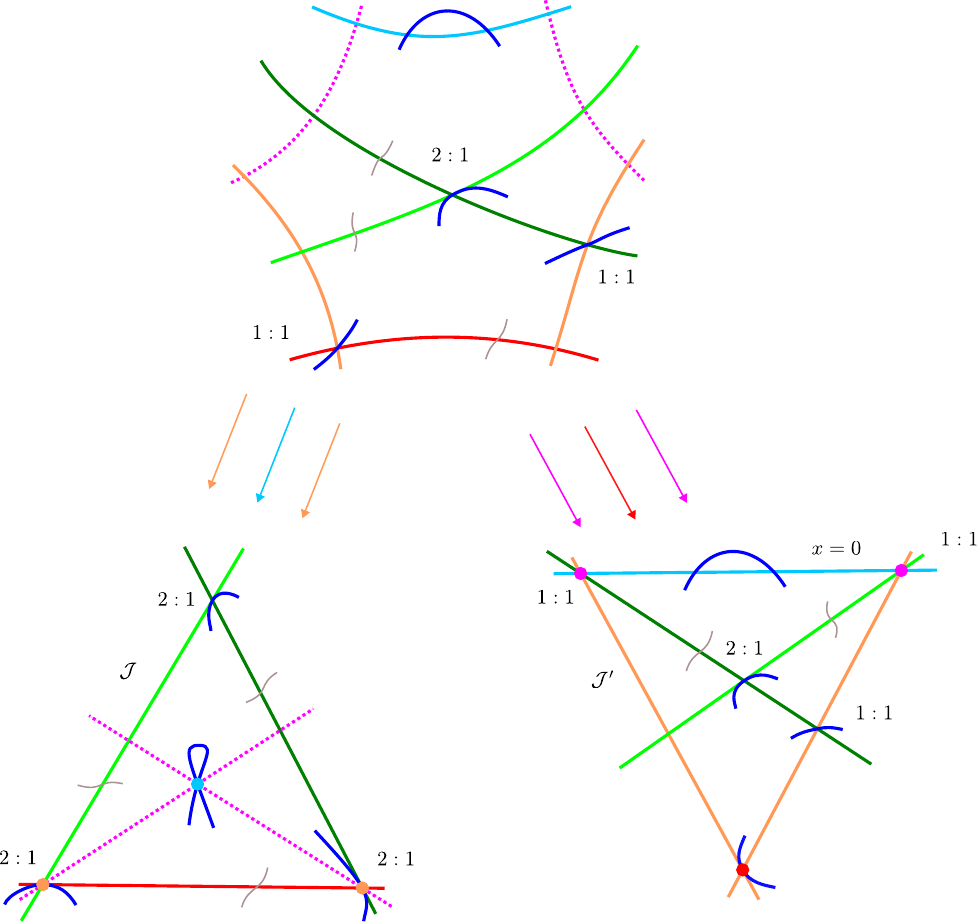}
\caption{Effect of the first quadratic Cremona map on the foliation; the indeterminacy points are in red and blue. The representation of the node is schematic; it is an acnode in the real plane.}
	\label{Cr1}
\end{figure}

The singularities of $\mathcal{J}'$ are: 
\begin{itemize}
\item at $(0:0:1)$, $(0:1:0)$, $(-1:1:0)$: radial points (indicated by $1:1$ in Figure \ref{Cr1}); 
\item at $(0: 1:-\omega )$, $(0: 1:-\omega^2)$, $(-1: 0: 1)$, $(1:-1:1)$, and $(-1:2:1)$: five non-degenerate singularities (the first two are the intersection $C_ 2 \cap D_1$);
\item at $(-1:1:1)$: with eigenvalues \(1:2\), linearizable; 
\item at $q= (1:0:0)$ (in red in the bottom-right of Figure~\ref{Cr1}): a quadratic non-dicritical singularity ($\nu(p)= l(p,\mathcal{J}') =2$). Its Milnor number $\mu(q,\mathcal{J}')= 4$ can be computed directly through formula (\ref{eq:minor_no_formula}), observing that there are three points with $\mu =1$ along the line blown down to it (in red in the top of Figure~\ref{Cr1}), or using Darboux's formula (Proposition~\ref{Darboux}), and taking into account that the other nine listed singularities have Milnor number $\mu =1$.
\end{itemize}

\subsubsection*{The second quadratic map and its effect on the foliation} Consider now the linear map $L_2(x: y: z) := (x-y+z: y-z: -z),$
which fixes \((1:0:0 )\), and maps \((0:0:1)\) and \((0:1:0)\) to \((-1:1:1 )\) and \((-1: 1: 0 )\), respectively. 
The strict transform of \(\mathcal{J}'\) by the quadratic map $Q\circ L_2^{ -1}$ is the degree three foliation \(\mathcal{J}''\) given by the vanishing of 
\begin{multline*}\Omega''= z(z-y)(2 y z-y^2-x z+2 x y)\,\dd x+xz(2 y z-z^2-2 x z+x y)\,\dd y+\\+x(y^2-y z+z^2)(x-y)\, \dd z.\end{multline*}

\begin{figure} 
\centering
\includegraphics[width=0.7\textwidth]{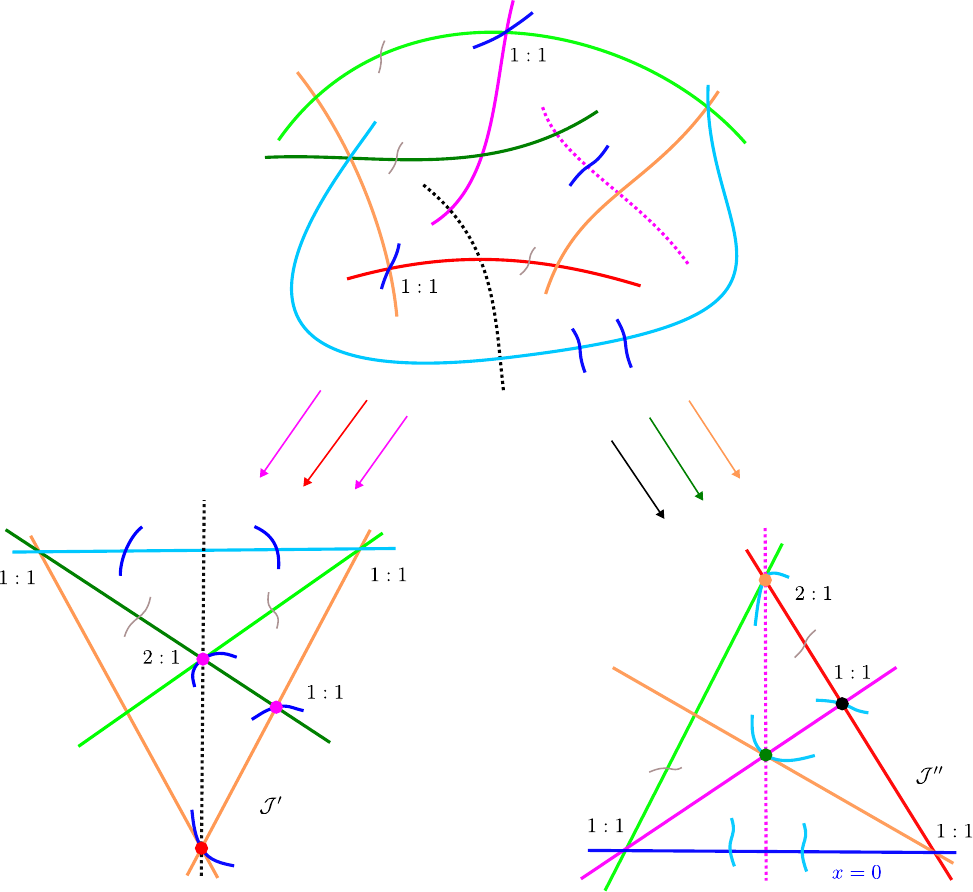} 
\caption{Effect of the second quadratic Cremona map on the foliation; the indeterminacy points are in red and magenta.}
	\label{Cr2}
\end{figure} 

This is portrayed in Figure~\ref{Cr2}. The singular set of $\mathcal{J}''$ is of exactly the same type as the one of~$\mathcal{J}'$, but the singularities appear in different positions (for instance, the quadratic non-dicritical singularity is the green point in the bottom-right of Figure~\ref{Cr2}, blow-down of the green line on top). The strict transform of the conic $C_2: y^2-y z+x y+z^2 = 0$ is the line $C_1: x-y+z = 0$, and the strict transform of the line $D_1: x = 0$ is the conic $D_2 = y z-x z+x y =0$.

\subsubsection*{The third quadratic map and its effect on the foliation} 
Consider now the linear map $L_3(x: y: z) := (x+z : z+y : y ),$
which fixes \((1:0:0)\), and maps \((0:0:1)\) and \((0:1:0 )\), to \((1:1:0 )\) and \((0:1:1)\), respectively. 
The strict transform of \(\mathcal{J}''\) by the quadratic map $Q \circ L_3^{-1}$ is the degree-two foliation \(\mathcal{J}'''\) given by the vanishing of 
\[ \Omega'''= z (x y+x z+y^2 x+z y+2 y^2)\, \dd x- x (2 z+y) (x+z)\, \dd y + x (y-z) (x-y)\, \dd z .\]
The line $C_1: x-y+z = 0 $ is contracted to the point $(1:0:0)$ by this third quadratic Cremona map, and the strict transform of the conic $D_2: y z-x z+x y = 0$ is the nodal cubic
$D_3: x y z+x z^2+y^2 z+y^2 x = 0$, whose node is at $(1:0:0)$. See Figure~\ref{Cr3}.
 
\begin{figure}
\centering
\includegraphics[width=0.7\textwidth]{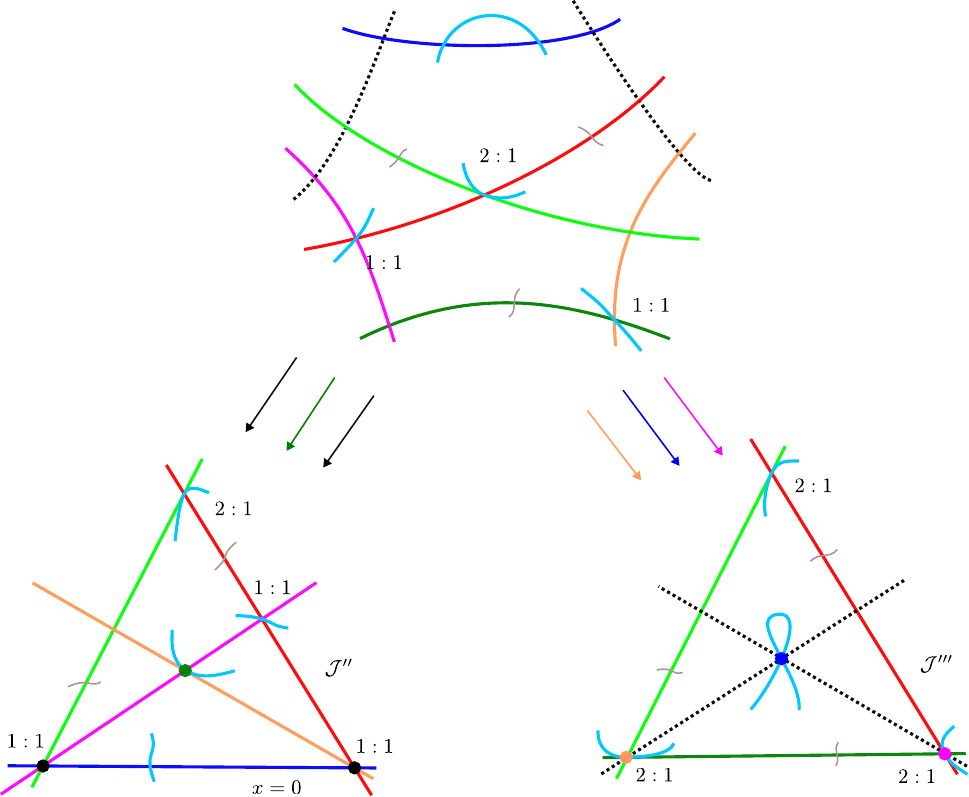}
\caption{Effect of the third Cremona map on the foliation. The indeterminacy points are in black and green.}
	\label{Cr3}
\end{figure}

We assert that this last foliation \(\mathcal{J}'''\) is isomorphic to the original foliation \(\Omega =0\) of Eq.~(\ref{eq:nosso_f3}). In fact, for the linear isomorphism 
$L_4(x,y,z):= (a x , \, a (x-z) , \, a ( y-x) )\), for $a = \frac{\sqrt{2}}{2} (1 + \ii)$, we have
$L_4^*( \Omega''' ) = \Omega$.

Finally, composing the three quadratic transformations $Q\circ L_j^{ -1}$, $j=1,2,3$, and $L_4^{-1}$, we obtain (after extracting common factors) the map \(J_4\) of Eq.~(\ref{J4}), 
\[J_4=L_4^{-1} \circ Q \circ L_3^{-1} \circ Q \circ L_2^{-1} \circ Q \circ L_1^{-1}.\]

We have thus shown that \(J_4\) can be factored as the composition of linear automorphisms and three ordinary quadratic  Cremona transformations.

\section{Quotients of linear foliations by standard quadratic Cremona involutions}\label{sec:quotcremona}

The exceptionality of the automorphisms of linear foliations leading to the foliations of Brunella and Santos may be also brought to light through the study of the birational symmetries of linear foliations; we will study this in Section~\ref{sec:automorphisms}. There, we will also see that most linear foliations of the plane have only one non-linear birational automorphism (up to linear conjugation), the standard quadratic Cremona involution. The question of understanding the associated quotient foliations follows naturally. In this section, we will prove Theorem~\ref{eq:quot_lin_cremona}, giving explicit plane models for these quotient foliations.

\subsection{Cayley's nodal cubic as the quotient under the standard quadratic involution}
Recall that \emph{Cayley's nodal cubic surface} is the surface \(M_3\) in \(\PP^3\) given in homogeneous coordinates \((\xi :\eta:\zeta:\theta)\) by
\begin{equation}\label{eq:cayley}\xi \eta \zeta + \xi \eta \theta + \xi \zeta \theta + \eta \zeta \theta = 0. \end{equation}
It has four singularities, which are \emph{nodal points} (this is, they are of type~\(A_1\)), at $(1:0:0:0)$, $(0:1:0:0)$, $(0:0:1:0)$ and $(0:0:0:1)$. It is the only cubic surface having four nodal points. It contains nine lines: six connecting a pair of nodes each, forming a tetrahedron; and three, coplanar, connecting pairs of points within the triple \((1:1:-1:-1)\), \((1:-1:1:-1)\), \((1:-1:-1:1)\), the lines $\xi +\eta = \zeta+\theta =0$, $\xi +\zeta=\eta+\theta=0$, and $\xi +\theta= \eta+\zeta=0$. The surface has no other lines. The group \(S_4\) acts on \(M_3\) by permuting the homogeneous coordinates, and naturally permutes all of the above objects. See Figure~\ref{fig:cayleycubic}.

\begin{figure}
\centering
\includegraphics[width=0.7\textwidth]{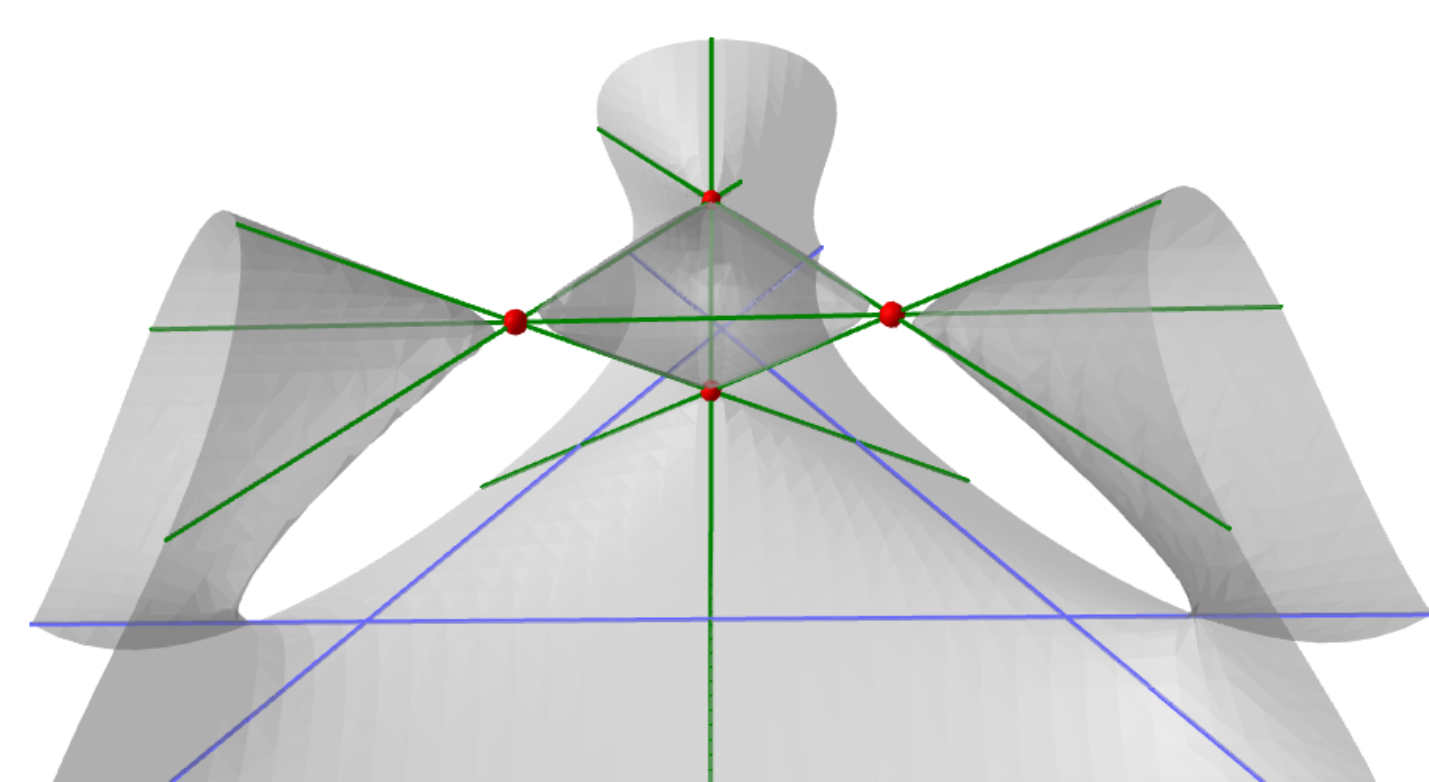} 
\caption{Cayley's nodal cubic surface.} \label{fig:cayleycubic}
\end{figure}

Our starting point will be the following fact. It has probably been known for a very long time, but the earliest reference we found was that of Emch~\cite[Section~II.A]{emch}.
\begin{proposition} The quotient of \(\PP^2\) under the action of the standard quadratic Cremona involution $Q(X:Y:Z) = (Y Z: ZX : X Y )$ is Cayley's nodal cubic surface.
\end{proposition}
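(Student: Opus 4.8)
The plan is to make the quotient map explicit using invariant functions. The involution $Q$ is defined away from the coordinate triangle $XYZ=0$, and on the open set where $XYZ\neq 0$ it acts by $Q(X:Y:Z)=(1/X:1/Y:1/Z)$ (in the affine normalization fixing, say, the plane $X=1$ is cleaner to think of in terms of the functions below). The key observation is that the three functions
\[
u=\frac{X}{Y}+\frac{Y}{X},\qquad v=\frac{Y}{Z}+\frac{Z}{Y},\qquad w=\frac{Z}{X}+\frac{X}{Z}
\]
are each invariant under $Q$ (since $Q$ inverts each ratio $X/Y$, etc.), and they generate the field of $Q$-invariant rational functions on $\PP^2$ — this I would check by a degree/transcendence-degree count, since $[\CC(\PP^2):\CC(\PP^2)^Q]=2$ and $u,v,w$ are manifestly not all $Q$-equivariant to each other, so two of them already separate generic orbits, with the third providing the single algebraic relation defining the image surface.

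First I would compute the relation among $u,v,w$. Writing $a=X/Y$, $b=Y/Z$, $c=Z/X$ with $abc=1$, we have $u=a+a^{-1}$, $v=b+b^{-1}$, $w=c+c^{-1}=ab+a^{-1}b^{-1}$. A direct elimination of $a,b$ (using $abc=1$) from these three symmetric expressions yields a polynomial identity; the expected outcome is the classical one,
\[
u^2+v^2+w^2-uvw-4=0,
\]
which is a well-known model of a cubic surface with four $A_1$ singularities (at $(\pm 2,\pm 2,\pm 2)$ with an even number of minus signs, i.e. the points where all three of $a,b,c$ equal $\pm1$). So the quotient $\PP^2/Q$ is birationally, and in fact isomorphically on the level of these coordinates, the affine surface $\{u^2+v^2+w^2-uvw-4=0\}$; I would then exhibit the explicit projective linear change of coordinates in $\PP^3$ carrying this cubic to Cayley's form \eqref{eq:cayley}. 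Indeed, setting $u=\eta+\zeta$-type combinations — concretely, the substitution that sends the four nodes $(\pm2,\pm2,\pm2)$ to the four coordinate vertices of $\PP^3$ — transforms $u^2+v^2+w^2-uvw-4$ into (a scalar multiple of) $\xi\eta\zeta+\xi\eta\theta+\xi\zeta\theta+\eta\zeta\theta$. I would record this linear map explicitly and verify the identity by substitution.

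It remains to argue that this birational identification is the quotient map, i.e. that it is generically one-to-one from $\PP^2/Q$ and that the target is genuinely the whole Cayley cubic (not a proper subvariety or a nontrivial cover). Generic injectivity follows from the fact that $u,v,w$ separate generic $Q$-orbits: given generic values of $(u,v,w)$ one recovers $a$ (two choices $a^{\pm1}$, which is exactly the $Q$-ambiguity), then $b$ from $v$ and $w$ compatibly, so the fiber of $(u,v,w)$ is a single $Q$-orbit. Since both $\PP^2/Q$ and the Cayley cubic are irreducible projective surfaces of the same dimension and we have a birational morphism between them that is generically bijective, and the Cayley cubic is normal only at its smooth points — but $\PP^2/Q$ has precisely four quotient singularities coming from the four $Q$-fixed points $(1:1:1),(1:1:-1),(1:-1:1),(-1:1:1)$, each of type $A_1$ (the local linearization of the involution at a fixed point has eigenvalues $1,-1$, so the quotient is a node) — the singular loci match up point for point, and one concludes the map is an isomorphism.

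The step I expect to be the main obstacle is the careful bookkeeping at the boundary, namely verifying that the rational map $\PP^2\dashrightarrow\{u^2+v^2+w^2-uvw-4=0\}\subset\PP^3$ extends to a morphism on all of $\PP^2$ and that its image is exactly Cayley's cubic including all nine lines — the coordinate triangle $XYZ=0$ (the indeterminacy locus of $Q$ itself) gets blown down, and tracking where its three lines and three vertices land, and matching them with the six tetrahedral lines and three coplanar lines of $M_3$, requires homogenizing the invariants $u,v,w$ (e.g. as $X^2Z+YZ\cdot\text{(sym)}$-type degree-three forms) and a somewhat delicate limit computation. Everything else is routine elimination and linear algebra.
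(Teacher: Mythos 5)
Your approach is essentially the paper's, presented affinely: your invariants are the paper's quotient map read in the chart where $XYZ\neq 0$, since for the paper's map $(\xi:\eta:\zeta:\theta)=\bigl(X(Y^2+Z^2):Y(X^2+Z^2):Z(X^2+Y^2):2XYZ\bigr)$ one has $v=2\xi/\theta$, $w=2\eta/\theta$, $u=2\zeta/\theta$, and your relation $u^2+v^2+w^2-uvw-4=0$ is exactly the paper's intermediate cubic $\theta(\xi^2+\eta^2+\zeta^2)-2\xi\eta\zeta-\theta^3=0$ in the chart $\theta\neq0$, which the paper then carries to Cayley's form by an explicit linear map, just as you propose to do. Your generic-fibre argument (recovering the pair $(a,b)$ from $(u,v,w)$ up to simultaneous inversion) is a correct substitute for the paper's bare assertion that the map is a quotient map for $Q$.

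Two local points need repair. First, at each of the four fixed points the differential of $Q$ is $-\mathrm{id}$, not $\mathrm{diag}(1,-1)$: in the chart $X=1$ one has $Q(y,z)=(1/y,1/z)$, whose differential at $y,z=\pm1$ is $\mathrm{diag}(-1,-1)$. Eigenvalues $1,-1$ would give a curve of fixed points through the point and a \emph{smooth} quotient, so your stated justification for the four $A_1$ singularities fails as written, although the conclusion is correct once the eigenvalues are fixed. Second, the phrase ``the Cayley cubic is normal only at its smooth points'' is off: the Cayley cubic is normal everywhere (a hypersurface with isolated singularities), and normality of the target is precisely what lets you conclude, via Zariski's main theorem, that a generically bijective birational morphism from a normal model of the quotient onto it is an isomorphism. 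Also note that the map does not extend to a morphism on all of $\PP^2$ — the three coordinate vertices are base points — so the boundary bookkeeping you anticipate is best done after lifting $Q$ to a biregular involution of the blow-up of $\PP^2$ at those vertices; the proposition, like the paper's proof of it, is to be read in that (birational) sense.
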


\begin{proof}
Consider the \(Q\)-invariant rational map
$\pi: \PP^2 \dashrightarrow \PP^3$, 
\[(\xi :\eta:\zeta:\theta)= (X (Y^2+Z^2): Y (X^2+Z^2):Z (X^2+Y^2):2X Y Z),\]
whose coordinates form a basis for the space of homogeneous cubic polynomials \(P\) such that
\(P(YZ,ZX,XY)=XYZ\,P(X,Y,Z)\).  The rational image of the plane by $\pi$ is a (singular) complex surface  $\pi(\PP^2)$ of $\PP^3$ and  $\pi$ is a  quotient map for the action of $Q$. The surface $\pi(\PP^2)$  is given by the cubic equation
	\[\theta(\xi^2+\eta^2+\zeta^2)-2\xi \eta \zeta-\theta^3 = 0.\] 
The four fixed points of $Q$ produce, via $\pi$, four singular points of type $A_{1}$ for $\pi(\PP^2)$, placed at $(-1:-1:1:1)$, $(-1:1:-1:1)$, $(1:-1:-1:1)$ and $(1:1:1:1)$. The linear transformation 
\[L(\xi :\eta:\zeta:\theta) = (\theta +\xi -\eta-\zeta:\theta-\xi +\eta-\zeta: \theta-\xi -\eta+\zeta: \theta+\xi +\eta+\zeta)\] 
establishes a linear isomorphism between $\pi(\PP^2)$ and Cayley's nodal cubic (\ref{eq:cayley}), and maps the above singular points to the nodes on Cayley's surface. In this way, \(L\circ \pi\) realizes the quotient of \(\PP^2\) under the action of the standard quadratic Cremona involution as Cayley's nodal cubic surface.
\end{proof}

The finite quotients of the plane are rational surfaces, that  is, birationally equivalent to the plane. Let us show an explicit equivalence for Cayley's cubic. The strict  transform  of Cayley's cubic surface $M_3$  by the  cubic  involutive  Cremona map  of $\PP^3$  
\[C(\xi :\eta:\zeta:\theta) =  (\eta  \zeta \theta:  \xi  \zeta \theta:  \xi  \eta \theta:  \xi  \eta \zeta  ),\]
is the  plane $A\subset \PP^3$ with  equation $\xi  + \eta  + \zeta + \theta  = 0$. Consider the mapping $j:  \PP^2 \to A$,
\[(X:Y:Z)\mapsto(X:Y:Z:-X-Y-Z).\]
Let \(\Pi:\PP^2\dashrightarrow\PP^2\) be the composition \(j^{-1}\circ C\circ L\circ\pi\), which reads:
\begin{equation}\label{eq:quot_cremona} \Pi (X : Y  : Z)=  (  (Z+Y) (Z+X) (Y-X) : (Z+Y) (X-Z) (X+Y) : (Z-Y) (Z+X) (X+Y)  ),\end{equation}
and has Jacobian 
\begin{equation}\label{eq:Jac_pi}   -12 (Y-X) (Y+X)(Z-Y)(Z+Y)(Z+X)(Z-X).\end{equation}
We have thus established:	
\begin{proposition}\label{planetoplane}The map \(\Pi\) in (\ref{eq:quot_cremona}) realizes \(\PP^2\) as a birational model for the quotient of \(\PP^2\) under the action of the standard quadratic Cremona involution.\end{proposition}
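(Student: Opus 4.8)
The plan is to recognise the statement as an immediate consequence of the factorisation $\Pi=j^{-1}\circ C\circ L\circ\pi$ written above, once one checks that everything composed with the quotient map $\pi$ is a birational isomorphism. By the preceding proposition, $\pi$ is $Q$-invariant and $L\circ\pi\colon\PP^2\dashrightarrow M_3$ realises the quotient of $\PP^2$ by $Q$ as Cayley's nodal cubic surface $M_3$ of Eq.~(\ref{eq:cayley}), with $L$ a linear isomorphism carrying $\pi(\PP^2)$ onto $M_3$. So it remains to see that $C$ restricts to a birational isomorphism $M_3\dashrightarrow A$ onto the plane $A\colon\xi+\eta+\zeta+\theta=0$, and that $j\colon(X:Y:Z)\mapsto(X:Y:Z:-X-Y-Z)$ is a linear, hence biregular, isomorphism of $\PP^2$ onto $A$; granting these, $\Pi$ is $Q$-invariant and induces a composition of birational isomorphisms $\PP^2/Q\to\PP^2$, which is the claim.

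For the first point, I would first note that $C(M_3)\subseteq A$: for a point of $M_3$, the sum of the four coordinates of $C(\xi:\eta:\zeta:\theta)=(\eta\zeta\theta:\xi\zeta\theta:\xi\eta\theta:\xi\eta\zeta)$ is precisely $\xi\eta\zeta+\xi\eta\theta+\xi\zeta\theta+\eta\zeta\theta$, which vanishes on $M_3$. Then I would argue that $C|_{M_3}$ is birational onto its image: $C$ is an involutive Cremona map of $\PP^3$ whose indeterminacy and contracted loci lie on the coordinate hyperplanes, and the irreducible cubic surface $M_3$ is neither one of these hyperplanes nor contracted, so a generic $p\in M_3$ satisfies $C(C(p))=p$ and $C|_{M_3}$ is generically injective; its image is therefore a dense subset of the irreducible surface $A$, giving the desired birational isomorphism $M_3\dashrightarrow A$. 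The claim about $\Pi$ follows, and the explicit formula (\ref{eq:quot_cremona}) together with its Jacobian (\ref{eq:Jac_pi}) are then obtained by substituting the closed expressions for $\pi$, $L$, $C$ and $j^{-1}$, cancelling the common factor that appears in the composite, and differentiating.

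The argument is short and presents no real obstacle; the only places calling for attention are checking in the previous paragraph that $C$ does not contract the surface $M_3$ (so that $C|_{M_3}$ is genuinely dominant onto the plane $A$, and not onto a curve or a point), and, on the computational side, correctly identifying and removing the spurious common factor in $j^{-1}\circ C\circ L\circ\pi$ so that $\Pi$ comes out of degree three as in (\ref{eq:quot_cremona}), together with the routine chain-rule evaluation yielding (\ref{eq:Jac_pi}).
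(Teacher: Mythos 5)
Your proposal is correct and follows essentially the same route as the paper: factor $\Pi=j^{-1}\circ C\circ L\circ\pi$, invoke the preceding proposition for $L\circ\pi$, and observe that the involutive cubic Cremona map $C$ carries Cayley's cubic $M_3$ birationally onto the plane $A\colon\xi+\eta+\zeta+\theta=0$ (your remark that the coordinate sum of $C(p)$ is exactly the Cayley cubic polynomial is precisely the computation underlying the paper's terse statement about the strict transform). The only care points you flag — that $C$ does not contract $M_3$ and the cancellation of the common factor yielding the degree-three formula (\ref{eq:quot_cremona}) — are handled correctly, so nothing further is needed.
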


\subsection{The quotients of degree-one foliations}  After these preliminaries and Proposition~\ref{planetoplane}, we are ready to prove the sought result.

\begin{proof}[Proof of Theorem~\ref{cremona-elliptic}]
Let \(\lambda\in\CC\setminus\{0,1\}\), and consider the degree-one foliation \(\mathcal{F}_\lambda\) on \(\PP^2\) induced by
\begin{equation}\label{eq:linfol}w_{\lambda}= \lambda Y Z\,\dd X- X Z \,\dd Y+(1-\lambda ) X Y \,\dd Z.\end{equation}
It is preserved by the standard quadratic Cremona map~\(Q\). There is a remarkable configuration in \(\PP^2\) associated to the Cremona involution, the complete quadrangle associated to its four fixed points, presented in Figure~\ref{fig:completequad}. In it we have, in red, the four fixed points of $Q$, placed at \((1:1:1)\), \((-1:1:1)\), \((1:-1:1)\) and \((1:1:-1)\). The dashed green lines are those of the configuration \(\mathcal{L}_6\), the six lines joining pairs of fixed points of~\(Q\), along which the Jacobian (\ref{eq:Jac_pi}) of \(\Pi\) vanishes. They are transverse to the foliation and preserved by~\(Q\), which restricts to each one of them as an involution. The pairs of lines of \(\mathcal{L}_6\) that do not share a common fixed point of \(Q\) intersect, by pairs, at the three indeterminacy points of \(Q\), \((1:0:0)\), \((0:0:1)\) and \((0:0:1)\), the points in blue in Figure~\ref{fig:completequad}. The three lines through these points, also in blue, are those of the coordinate triangle \(\Delta_3:XYZ=0\), and are \(\mathcal{F}_\lambda\)-invariant. The Cremona involution \(Q\) contracts these lines, while blowing up the vertices of the triangle; it exchanges a line of the triangle with its opposite vertex. Figure \ref{fig:linfol} focuses in the coordinate triangle \(\Delta_3\) and the eigenvalues of the vector field at the singular points of \(\mathcal{F}_\lambda\).

The foliation \(\mathcal{F}_\lambda\) induces a foliation on the quotient of \(\PP^2\) under the action of~\(Q\). In the birational plane model for this quotient given by the map \(\Pi\) (Proposition~\ref{planetoplane}), this foliation is the foliation \(\mathcal{G}_\lambda\) given by the form (\ref{eq:quot_lin_cremona}), a fact that can be established through a direct calculation: the pull-back of the form (\ref{eq:quot_lin_cremona}) via the map (\ref{eq:quot_cremona}) is the form (\ref{eq:linfol}). \end{proof}

\begin{figure}
	\centering
	\includegraphics[width=0.5\textwidth]{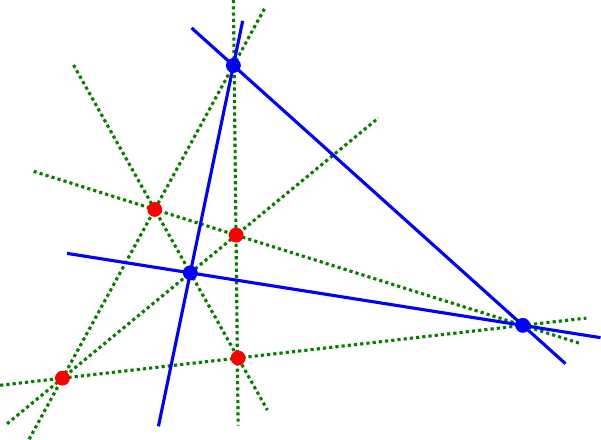} 
	\caption{The complete quadrangle associated to the four fixed points of the standard quadratic Cremona involution.}\label{fig:completequad}
\end{figure}

\begin{figure}
	\centering
	\includegraphics[width=0.4\textwidth]{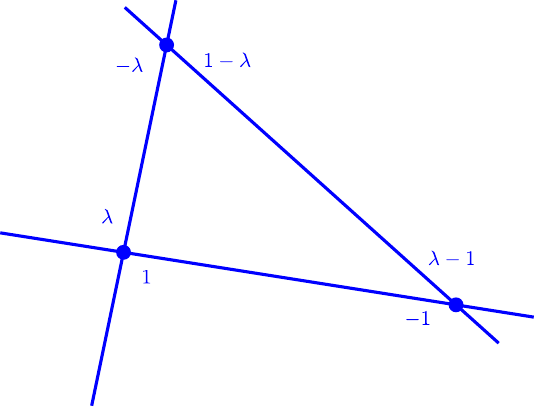} 
	\caption{The coordinate triangle, invariant by \(\mathcal{F}_\lambda\), and the corresponding eigenvalues.}
	\label{fig:linfol}
\end{figure}

Let us explain how the one-form (\ref{eq:quot_lin_cremona}) was obtained, and describe the geometry of both~\(\Pi\) and of the foliation \(\mathcal{G}_\lambda\). Consider the vector space \(\Omega_3\) of one-forms on \(\CC^3\) of the form \(A\,\dd x + B \,\dd y + C \,\dd z\), with \(A\), \(B\) and \(C\) homogeneous polynomials of degree four in \(x\), \(y\), and \(z\). Consider also the linear homogeneous vector fields 
\[E   = X\, \del{X} + Y\, \del{Y} + Z\,\del{Z} \text{ and } V =X\,\del{X} + \lambda^2 Y \,\del{Y} - \lambda Z \,\del{Z},\]
which are linearly independent on a Zariski-open subset, and which are in the kernel of the form (\ref{eq:linfol}) generating~\(\mathcal{F}_\lambda\). Within \(\Omega_3\), the elements \(w\) for which the conditions \((\Pi^*w)E\equiv 0\) and \((\Pi^*w)V\equiv 0\) hold form a linear subspace, that may be defined by explicit linear equations on the coefficients of the polynomials \(A\), \(B\) and~\(C\). By solving this system, this subspace is found to have dimension one, and to be generated by the form~(\ref{eq:quot_lin_cremona}). 

The foliation \(\mathcal{G}_\lambda\) is tangent to a remarkable configuration, independent of \(\lambda\), which we present in Figure~\ref{fig:conf-quot-cre}. On the target plane of \(\Pi\), with coordinates \((x:y,z)\), consider the quadrilateral formed by the lines
\[x y z (x+y+z) = 0 \]
(in red in Figure~\ref{fig:conf-quot-cre}), together with its six \emph{vertices}, the points of intersection of each pair of lines of the quadrilateral: 
\((0:0:1)\), \((0:1:0)\), \((1:0:0)\), \((-1:0:1)\), \((0:-1:1)\) and \((-1:1:0)\), in green in Figure~\ref{fig:conf-quot-cre}. These six points come in three pairs (pairs without a common line), and each pair determines a line; these are the \emph{diagonals}
\[(x+y) (x+z) (y+z) =0,\]
in blue in Figure~\ref{fig:conf-quot-cre}. The three points of intersection of the diagonals are the \emph{diagonal points}, \((-1:1:1)\), \((1:-1:1)\) and \((1:1:-1)\). Each line of the quadrilateral is incident to three vertices. Each diagonal is incident to two vertices and two diagonal points. Each vertex lies on two sides of the quadrilateral and one diagonal. Each diagonal point lies on two diagonals. 

\begin{figure}
\centering
\includegraphics[width=0.5\textwidth]{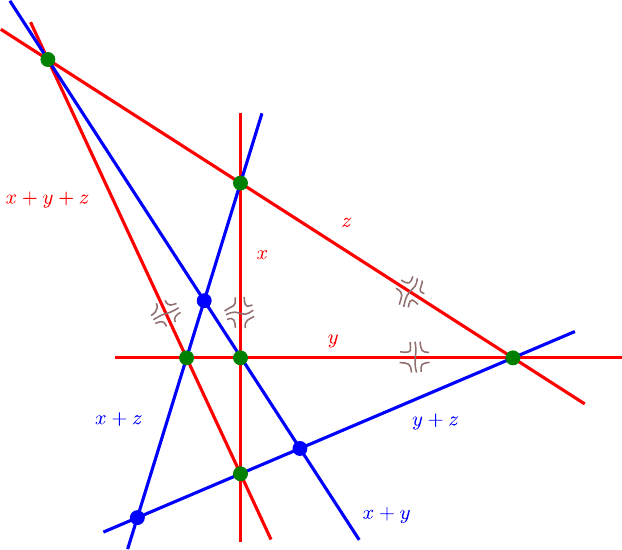} 
\caption{The complete quadrilateral and its diagonals, in the model after quotient by the involution. The lines appear in red, the vertices in green, the diagonals and the diagonal points in blue.} \label{fig:conf-quot-cre}
\end{figure}

The seven lines of this configuration are tangent to the foliation \(\mathcal{G}_\lambda\), and its nine points are singular points of the foliation, which has four further singular points, one on each line of the quadrilateral, with eigenvalues \(-2:1\), and whose position depends upon \(\lambda\). These account for all thirteen singular points of the degree three foliation~\(\mathcal{G}_\lambda\), the number of singularities of a degree three foliation in the plane (counted with multiplicity).

The four fixed points of \(Q\) are mapped by \(L\circ \pi\) to the nodes of \(M_3\), which are in turn mapped by \(j^{-1}\circ C\) to the four sides of the quadrilateral (here, and in what follows, when we refer to the image of a curve by a rational map we always mean its strict transform). The six lines in \(\mathcal{L}_6\) joining these by pairs are mapped by \(L\circ \pi\) to the six edges of the tetrahedron in~\(M_3\), and then by \(j^{-1}\circ C\) to the six vertices of the quadrilateral, which are radial singularities for \(\mathcal{G}_\lambda\), as expected from the fact that the lines of \(\mathcal{L}_6\) are transverse to the foliation. The strict transform by $L \circ \pi$ of each of the lines of the coordinate triangle \(\Delta_3\) is one of the three coplanar lines of $M_3$ that do not pass through its singular points (for $X=0$, the line $\xi +\eta = \zeta+\theta =0$; for $Y=0$, the line $\xi +\zeta=\eta+\theta=0$; and for $Z=0$, the line $\xi +\theta= \eta+\zeta=0$). These are then mapped by \(j^{-1}\circ C\) to the three diagonals. Figure~\ref{fig:analysis_quot_lin_inv} focuses on the triangle formed by these diagonals, and presents the eigenvalues of the linear parts of the singularities at its vertices.

\begin{figure}
\centering
\includegraphics[width=0.4\textwidth]{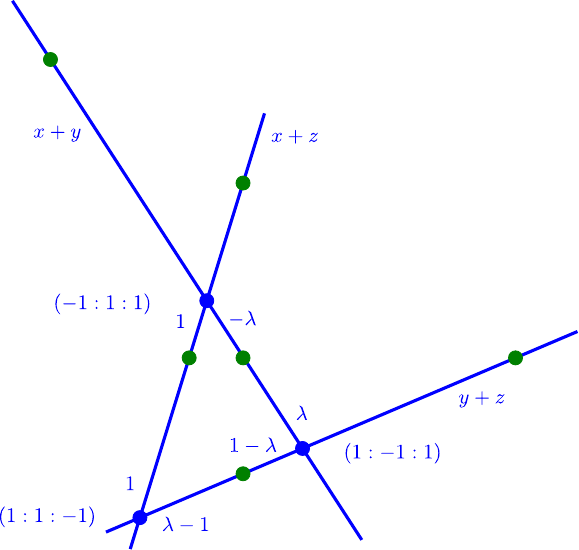}
\caption{Ratios of eigenvalues at the singularities of \(\mathcal{G}_\lambda\) on the blue lines.}
	\label{fig:analysis_quot_lin_inv}
\end{figure}

\begin{remark} The classical \emph{del Pezzo surfaces of degree three} are the images of the plane under a system of cubics passing by six points in general position. Although the six triple points of the previous arrangement (in green in Figure~\ref{fig:conf-quot-cre}) are not in general position, they impose independent conditions on cubics, and define a rational map from the plane to a singular cubic surface of~$\PP^3$, linearly isomorphic to Cayley's nodal cubic. Therefore, the foliations $\mathcal{G}_{\lambda} $ can be regarded as foliations of a singular del Pezzo surface.
\end{remark}

By blowing up the six vertices of the quadrilateral (the triple points of the configuration) each one of its four lines becomes a curve of self-intersection~\(-2\), corresponding to the resolution of a singularity of type~\(A_1\). The triangle formed by the diagonals becomes a cycle of three curves of self-intersection~\(-1\).

\section{The groups of birational automorphisms of the special quotient foliations}\label{sec:automorphisms}

Theorem~\ref{thm:bir_aut} will be proved in this section. To a holomorphic foliation by curves on an algebraic surface, we can associate its leaf space, the space resulting from identifying transversals to the foliation by the holonomy relation. It is a complex not-necessarily-Hausdorff manifold. If not every leaf of the foliation is contained in an algebraic curve, then, by a theorem of Jouanolou and Ghys~\cite{Ghys-Jouanolou}, there are at most finitely many algebraic curves invariant by the foliation. The leaves that are not contained in an algebraic curve (the Zariski-dense leaves) form an open subset of the leaf space. This \emph{space of Zariski-dense leaves} is a birational invariant of the foliated surface, and the group of birational transformations preserving the foliation acts holomorphically on it. In the cases we consider, these spaces are either elliptic curves or their quotients under the action of a finite group acting with fixed points (\emph{elliptic orbifolds}), and, for all of these, the groups of biholomorphisms can be easily described. This will be the starting point for the proof of Theorem~\ref{thm:bir_aut}.

\subsection{Birational symmetries of linear foliations} \label{sec:linfol-bir} We begin by describing the groups of birational automorphisms of the hyperbolic linear foliations of the projective plane. 

Let~\(\HH=\{\tau\in\CC\mid \Im(\tau)>0\}\). For \(\tau\in \HH\), let \(\mathcal{L}_\tau\) be the linear foliation on \(\PP^2\) given in the chart \((x:y:1)\) by the vector field \(X_\tau=\tau x\,\indel{x}+y\,\indel{y}\). Consider two actions of \(\mathrm{SL}(2,\ZZ)\): the first, by fractional linear transformations on \(\HH\),
\begin{equation}\label{act:mobius}\left(\begin{array}{cc} a & b \\ c & d \end{array}\right)\cdot\tau=\frac{a\tau+b}{c\tau+d};\end{equation}
and, the second, by \emph{monomial} birational transformations on \(\PP^2\),
\begin{equation}\label{action:monomial}\left(\begin{array}{cc} a & b \\ c & d \end{array}\right)\cdot(x:y:1)=(x^ay^b:x^cy^d:1).\end{equation}
With respect to these, for \(A\in \mathrm{SL}(2,\ZZ)\), 
\(A_*X_\tau=(c\tau+d)X_{A\cdot \tau}\), and thus
\begin{equation}\label{action:modular}
	A_*\mathcal{L}_\tau=\mathcal{L}_{A\cdot \tau}.\end{equation}
In particular, if \(A\in \mathrm{SL}(2,\ZZ)\) stabilizes \(\tau\in\HH\) via the action (\ref{act:mobius}), its action on \(\PP^2\) via~(\ref{action:monomial}) is a birational automorphism of \(\mathcal{L}_\tau\). For example, as we have seen in Section~\ref{sec:quotcremona}, the action of \(\left(\begin{array}{rr} -1 & 0 \\ 0 & -1 \end{array}\right)\) via~(\ref{action:monomial}), the standard quadratic Cremona transformation, is a birational automorphism of~\(\mathcal{L}_\tau\) for every \(\tau\in\HH\).

We may describe the group of birational automorphisms of \(\PP^2\) that preserve~\(\mathcal{L}_\tau\):

\begin{theorem}\label{thm:birlin} Let \(\tau\in \HH\). Let \(\mathrm{Bir}(\PP^2,\mathcal{L}_\tau)\) be the group of birational transformations of \(\PP^2\) that preserve \(\mathcal{L}_\tau\). Let \(G_0^\tau\subset\mathrm{PGL}(3,\CC)\cap \mathrm{Bir}(\PP^2,\mathcal{L}_\tau)\) be the subgroup generated by the flows of \(X_\tau\) and \(x\,\indel{x}\). Let \(I_\tau\subset \mathrm{SL}(2,\ZZ)\) be the stabilizer of \(\tau\) under the action~(\ref{act:mobius}). The group \(\mathrm{Bir}(\PP^2,\mathcal{L}_\tau)\) is the semidirect product \( I_\tau \ltimes G_0^\tau\), with \(I_\tau\) acting birationally on \(\PP^2\) via the monomial action~(\ref{action:monomial}).
\end{theorem}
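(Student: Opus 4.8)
The plan is to identify the space of Zariski-dense leaves of $\mathcal{L}_\tau$ and let the functoriality of this construction pin down $\mathrm{Bir}(\PP^2,\mathcal{L}_\tau)$. First I would observe that the map $(x:y:1)\mapsto [x:y]\in\PP^1$ is a rational first integral only when $\tau\in\mathbb{Q}$, so for $\tau\in\HH$ the foliation $\mathcal{L}_\tau$ has no rational first integral; its only invariant algebraic curves are the three coordinate lines (this can be checked directly, since an invariant curve must be a union of orbit closures of the torus $(x,y)\mapsto(\lambda^\tau x,\lambda y)$, and the only algebraic ones are the coordinate axes). By the theorem of Jouanolou and Ghys, the space of Zariski-dense leaves $\mathcal{Z}(\mathcal{L}_\tau)$ is well defined, and the complement of the coordinate triangle is $(\CC^*)^2$ foliated by the orbits of $X_\tau$; taking the logarithm $(x,y)\mapsto(\log x,\log y)$ (well defined up to the lattice $2\pi\ii(\ZZ\oplus\ZZ)$ acting by translation), the leaves are the lines of slope $\tau$, so $\mathcal{Z}(\mathcal{L}_\tau)$ is the quotient of $\CC$ by the lattice $\Lambda_\tau=2\pi\ii(\ZZ+\ZZ\tau)$, namely the elliptic curve $E_\tau=\CC/\Lambda_\tau$. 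I would record the induced \emph{transverse structure}: the projection $(\CC^*)^2\to E_\tau$ is an algebraic fibration by the leaves, so in particular $\mathcal{Z}(\mathcal{L}_\tau)$ carries the genuine complex structure of an elliptic curve (not just an orbifold), a point that will matter below.

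Next I would set up the restriction homomorphism $\rho:\mathrm{Bir}(\PP^2,\mathcal{L}_\tau)\to \mathrm{Aut}(E_\tau)$, sending a birational automorphism $\phi$ preserving $\mathcal{L}_\tau$ to the biholomorphism it induces on the space of Zariski-dense leaves (this is the birational invariance of $\mathcal{Z}$ invoked in the section introduction). The group $\mathrm{Aut}(E_\tau)$ is the extension of $E_\tau$ (translations) by the finite cyclic group of automorphisms fixing $0$, which is $\ZZ/2$ in general, $\ZZ/4$ if $\tau=\ii$, and $\ZZ/6$ if $\tau=e^{\pi\ii/3}$. The two main tasks are then: (i) compute the image of $\rho$, and (ii) compute its kernel. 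For the kernel: an element of $\ker\rho$ fixes every Zariski-dense leaf, hence preserves each member of the pencil of orbit closures; a birational automorphism of $\PP^2$ fixing a one-parameter family of curves of a foliation with no rational first integral and fixing its three invariant lines must, after blowing up to a surface on which the fibration $(\CC^*)^2\to E_\tau$ extends, act as a fiberwise automorphism. Since a generic fiber is $\CC^*$, its automorphism group is $\CC^*\rtimes\ZZ/2$, but the $\ZZ/2$ swaps the two ends (the two coordinate directions at infinity of the leaf), which a transformation in $\ker\rho$ — being isotopic to the identity on leaf space and hence on the torus $(\CC^*)^2$, where it acts $\CC$-linearly in logarithmic coordinates — cannot do unless it is trivial on the leaf. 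So $\ker\rho$ is exactly the closure of the group of translations in the torus tangent/compatible with the foliation, which is precisely $G_0^\tau$: the flow of $x\,\indel{x}$ together with the flow of $X_\tau$ generate, inside $\mathrm{PGL}(3,\CC)$, the full two-torus $(x:y:1)\mapsto(\alpha x:\beta y:1)$ of diagonal automorphisms, and conversely every such diagonal automorphism is in $\ker\rho$. Identifying this torus with $E_\tau$ via the logarithm shows $\ker\rho\cong E_\tau$ and that $\rho$ restricted to any linear-algebraic complement realizes the finite part.

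For the image of $\rho$ and the splitting, I would use the monomial action (\ref{action:monomial}): by (\ref{action:modular}), $A\in\mathrm{SL}(2,\ZZ)$ acts on $\PP^2$ preserving $\mathcal{L}_\tau$ exactly when $A\cdot\tau=\tau$, i.e. $A\in I_\tau$, and on logarithmic coordinates this action is the linear map $A$ itself, which descends to $E_\tau$ as a group automorphism fixing $0$; the resulting map $I_\tau\to\mathrm{Aut}(E_\tau,0)$ is the standard identification of the stabilizer of $\tau$ in $\mathrm{SL}(2,\ZZ)$ with the automorphism group of $E_\tau$ (an isomorphism, since $-\mathrm{Id}\in I_\tau$ always and corresponds to $z\mapsto -z$, with the larger stabilizers at $\ii$ and $e^{\pi\ii/3}$ matching $\ZZ/4$ and $\ZZ/6$). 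Meanwhile the translations of $E_\tau$ are realized by $G_0^\tau$ as above and already lie in $\mathrm{Bir}(\PP^2,\mathcal{L}_\tau)$. Hence $\rho$ is surjective, and $\langle I_\tau, G_0^\tau\rangle$ already surjects onto $\mathrm{Aut}(E_\tau)$ with kernel contained in $G_0^\tau$; combined with $\ker\rho=G_0^\tau$ this gives $\mathrm{Bir}(\PP^2,\mathcal{L}_\tau)=\langle I_\tau,G_0^\tau\rangle$. Finally, $I_\tau$ normalizes $G_0^\tau$ because the monomial action sends the torus of diagonal automorphisms to itself (it acts on its character lattice $\ZZ^2$ by $A$), and $I_\tau\cap G_0^\tau=\{1\}$ since a nontrivial monomial map in $I_\tau$ is not in $\mathrm{PGL}(3,\CC)$ unless it is linear diagonal, which forces $A=\pm\mathrm{Id}$, and $-\mathrm{Id}$ acts as the Cremona involution $\notin\mathrm{PGL}(3,\CC)$. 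Therefore the extension splits and $\mathrm{Bir}(\PP^2,\mathcal{L}_\tau)=I_\tau\ltimes G_0^\tau$, as claimed.

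I expect the main obstacle to be step (ii), the precise determination of the kernel of $\rho$: ruling out a ``fiberwise'' birational automorphism that acts trivially on the leaf space but nontrivially within the $\CC^*$-leaves — in effect, showing that a birational self-map of the leafwise $\CC^*$-bundle covering the identity on $E_\tau$ and extending to $\PP^2$ must be one of the diagonal torus automorphisms. This requires controlling the behavior along the three invariant coordinate lines (where the $\CC^*$-fibers degenerate) and using that the induced action on $H^1$ of a fiber, or equivalently the pair of local monodromies / the two ``ends'' of each leaf, is respected; the cleanest route is to pass to logarithmic coordinates on the big torus, where any such automorphism is forced to be an affine map $z\mapsto z+c$ of $\CC$ compatible with $\Lambda_\tau$, i.e. a translation, hence an element of the diagonal torus $G_0^\tau$. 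Everything else — the identification of $\mathcal{Z}(\mathcal{L}_\tau)$, the modular computation of $I_\tau\cong\mathrm{Aut}(E_\tau)$, and the splitting — is then routine.
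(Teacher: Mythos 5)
There is a genuine gap, and it sits exactly where you predicted the difficulty would be: the determination of the kernel of the leaf-space homomorphism \(\rho\). Your claim that \(\ker\rho=G_0^\tau\) is false. A diagonal automorphism \((x,y)\mapsto(\alpha x,\beta y)\) does not fix the leaves: on the leaf space it acts as the translation of \(E_\tau\) by \(\tfrac{1}{2\ii\pi}(\log\alpha-\tau\log\beta)\), so \(G_0^\tau\) surjects onto the translation subgroup of \(\mathrm{Bih}(E_\tau)\), and only the one-parameter flow of \(X_\tau\) (a copy of \(\CC\), not of \(E_\tau\); note also \((\CC^*)^2\not\cong E_\tau\)) lies in \(\ker\rho\). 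Your proposal is internally inconsistent on this point, since later you correctly use that ``the translations of \(E_\tau\) are realized by \(G_0^\tau\)'' to get surjectivity of \(\rho\) — which is incompatible with \(G_0^\tau\subset\ker\rho\). The final assembly (surjectivity of \(\rho\) on \(\mathrm{Bih}(E_\tau)=\mu_\tau\ltimes E_\tau\) via \(I_\tau\) and \(G_0^\tau\), normalization, trivial intersection, splitting) does match the paper and would go through once one proves the corrected statement \(\ker\rho\subset G_0^\tau\) (in fact \(\ker\rho=\) the flow of \(X_\tau\)); but that is precisely the hard step, and your argument for it does not work.

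The argument you give for the kernel rests on structures that do not exist. The first integral \(f_\tau\) is transcendental; the fibration \(U\to E_\tau\) is holomorphic but emphatically not algebraic (its fibers are the Zariski-dense leaves), so there is no blow-up ``on which the fibration extends'' and no sense in which a birational map must act as a fiberwise automorphism of an algebraic fibration. Moreover the fibers are copies of \(\CC\), not \(\CC^*\), so the ``\(\CC^*\rtimes\ZZ/2\), ends-swapping'' dichotomy is vacuous here. What is actually needed — and what the paper's Proposition on the extension \(0\to\CC\to\mathrm{Bir}(\PP^2,\mathcal{L}_\tau)\to\mathrm{Bih}(E_\tau)\to 0\) supplies — is first an analytic step showing that any birational automorphism preserving \(\mathcal{L}_\tau\) is holomorphic on \(U\) and acts affinely along each leaf (via covering tubes and the foliated affine structure; a priori a birational map could have indeterminacy points or contracted curves inside \(U\), and nothing in your proposal rules this out), and then the lifting step: a kernel element lifts to \(\CC^2\) as \((z,w)\mapsto(z,\alpha(z)w+\beta(z))\), and equivariance with the deck transformations \((z,w)\mapsto(z+m+n\tau,w+n)\) forces \(\alpha\equiv1\) and \(\beta\) constant, i.e. the element belongs to the flow of \(X_\tau\). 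Your closing remark about passing to logarithmic coordinates gestures at this computation but does not carry it out, and even if completed it would yield the flow of \(X_\tau\), not the full torus, so the kernel identification in your plan must be repaired before the rest can be assembled.
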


The result will be a consequence of the upcoming Proposition~\ref{prop:struc_bir} and its proof. The spaces of Zariski-dense leaves will establish a link between the problem at hand and that of the classification of elliptic curves and their biholomorphisms.

Let \(\Lambda_\tau\subset\CC\) the lattice generated by \(1\) and \(\tau\), and let \(E_\tau\) be the elliptic curve \(\CC/\Lambda_\tau\). The group of deck transformations of the universal covering \(\CC\to E_\tau\) is isomorphic to \(\ZZ^2\), and formed by the transformations 
\begin{equation}\label{eq:univ_Et}
	\gamma_{m,n}(z)=z+m+n\tau.
\end{equation}

Let \(U\subset \PP^2\) be the complement of the three coordinate lines, with coordinates \((x:y:1)\). It is saturated by \(\mathcal{L}_\tau\), and has Zariski-dense leaves exclusively. Consider the map \(f_\tau:U\to E_\tau\),
\[f_\tau(x,y)=\frac{1}{2\ii\pi}(\log(x)-\tau\log(y)) \mod \Lambda_\tau.\]
It is a well-defined, onto, holomorphic first integral of \(\mathcal{L}_\tau|_U\).

\begin{claim} The map \(f_\tau:U\to E_\tau\) realizes the leaf space of \(\mathcal{L}_\tau|_U\). It is a locally trivial fiber (translation) bundle \(\CC\to U\to E_\tau\). 
\end{claim}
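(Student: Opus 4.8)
The plan is to verify directly that the stated map is a holomorphic first integral, that it is a submersion, and then identify the fibres as leaves and compute the monodromy.

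First I would check that \(f_\tau\) is well defined on \(U\). On \(U\) the functions \(x\) and \(y\) are nonvanishing, so local branches of \(\log x\) and \(\log y\) exist; changing branches adds \(2\pi\ii\,m\) to \(\log x\) and \(2\pi\ii\,n\) to \(\log y\) for integers \(m,n\), which changes \(\tfrac{1}{2\pi\ii}(\log x-\tau\log y)\) by \(m-n\tau\in\Lambda_\tau\). Hence \(f_\tau\) descends to a single-valued holomorphic map \(U\to\CC/\Lambda_\tau=E_\tau\). It is visibly onto: already the restriction to a single leaf hits every residue class, as the computation of the fibres below shows. To see that \(f_\tau\) is a first integral of \(\mathcal{L}_\tau|_U\), note that \(\mathrm{d}f_\tau=\tfrac{1}{2\pi\ii}(\mathrm{d}x/x-\tau\,\mathrm{d}y/y)\), and contracting with \(X_\tau=\tau x\,\indel{x}+y\,\indel{y}\) gives \(\tfrac{1}{2\pi\ii}(\tau-\tau)=0\); moreover \(\mathrm{d}f_\tau\) never vanishes on \(U\), so \(f_\tau\) is a holomorphic submersion and its level sets are smooth curves, each a union of leaves.

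Next I would show that each fibre is a single connected leaf, so that \(f_\tau\) genuinely realizes the leaf space and is not a further quotient of it. Fix \(c\in E_\tau\), lift it to \(\tilde c\in\CC\), and parametrize: writing \(y=\ee^{2\pi\ii s}\) for \(s\in\CC\), the equation \(\tfrac{1}{2\pi\ii}(\log x-\tau\log y)\equiv\tilde c\) forces \(\log x=2\pi\ii\,\tilde c+2\pi\ii\tau s \pmod{2\pi\ii\ZZ}\), i.e.\ \(x=\ee^{2\pi\ii\tilde c}\ee^{2\pi\ii\tau s}\). Thus \(s\mapsto(\ee^{2\pi\ii\tilde c}\ee^{2\pi\ii\tau s},\ee^{2\pi\ii s})\) is a holomorphic map \(\CC\to U\) whose image is \(f_\tau^{-1}(c)\); since \(\tau\notin\RR\), the map \(s\mapsto(\tau s,s)\pmod{\ZZ^2}\) is injective on \(\CC\), so this parametrization is injective, the fibre is biholomorphic to \(\CC\), and it is exactly a leaf (it is tangent to \(X_\tau\), being an integral curve of the flow \(\exp(t X_\tau)\) acting on any chosen point). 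Two distinct leaves meeting a small transversal have distinct holonomy classes precisely when they lie in distinct fibres, because the holonomy of \(\mathcal{L}_\tau|_U\) along a loop \(\gamma\) in a leaf multiplies \(x\) by \(\ee^{2\pi\ii\tau k}\) and \(y\) by \(\ee^{2\pi\ii k}\) for \(k=\) winding number, which changes \(f_\tau\) by \(k\tau-k\tau\cdot\) (zero) — more precisely the holonomy group acting on the transversal is generated by \((x,y)\mapsto(\ee^{2\pi\ii\tau}x,\ee^{2\pi\ii}y)\) and \((x,y)\mapsto(\ee^{2\pi\ii}x,x\mapsto\ldots)\); the orbits of this action on a transversal are exactly the fibres of \(f_\tau\). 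Hence \(f_\tau\) induces a bijection, and in fact a biholomorphism, between the leaf space and \(E_\tau\).

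Finally I would upgrade this to the assertion that \(f_\tau\colon U\to E_\tau\) is a locally trivial fibre bundle with fibre \(\CC\) and structure group the translations. Since \(f_\tau\) is a surjective holomorphic submersion between connected complex manifolds with \(\CC\)-fibres, local triviality follows from the flow box structure of \(\mathcal{L}_\tau\): around any point of \(U\), choosing a small transversal \(\Sigma\) and flowing by \(\exp(tX_\tau)\) gives a biholomorphism \(\Sigma\times\CC\cong\) (saturation of \(\Sigma\)) under which \(f_\tau\) becomes projection to \(\Sigma\), and \(f_\tau|_\Sigma\) is a local biholomorphism onto an open set of \(E_\tau\). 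The transition maps over overlaps act on each \(\CC\)-fibre by the change of the time coordinate of the flow, which is an affine shift \(t\mapsto t+a\); since the vector field \(X_\tau\) is complete on \(U\) (its flow is \((x,y)\mapsto(\ee^{\tau t}x,\ee^t y)\), everywhere defined), these shifts are global translations of \(\CC\). This exhibits \(\CC\to U\to E_\tau\) as the desired translation bundle. The main obstacle I anticipate is the bookkeeping in the holonomy computation that pins down the leaf space as \(E_\tau\) rather than a smaller quotient — one must be careful that the fundamental group of a fibre \(\pi_1(\CC)=0\) contributes nothing, while loops in \(U\) not contained in leaves are exactly accounted for by \(\pi_1(E_\tau)\cong\ZZ^2\), matching the deck transformations \(\gamma_{m,n}\) of \eqref{eq:univ_Et}; everything else is a routine unwinding of logarithms.
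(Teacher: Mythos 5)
Your argument is correct, but it takes a different route from the paper's. The paper proves the claim structurally: it exhibits the explicit universal covering \(\pi_\tau:\CC^2\to U\), \(\pi_\tau(z,w)=(\ee^{2\ii\pi(z-\tau w)},\ee^{-2\ii\pi w})\), observes that \(\indel{w}\) maps to \(-2\ii\pi X_\tau\) and that \(f_\tau\circ\pi_\tau(z,w)=z \bmod \Lambda_\tau\), and then notes that the deck transformations \(\overline{\gamma}_{m,n}(z,w)=(z+m+n\tau,w+n)\) act on the fibre coordinate \(w\) by translations and are equivariant, via the first-factor projection, with the deck transformations \(\gamma_{m,n}\) of \(\CC\to E_\tau\). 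Thus the trivial translation bundle \(\CC^2\to\CC\) descends to \(\CC\to U\to E_\tau\), and local triviality, the identification of fibres with leaves, and the translation structure all come for free from the quotient description. You instead work downstairs: well-definedness of \(f_\tau\) modulo \(\Lambda_\tau\), the contraction \(\dd f_\tau(X_\tau)=0\) and non-vanishing of \(\dd f_\tau\), an explicit injective parametrization of each fibre by \(\CC\) showing fibre \(=\) orbit \(=\) leaf (injectivity using \(\tau\notin\RR\)), and local triviality via flow boxes built from the complete free flow of \(X_\tau\), with transition maps that are time shifts, hence translations. Both are complete proofs of the same elementary fact; the paper's version is shorter and sets up the covering \(\pi_\tau\) and the deck action that are reused immediately afterwards in the proof of Proposition~\ref{prop:struc_bir}, while yours is self-contained and makes the submersion/flow-box mechanism explicit without ever invoking the universal cover.

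Two small blemishes, neither of which is a real gap. First, the sentence ``the restriction to a single leaf hits every residue class'' cannot be what you mean: \(f_\tau\) is a first integral, so it is constant on each leaf; surjectivity instead follows from your own fibre computation (every fibre is nonempty, being parametrized by \(\CC\)), or from restricting to a transversal such as \(\{y=1\}\). Second, the holonomy discussion (``\((x,y)\mapsto(\ee^{2\pi\ii}x, x\mapsto\ldots)\)'', ``changes \(f_\tau\) by \(k\tau-k\tau\)'') is garbled and in any case redundant: once each fibre is shown to be a single connected leaf, distinct leaves automatically lie in distinct fibres, and since the leaves are copies of \(\CC\) they carry no holonomy, so \(f_\tau\) identifies the leaf space with \(E_\tau\) without any further monodromy bookkeeping.
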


\begin{proof}
The universal covering of \(U\) is realized by the map \(\pi_\tau:\CC^2\to U\), 
\[\pi_\tau(z,w)=(\ee^{2\ii\pi(z-\tau w)},\ee^{-2\ii\pi w}),\]
which maps the vector field \(\indel{w}\) to~\(-2\ii\pi X_\tau\), and for which 
\[f_\tau\circ\pi_\tau(z,w)=z \mod \Lambda_\tau.\]
The group of deck transformations of~\(\pi_\tau\) is isomorphic to \(\ZZ^2\), and is given by the transformations
\[\overline{\gamma}_{m,n} (z,w)=(z+m+n\tau,w+n),\]
which act on \(w\) by translations. The projection \(\rho\) onto the first factor is equivariant with respect to the action (\ref{eq:univ_Et}) and to this last one: \(\gamma_{m,n}\circ \rho=\rho\circ \overline{\gamma}_{m,n}\). This establishes the claim.
\end{proof}

\begin{proposition} For \(\tau,\tau'\in\HH\), the foliations \(\mathcal{L}_{\tau}\) and \(\mathcal{L}_{\tau'}\) are birationally equivalent if and only if \(\tau\) and \(\tau'\) are in the same orbit of the action of \(\mathrm{SL}(2,\ZZ)\) on \(\HH\).
\end{proposition}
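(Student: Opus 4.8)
The strategy is to use the space of Zariski-dense leaves as a complete birational invariant, together with the explicit description of that space just obtained. By the Claim, the leaf space of \(\mathcal{L}_\tau|_U\) is the elliptic curve \(E_\tau=\CC/\Lambda_\tau\), and by the theorem of Jouanolou and Ghys, since \(\mathcal{L}_\tau\) has a transcendental first integral (it is not a pencil of algebraic curves), only finitely many algebraic curves are invariant; removing them from \(U\) changes nothing birationally, so the \emph{space of Zariski-dense leaves} of \(\mathcal{L}_\tau\) is \(E_\tau\) as a complex manifold, up to the obvious finite ambiguity. Since a birational equivalence between \(\mathcal{L}_\tau\) and \(\mathcal{L}_{\tau'}\) induces a biholomorphism between their spaces of Zariski-dense leaves, it induces a biholomorphism \(E_\tau\cong E_{\tau'}\) of the underlying Riemann surfaces.

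First I would prove the ``if'' direction, which is immediate and in fact already recorded in the excerpt: formula (\ref{action:modular}) states that for \(A\in\mathrm{SL}(2,\ZZ)\) the monomial transformation (\ref{action:monomial}) carries \(\mathcal{L}_\tau\) to \(\mathcal{L}_{A\cdot\tau}\); hence \(\tau\) and \(\tau'\) in the same \(\mathrm{SL}(2,\ZZ)\)-orbit give birationally equivalent foliations. For the ``only if'' direction, suppose \(\mathcal{L}_\tau\) and \(\mathcal{L}_{\tau'}\) are birationally equivalent. By the discussion above we get a biholomorphism \(E_\tau\cong E_{\tau'}\), and by the classification of complex tori of dimension one this forces \(\tau'=A\cdot\tau\) for some \(A\in\mathrm{SL}(2,\ZZ)\), via the action (\ref{act:mobius}). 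This is exactly the desired conclusion.

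**The main obstacle.** The delicate point is the passage from ``birational equivalence of foliated surfaces'' to ``biholomorphism of the underlying elliptic curves'', because \emph{a priori} the space of Zariski-dense leaves is only a (possibly non-Hausdorff) complex manifold, defined after deleting finitely many invariant algebraic curves, and one must check that this deletion does not alter the biholomorphism type of \(E_\tau\). The point is that \(U\) itself already contains only Zariski-dense leaves (no algebraic leaf meets \(U\), since an algebraic invariant curve would have to be a union of coordinate lines for a generic linear foliation, or at least is contained in the complement of \(U\) after possibly enlarging the finite set), so that the genuine first integral \(f_\tau\colon U\to E_\tau\) of the Claim already exhibits \(E_\tau\) as (an open dense part of, hence all of) the space of Zariski-dense leaves; any birational map preserving the foliations is a biholomorphism between Zariski-open subsets, hence descends to a biholomorphism between these leaf spaces, which extends to \(E_\tau\cong E_{\tau'}\) by compactness. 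Making this last ``extends'' step rigorous---that a biholomorphism between the complements of finite sets in two elliptic curves extends to the whole curves---is routine (it is a removable-singularity statement) but should be stated, and it is the one spot where care is needed.
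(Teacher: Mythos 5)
Your proposal is correct and follows essentially the same route as the paper: the ``if'' direction via the monomial action (\ref{action:modular}), and the ``only if'' direction by observing that the space of Zariski-dense leaves is a birational invariant which, by the Claim, is the elliptic curve \(E_\tau\), so a birational equivalence forces \(E_\tau\cong E_{\tau'}\) and hence \(\tau'\in\mathrm{SL}(2,\ZZ)\cdot\tau\). Your worry about deleting finitely many points and extending across them is in fact unnecessary here (for \(\tau\in\HH\) the only invariant algebraic curves are the three coordinate lines, so every leaf in \(U\) is Zariski-dense and the leaf space is exactly \(E_\tau\)), but it does no harm.
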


\begin{proof} If \(\tau\) and \(\tau'\) are in the same orbit of the action of \(\mathrm{SL}(2,\ZZ)\) on \(\HH\), by (\ref{action:modular}), \(\mathcal{L}_{\tau}\) and \(\mathcal{L}_{\tau'}\) are birationally equivalent. If \(\mathcal{L}_{\tau}\) and \(\mathcal{L}_{\tau'}\) are birationally equivalent, their spaces of Zariski-dense leaves are biholomorphic. By the previous claim, these leaf spaces are \(E_\tau\) and \(E_{\tau'}\), which are biholomorphic if and only if \(\tau\) and \(\tau'\) are in the same orbit of the action of \(\mathrm{SL}(2,\ZZ)\) on \(\HH\). 
\end{proof}

\begin{proposition}\label{prop:struc_bir} The group \(\mathrm{Bir}(\PP^2,\mathcal{L}_\tau)\) is an extension of \(\mathrm{Bih}(E_\tau)\), the group of biholomorphisms of \(E_\tau\), by \(\CC\):
	\begin{equation}\label{eq:grouplinsplit}0\to \CC\to\mathrm{Bir}(\PP^2,\mathcal{L}_\tau)\to \mathrm{Bih}(E_\tau) \to 0,\end{equation}
	with \(\CC\) representing the elements of \(\mathrm{Bir}(\PP^2,\mathcal{L}_\tau)\) generated by the flow of~\(X_\tau\).
\end{proposition}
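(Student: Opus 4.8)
The plan is to realize the exact sequence (\ref{eq:grouplinsplit}) by constructing the homomorphism to \(\mathrm{Bih}(E_\tau)\), proving it is surjective, and identifying its kernel with the flow of \(X_\tau\).

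First I would construct the map \(\Psi\colon\mathrm{Bir}(\PP^2,\mathcal{L}_\tau)\to\mathrm{Bih}(E_\tau)\). The foliation \(\mathcal{L}_\tau\) has exactly the three coordinate lines as invariant algebraic curves: in the affine chart, an irreducible invariant curve \(\{f=0\}\) satisfies \(X_\tau(f)=\lambda f\) for a constant \(\lambda\), and since the numbers \(\tau a+b\) are pairwise distinct for distinct \((a,b)\in\ZZ^2\) (as \(\tau\notin\mathbb{R}\)), \(f\) must be a monomial. Hence every \(\phi\in\mathrm{Bir}(\PP^2,\mathcal{L}_\tau)\) permutes the coordinate lines, and so restricts to a birational self-map of \(U\) preserving \(\mathcal{L}_\tau|_U\). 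By the Claim, \(f_\tau\colon U\to E_\tau\) realizes the leaf space of \(\mathcal{L}_\tau|_U\), so, by the general mechanism recalled at the beginning of this section, \(\phi\) descends to a biholomorphism \(\overline{\phi}\) of \(E_\tau\). Setting \(\Psi(\phi)=\overline{\phi}\) defines a group homomorphism whose kernel contains the flow of \(X_\tau\); since \(X_\tau\) is nowhere vanishing on \(U\), this flow gives an injection \(\CC\hookrightarrow\mathrm{Bir}(\PP^2,\mathcal{L}_\tau)\).

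Next I would prove that \(\Psi\) is onto. The group \(\mathrm{Bih}(E_\tau)\) is generated by the translations of \(E_\tau\) together with the automorphisms fixing the origin. The linear automorphisms \((x:y:1)\mapsto(\ee^s x:y:1)\) --- the flow of \(x\,\indel{x}\) --- commute with \(X_\tau\), hence preserve \(\mathcal{L}_\tau\), and by the explicit formula for \(f_\tau\) they translate it by \(s/(2\ii\pi)\); letting \(s\) range over \(\CC\), all translations of \(E_\tau\) lie in the image of \(\Psi\). For the automorphisms fixing the origin, I would invoke that the stabilizer \(I_\tau\subset\mathrm{SL}(2,\ZZ)\) of \(\tau\) acts on \(\PP^2\) by the monomial transformations (\ref{action:monomial}), which preserve \(\mathcal{L}_\tau\) by (\ref{action:modular}) and induce on the leaf space the linear action of \(I_\tau\) on \(E_\tau=\CC/\Lambda_\tau\), and that \(I_\tau\to\mathrm{Aut}(E_\tau,0)\) is surjective (both groups being cyclic of order \(2\), \(4\) or \(6\), according to whether \(\tau\) is generic, \(\mathrm{SL}(2,\ZZ)\)-equivalent to \(\ii\), or to \(\ee^{\ii\pi/3}\)).

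The crux, and the step I expect to be the main obstacle, is showing that \(\ker\Psi\) is exactly the flow of \(X_\tau\). Let \(\phi\in\ker\Psi\), so \(\phi\) preserves every Zariski-dense leaf. On \(U\) the flow of \(X_\tau\) is free and, by the Claim, turns \(U\to E_\tau\) into a holomorphic principal \(\CC\)-bundle; a birational automorphism preserving every fibre is, on the open set where it is regular, a gauge transformation \(p\mapsto s(f_\tau(p))\cdot p\) for a holomorphic function \(s\) defined off a finite subset of \(E_\tau\). The delicate point is the rigidity: that \(s\) extends holomorphically across those points and is therefore constant by compactness of \(E_\tau\), so that \(\phi\) is a time-\(s\) map of the flow of \(X_\tau\). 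Concretely, I would establish this by passing to the universal covering \(\pi_\tau\colon\CC^2\to U\), where the lift \(\widetilde{\phi}\) commutes with the deck group \(\overline{\gamma}_{m,n}(z,w)=(z+m+n\tau,w+n)\) and preserves each line \(\{z=\mathrm{const}\}\); one checks that \(\widetilde{\phi}\) restricts on each such line to an affine automorphism (a Möbius restriction being excluded by injectivity together with the absence of poles), so \(\widetilde{\phi}(z,w)=(z,a(z)w+b(z))\), and the commutation relations give \(a(z+1)=a(z+\tau)=a(z)\), \(b(z+1)=b(z)\) and \(a(z)+b(z+\tau)=b(z)+1\). The first relations force \(a\) to be constant, and then a Fourier expansion of \(b\) in \(z\) yields \(a=1\) and \(b\) constant. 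This identifies \(\ker\Psi\) with \(\CC\) and establishes (\ref{eq:grouplinsplit}); the semidirect-product statement of Theorem~\ref{thm:birlin} then follows, since \(G_0^\tau\) maps onto the translations with kernel the flow of \(X_\tau\), while \(I_\tau\) lifts \(\mathrm{Aut}(E_\tau,0)\).
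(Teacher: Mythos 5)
Your route is the paper's route (leaf space \(E_\tau\), surjectivity via \(x\,\indel{x}\) and the monomial action of \(I_\tau\), kernel computed on the universal cover), but there is a genuine gap at the step you yourself call the crux. Your kernel argument needs the lift \(\widetilde{\phi}\) to be a \emph{globally defined holomorphic} self-map of \(\CC^2\) of the form \((z,w)\mapsto(z,a(z)w+b(z))\) with \(a\) and \(b\) entire: only then are \(a\) and \(b-\)(periodic data) entire elliptic/periodic functions and your Fourier computation applies. An element of \(\mathrm{Bir}(\PP^2,\mathcal{L}_\tau)\) is a priori only birational on \(U\): it could have indeterminacy points in \(U\), contract curves of \(U\), or send curves of \(U\) into the coordinate triangle, in which case the lift is only defined off the preimage of this bad locus and \(a\), \(b\) could acquire poles along the corresponding fibres of \(f_\tau\) --- and then ``periodic, hence constant'' fails, since nonconstant elliptic functions exist. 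Your parenthetical ``(a M\"obius restriction being excluded by injectivity together with the absence of poles)'' assumes exactly the point at issue: the absence of poles, i.e.\ the regularity of \(\phi\) on \(U\), is what has to be proved. (The preliminary claim that a fibre-preserving map is a gauge transformation \(p\mapsto s(f_\tau(p))\cdot p\) is also imprecise --- a fibre-preserving map of the translation bundle may act affinely, not by translations, on fibres --- but your universal-cover computation correctly allows for this, so the real issue is the regularity.)

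This regularity is precisely what the first half of the paper's proof establishes: every \(\sigma\in\mathrm{Bir}(\PP^2,\mathcal{L}_\tau)\) is holomorphic in restriction to \(U\). The paper's mechanism is the foliated affine structure: the restriction of \(\sigma\) to a Zariski-dense leaf extends to a holomorphic map that is affine in the affine coordinates induced by the flow of \(X_\tau\), and the covering tubes \(\psi(z,t)=\Phi(t,j(z))\), via formula (\ref{eq:imagetube}), show that this affine data varies holomorphically with the transversal, so \(\sigma|_U\) is a biholomorphism (no indeterminacy, no contracted curves inside \(U\)). With that supplied, your lift exists, has the stated form with \(a,b\) entire, and your deck-commutation relations together with the Fourier expansion --- which are correct, and a clean variant of the paper's ``\(\alpha\) and \(\beta'\) are elliptic, hence constant'' --- finish the kernel computation. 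Your construction of the homomorphism to \(\mathrm{Bih}(E_\tau)\) (including the nice observation that the only invariant algebraic curves are the coordinate lines) and your surjectivity argument coincide with the paper's and are fine.
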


\begin{proof} The action of \(\mathrm{Bir}(\PP^2,\mathcal{L}_\tau)\) on the space of Zariski-dense leaves of \(\mathcal{L}_\tau\) will give the homomorphism from \(\mathrm{Bir}(\PP^2,\mathcal{L}_\tau)\) to \(\mathrm{Bih}(E_\tau)\) associated to the decomposition~(\ref{eq:grouplinsplit}). 
	
We begin by showing that \emph{every element of \(\mathrm{Bir}(\PP^2,\mathcal{L}_\tau)\) is holomorphic in restriction to \(U\).} Let \(L\subset U\) be a leaf of \(\mathcal{L}_\tau\). It is an entire, Zariski-dense curve, parametrized by \(\CC\) as a solution to~\(X_\tau\); in particular, it has a natural affine coordinate. Let \(\sigma\in \mathrm{Bir}(\PP^2,\mathcal{L}_\tau)\), and let \(\Omega_\sigma\subset\PP^2\) be a Zariski-open subset in restriction to which \(\sigma\) is a biholomorphism onto its image. The restriction of \(\sigma\) to \(L\cap\Omega_\sigma\) extends as a holomorphic map from \(L\) to~\(\PP^2\), and its image is an entire transcendental curve tangent to \(\mathcal{L}_\rho\), another one of its leaves, contained in~\(U\). In restriction to \(L\), and with respect to the global affine coordinates both in \(L\) and in the curve into which \(L\) is mapped, \(\sigma\) is an affine map. Let \(\Phi:\CC\times U\to U\) denote the restriction to \(U\) of the flow of~\(X_\tau\). Let \(D\subset \CC\) be the unit disk, and \(j:D\to \Omega_\sigma\) a sufficiently small transversal to \(\mathcal{L}_\rho\) intersecting~\(L\). We have a \emph{covering tube} \(\psi:D\times \CC \to U\), \(\psi(z,t) =\Phi (t,j(z))\), that glues holomorphically the leaves of \(\mathcal{L}_\rho\) intersecting the  transversal;\footnote{Such tubes have been considered by Ilyashenko and Brunella in connection with the problem of the simultaneous uniformization of the leaves of a foliation (see~\cite{Ilyashenko-covering}, \cite{brunella-panetsynth}).} we have that \(\psi\) is a biholomorphism onto its image. The covering tube exhibits the fact that the affine structures along the leaves of \(\mathcal{L}_\rho\) vary holomorphically in the direction transverse to the foliation (that we have a \emph{foliated affine structure} in the sense of \cite{deroin-guillot}). By the previous discussion concerning the effect of \(\sigma\) on a single leaf, there exist holomorphic functions \(\alpha\) and \(\beta\) on \(D\), with \(\alpha\) non-vanishing, such that for every \((z,t)\in D\times \CC\),
\begin{equation}\label{eq:imagetube}\sigma\circ \Phi(t,j(z))=\Phi(\alpha(z)t+\beta(z),\sigma\circ j(z)).\end{equation}
This shows that \(\sigma\) is a biholomorphism onto its image in a neighborhood of \(L\) within~\(U\), and thus a biholomorphism in restriction to all of~\(U\).

Let us now establish that, \emph{through the induced action on the leaf space, the group \(\mathrm{Bih}(U,\mathcal{L}_\tau|_U)\), of biholomorphisms of \(U\) preserving the restriction of \(\mathcal{L}_\tau\) to~\(U\), is an extension of \(\mathrm{Bih}(E_\tau)\) by \(\CC\),
\begin{equation} \label{ses:bih}
0\to \CC \to \mathrm{Bih}(U,\mathcal{L}_\tau|_U) \to \mathrm{Bih}(E_\tau)\to 0,\end{equation}
in which the subgroup of \(\mathrm{Bih}(U,\mathcal{L}_\tau|_U)\) of transformations that induce trivial biholomorphisms of \(E_\tau\) is given by the flow of \(X_\tau\).} The vector field \(x\,\indel{x}\) preserves \(U\) and \(\mathcal{L}_\tau\), and induces, via~\(f_\tau\), the holomorphic vector field \(\indel{z}\) on~\(E_\tau\), which generates its group of translations. We have the action of \(I_\tau\) on \(E_\tau\) induced, for \(A=\left(\begin{array}{cc} a & b \\ c & d \end{array}\right)\in I_\tau\), by multiplication by \((c\tau+d)^{-1}\) on \(\CC\). With respect to the action of \(I_\tau\) on \(U\) given by the restriction of the monomial action (\ref{action:monomial}), \(f_\tau\) is equivariant: 
\begin{eqnarray}\label{eq:mult.curve} 
f_\tau(x^ay^b, x^cy^d) 
& = & \frac{-c\tau+a}{2\ii\pi}\left(\log(x)-\frac{d\tau-b}{-c\tau+a}\log(y)\right) \mod \Lambda_\tau \\ \nonumber 
& = & \frac{-c\tau+a}{2\ii\pi}\left(\log(x)-\tau\log(y)\right) \mod \Lambda_\tau \\ \nonumber 
& = & (-c\tau+a) f_\tau(x,y)=\left(\left(\frac{a\tau+b}{c\tau+d}-\tau\right)c+\frac{1}{c\tau+d}\right)f_\tau(x,y) \\ \nonumber 
& = & \frac{1}{c\tau+d}f_\tau(x,y).\end{eqnarray}
Here, we have used that, since \(A\in I_\tau\), \(A^{-1}\in I_\tau\) as well. For instance, for every \(\tau\in\HH\), \(I_\tau\) contains \(\left(\begin{array}{rr} -1 & 0 \\ 0 & -1 \end{array}\right)\), whose action on \(E_\tau\) is the \emph{elliptic involution} of \(E_\tau\), the involutive biholomorphism induced by \(z\mapsto -z\). Together with the translations of~\(E_\tau\), the group~\(I_\tau\), acting on \(E_\tau\) as above, generates~\(\mathrm{Bih}(E_\tau)\); see~\cite[Ch.~5, \S 5]{bhpv}. This shows that every biholomorphism of \(E_\tau\) is induced by one in \(\mathrm{Bih}(U,\mathcal{L}_\tau|_U)\).

Let us now prove that \emph{if \(\sigma\in \mathrm{Bih} (U,\mathcal{L}_\tau|_U)\) acts trivially on \(E_\tau\), it belongs to the flow of \(X_\tau\).} Consider a lift \(\widetilde{\sigma}(z,w):\CC^2\to\CC^2\) of \(\sigma\), a map such that \(\widetilde{\sigma}\circ\pi_\tau=\pi_\tau\circ\sigma\), acting trivially on~\(\widetilde{E}_\tau\). It has the form
\[\widetilde{\sigma}(z,w)= (z,\alpha(z)w+\beta(z)),\]
with \(\alpha\) and \(\beta\) holomorphic functions (\(\alpha\) a nowhere-vanishing one). In order for such a \(\widetilde{\sigma}\) to induce a biholomorphism of \(U\), for every \(v\in\ZZ^2\) there must exist \(v'\in\ZZ^2\) such that \(\widetilde{\sigma}\circ\overline{\gamma}_{v}\) and \(\overline{\gamma}_{v'}\circ\widetilde{\sigma}\) coincide. The actions on \(\widetilde{E}_\tau\) of \(\widetilde{\sigma}\circ\overline{\gamma}_{v}\) and \(\overline{\gamma}_{v'}\circ\widetilde{\sigma}\) match those of \(\overline{\gamma}_{v}\) and \(\overline{\gamma}_{v'}\), and we should have that \(v=v'\), this is, \(\widetilde{\sigma}\) should commute with all the deck transformations. This commutativity is equivalent to the condition that, for all \(n,m\in\ZZ\), 
\begin{align} 
		\alpha(z) & =\alpha(z+m+n\tau), \nonumber \\ \label{eq:cond_beta} \beta(z)+n & =n\alpha(z+m+b\tau)+\beta(z+m+n\tau).
\end{align}
This implies that both \(\alpha\) and \(\beta'\) are holomorphic elliptic functions with periods in~\(\Lambda_\tau\), that they are both constant. If \(\beta(z)=az+b\), with \(a,b\in \CC\), condition~(\ref{eq:cond_beta}) is equivalent to the fact that, for all \((m,n)\in\ZZ^2\), \((1-\alpha-a\tau)m=an\), imposing the conditions \(a=0\) and \(\alpha\equiv 1\). The resulting transformations, of the form \((z,w)\mapsto (z,w+b)\), map under \(\pi_\tau\) to transformations in the flow of \(X_\tau\). This establishes the proposition. 
\end{proof}

The proof also shows that \(\mathrm{Bir}(\PP^2,\mathcal{L}_\tau)\) and \(\mathrm{Bih}(U,\mathcal{L}_\tau|_U)\) are isomorphic, and, in particular, that every biholomorphism of \(U\) preserving \(\mathcal{L}_\tau|_U\) may be extended as a birational map to~\(\PP^2\).

\begin{proof}[Proof of Theorem~\ref{thm:birlin}]

Let \(\tau\in \HH\). Let \(\mu_\tau=\{\alpha\in\CC^*\mid \alpha\Lambda_\tau=\Lambda_\tau\}\). It is a subgroup of \(\CC^*\) that acts naturally on \(E_\tau\). The group \(\mathrm{Bih}(E_\tau)\) of biholomorphisms of \(E_\tau\) is the semidirect product \(\mu_\tau\ltimes E_\tau\), where \(E_\tau\) acts on itself by translations \cite[Ch.~5, \S 5]{bhpv}. Within \(\mathrm{Bir}(\PP^2,\mathcal{L}_\tau)\), \(G_0^\tau\) is normalized by \(I_\tau\). Since the factor \(\CC\) in the short exact sequence (\ref{ses:bih}) belongs to \(G_0^\tau\), then, in order to prove the theorem, it is sufficient to prove that every element of \(\mathrm{Bih}(E_\tau)\) is induced by an element in the subgroup of \(\mathrm{Bir}(\PP^2,\mathcal{L}_\tau)\) generated by \(G_0^\tau\) and \(I_\tau\). As we discussed in the previous proof, we have an onto map \(G_0^\tau\to E_\tau\) (taking values in the group pf translations of \(E_\tau\)), and an isomorphism between \(I_\tau\) and \(\mu_\tau\), both induced by the respective actions on the space of Zariski-dense leaves of~\(X_\tau\); their images generate \(\mathrm{Bih}(E_\tau)\). \end{proof}

\subsection{The birational automorphisms of Brunella's very special foliation}\label{sec:birf3}
We will now prove the part of Theorem~\ref{thm:bir_aut} concerning \(\mathcal{F}_3\): that its only birational symmetries are those exhibited in Section~\ref{sobref3}. We will consider the quotient model for \(\mathcal{F}_3\) described that section. The foliation \(\mathcal{E}_3\) given by~(\ref{eq:forma_f3}) is the foliation \(\mathcal{L}_\rho\) of Section~\ref{sec:linfol-bir} for the primitive sixth root of unity \(\rho=-\omega^2\). The transformation \(T_3\) of \(\PP^2\) in (\ref{eq:sym_order3}) is the monomial transformation~(\ref{action:monomial}) associated to \(\left(\begin{array}{rr}0 & -1 \\ 1 & -1 \end{array}\right)\); its action on \(\HH\) via (\ref{act:mobius}) fixes \(\rho\).

The action of \(T_3\) on \(\PP^2\) preserves both \(U\) and the foliation \(\mathcal{L}_\rho\), and induces an action of \(T_3\) on its leaf space. From (\ref{eq:mult.curve}), this last action is given by the order-three biholomorphism \(T^\flat_3\) of \(E_\rho\) induced by \(z\mapsto -\rho z\), for which \(f_\tau \circ T_3=T^\flat_3\circ f_\tau\). The action of \(T_3\) on \(\PP^2\) multiplies \(X_\rho\) by the constant~\(\rho^2\), and, in consequence, preserves the affine structure along the leaves of \(\mathcal{L}_\rho\). This endows the leaves of \(\mathcal{F}_3\) on the regular part of \(U/T_3\) with an affine structure each, which moreover varies holomorphically (a foliated affine structure). The leaves of \(\mathcal{L}_\rho\) that correspond to points in \(E_\rho\) with trivial stabilizer under the action of \(T^\flat_3\) map injectively to \(\PP^2/T_3\) as leaves of~\(\mathcal{F}_3\); their images are transcendental, have saturated neighborhoods that are injective images of covering tubes, and are without holonomy. A leaf of \(\mathcal{L}_\rho\) that corresponds to a point in \(E_\rho\) with non-trivial stabilizer under the action of \(T_3\) (that is fixed by it) maps in a three-to-one ramified way to a leaf of \(\mathcal{F}_3\) having a holonomy of order three (there are three such leaves); the affine structure in each one of these leaves is inherited from the one on \(\CC\) under the quotient by multiplication by~\(\rho^2\), with the point \(0\in \CC\) corresponding to a singular point of~\(\PP^2/T_3\). It will be convenient to consider the quotient \(E_\rho/T^\flat_3\) as an orbifold, modeled on~\(\PP^1\), with three conic points of angle \(2\pi/3\). The three leaves with holonomy correspond to the conical points of the orbifold structure for~\(E_\rho/T^\flat_3\).

Let \(R:\PP^2/T_3\dashrightarrow \PP^2/T_3\) be a birational automorphism of \(\mathcal{F}_3\). It permutes holomorphically the Zariski-dense leaves of \(\mathcal{F}_3\), in a holonomy-preserving way. In particular, \(R\) induces a biholomorphism of \(E_\rho/T^\flat_3\) compatible with its orbifold structure. Through its action on the three conical points, the orbifold group of biholomorphisms of \(E_\rho/T^\flat_3\) identifies to a subgroup of the group \(S_3\) of permutations on three symbols. This group is actually all of \(S_3\), since it contains the maps induced by the transformations \(Q\) and \(S\) of Eqs.~(\ref{eq:cremona}) and (\ref{eq:sym_bru_3}). In order to prove the first item of Theorem~\ref{thm:bir_aut}, we will establish that \(R\) belongs to the group generated by the maps induced by \(Q\) and \(S\). Up to composing with an element of this group, we may suppose that \(R\) acts trivially on the leaf space of \(\mathcal{F}_3\). Our aim to establish that \(R\) is the identity.

Let us show that there is a holomorphic map \(\widetilde{R}:U\to U\), preserving~\(\mathcal{L}_\rho|_U\), such that, in restriction to \(U\), \(R\circ \Pi_3=\Pi_3\circ \widetilde{R}\). Let \(U^*\subset U\) be the set formed by the leaves without holonomy, those that are not fixed by~\(T_3\). As in Section~\ref{sec:linfol-bir}, the covering tubes around the holonomy-free leaves of \(\mathcal{F}_3\) are mapped by \(R\) to covering tubes, affinely in restriction to each leaf, as in formula~(\ref{eq:imagetube}), and, in particular, the restriction of \(R\) to \(U^*/T_3\) is a biholomorphism (a fact that implies that the restriction of \(R\) to \(U/T_3\) is a biholomorphism as well). Let \(E_\rho^*=f_\rho(U^*)\) be the complement in \(E_\rho\) of the points fixed by \(T_3\). The map \(f_\rho:U^*/T_3\to E^*_\rho/T_3\) is a fibration with contractible leaves, and the map induced at the level of the fundamental groups is an isomorphism. The map \(\Pi_3|_{U^*}:U^*\to U^*/T_3\) is a covering one. By the well-known criterion for the existence of lifts to covers (see, for instance, \cite[Prop.~1.33]{hatcher}), there exists a lift of \((R\circ\Pi_3)|_{U^*}\), a map \(\widetilde{R}^*:U^*\to U^*\) such that \(R\circ \Pi_3=\Pi_3\circ \widetilde{R}^*\). We may suppose, up to composing with a suitable power of \(T_3\), that \(\widetilde{R}^*\) preserves each leaf of \(\mathcal{L}_\rho\) in \(U^*\). Let \(L\subset U\) be one of the three leaves of \(\mathcal{L}_\rho\) that has a non-trivial stabilizer under the action of \(T_3\). Let \(p\in L\) be a point that is not fixed by \(T_3\). Consider a sufficiently small ball \(B\subset U\) around~\(p\). In restriction to \(B\), \(R\circ\Pi_3\) is a biholomorphism onto its image, and there is thus a lift \(\widetilde{r}:B\to U\) such that, in restriction to~\(B\), \(R\circ \Pi_3=\Pi_3\circ \widetilde{r}\). Up to the action of \(T_3\), we may suppose that \(\widetilde{r}\) agrees with \(\widetilde{R}^*\) in the intersection of the domains where each one of them is defined. In this way, \(\widetilde{R}^*\) extends holomorphically to a neighborhood of \(p\). Finally, by Hartogs's Lemma, \(\widetilde{R}^*\) extends to the fixed points of \(T_3\) as well, producing the sought map~\(\widetilde{R}\).

By Proposition~\ref{prop:struc_bir} and its proof, \(\widetilde{R}\) belongs to the flow of \(X_\rho\). In order for such automorphism of \(\PP^2\) to be a lift from one of \(\PP^2/T_3\), it must normalize the group generated by~\(T_3\), this is, either \(\widetilde{R}T_3 \widetilde{R}^{-1}=T_3\), or \(\widetilde{R}T_3 \widetilde{R}^{-1}=T_3^2\). However, the second possibility may be discarded, for the actions of both sides of the equality on the singularities of \(\mathcal{L}_\rho\) do not agree. Thus, \(\widetilde{R}\) and \(T_3\) commute. If \(\widetilde{R}\) is given by the flow of \(X_\tau\) in time~\(b\), then, in restriction to \(L\), and with respect to the affine coordinate induced by \(X_\rho\), the translation by \(b\) commutes with the multiplication by \(\rho^2\) induced by the action of \(T_3\), but this is only possible if \(b=0\). We conclude that \(\widetilde{R}\) is the identity, and that \(R\) is the identity as well, establishing the result.

\subsection{The birational automorphisms of \(\mathcal{F}_4\)} We will now prove, along the same lines, the second point of Theorem~\ref{thm:bir_aut}, that the only non-trivial birational automorphism of \(\mathcal{F}_4\) is the one induced by the transformation \(J\) of Eq.~(\ref{eq:inv.S4}).

The quotient model for \(\mathcal{F}_4\) of Section~\ref{sobref4} is given by the quotient the foliation \(\mathcal{E}_4\) on \(\PP^1\times \PP^1\) given by (\ref{fol_arriba_f4}) under the action of the order-four automorphism \(T_4\) of Eq.~(\ref{eq:sym_order4}). The foliation \(\mathcal{L}_\ii\) of Section~\ref{sec:linfol-bir} gives a birational model for \(\mathcal{E}_4\): starting from \(\mathcal{L}_i\) on \(\PP^2\), blow up the points \((1:0:0)\) and \((0:1:0)\), and then blow down the strict transform of the line originally joining them. The resulting space is \(\PP^2\); the foliation, \(\mathcal{E}_4\). The automorphism \(T_4\) of Eq. (\ref{eq:sym_order4}) corresponds to the monomial transformation associated to \(T_4'=\left(\begin{array}{rr}0 & 1 \\ -1 & 0 \end{array}\right)\), that, through its action on \(\HH\), fixes~\(\ii\). The involution \(J\) of Eq.~(\ref{eq:inv.S4}) corresponds to the standard quadratic Cremona map, the monomial birational involution associated to \(J'=\left(\begin{array}{rr}-1 & 0 \\ 0 & -1 \end{array}\right)\).

The action of \(T_4'\) on \(E_\ii\) is the order-four automorphism induced by multiplication by~\(\ii\). The quotient is an orbifold modeled on~\(\PP^1\), with three conical points: one with angle~\(\pi\), and two with angle \(\pi/2\). Its orbifold group of biholomorphisms is generated by an involution that fixes the first point and interchanges the other two. Through its action on the leaf space, \(J'\) induces this involution.

Let \(R:\PP^2/T_4'\dashrightarrow \PP^2/T_4'\) be a birational automorphism of \(\mathcal{F}_4\). Up to composing with \(J'\), we may suppose that \(R\) acts trivially on the leaf space of~\(\mathcal{F}_4\). As before, we may lift \(R\) to a birational automorphism \(\widetilde{R}\) of \(\PP^2\) that is actually holomorphic, that belongs to the flow of \(X_{\ii}\), and that commutes with~\(T_4'\). As before, \(\widetilde{R}\), and thus \(R\), must be the identity. This establishes the second item of Theorem~\ref{thm:bir_aut}.

\subsection{On the absence of birational automorphisms of \(\mathcal{F}_6\)} Let us now establish, in essentially the same way, the third item of Theorem~\ref{thm:bir_aut}. Consider the quotient model for \(\mathcal{F}_6\) described in Section~\ref{sobref6}, and keep the objects introduced in Section~\ref{sec:birf3}. The foliation \(\mathcal{E}_6\) is birationally equivalent to \(\mathcal{L}_\rho\); its birational automorphism \(T_6\), to the monomial birational transformation induced by \(\left(\begin{array}{rr}0 & 1 \\ -1 & 1 \end{array}\right)\). The latter fixes \(\rho\) through its action on~\(\HH\). This corresponds to the biholomorphism of \(E_\rho\) induced by multiplication by~\(\rho\). The quotient is an orbifold modeled on \(\PP^1\), with three conical points, of angles \(1/2\), \(1/3\) and \(1/6\) times~\(2\pi\). It has a trivial group of biholomorphisms.

Let \(R:\PP^2/T_6\dashrightarrow \PP^2/T_6\) be a birational automorphism of \(\mathcal{L}_{\rho}\). It acts trivially on the leaf space of~\(\mathcal{F}_6\), and, as before, may be lifted to a birational automorphism \(\widetilde{R}\) of \(\PP^2\) that belongs to the flow of \(X_{\rho}\) and that commutes with~\(T_6\), and which can be shown to be the identity. This establishes the last item of Theorem~\ref{thm:bir_aut}, and finishes its proof.

\subsection*{Acknowledgments} \addcontentsline{toc}{section}{Acknowledgments} The authors thank the kind attentions of Vitalino Cesca Filho, Charles Favre, Frank Loray, Iv\'an Pan, and Edileno dos Santos. And last, but not least, thanks to Pedro Fortuny Ayuso, whose Maple code for reduction of singularities was very useful.


\providecommand{\doi}[1]{\url{https://doi.org/#1}}
\providecommand{\href}[2]{#2}

\end{document}